\theoremstyle{remark}
\newtheorem{remark}{Remark}[section]
\theoremstyle{definition}
\newtheorem{theorem}{Theorem}[section]
\newtheorem{definition}[theorem]{Definition}
\newtheorem{proposition}[theorem]{Proposition}
\newtheorem{lemma}[theorem]{Lemma}
\newtheorem{corollary}[theorem]{Corollary}
\newtheorem{hypothesis}[theorem]{Hypothesis}
\DeclareMathOperator{\R}{\mathbb{R}}
\newcommand{\C}{C}
\DeclareMathOperator{\tS}{\tilde{\mathbb{S}}}
\DeclareMathOperator{\Se}{\mathbb{S}}
\DeclareMathOperator{\N}{\mathbb{N}}
\DeclareMathOperator{\ra}{\rightarrow}
\newcommand{\de}{\,\text{d}}
\newcommand{\sym}{\text{sym}}
\newcommand{\skw}{\text{skw}}
\newcommand{\loc}{\text{loc}}
\newcommand{\ddd}{\text{d}}
\newcommand{\f}[1]{{\pmb{ #1}}}
\DeclareMathOperator{\di}{\nabla\cdot}
\newcommand{\tu}{\tilde{\f u}}
\newcommand{\ve}{\varepsilon}
\renewcommand{\t}{\partial_t}
\newcommand{\sy}[1]{(\nabla \f #1)_{{\sym}}}
\newcommand{\vv}{\tilde{\f v}}
\newcommand{\syv}{(\nabla \tilde{\f v})_{{\sym}}}
\newcommand{\Xb}{\mathfrak{X}}
\newcommand{\Yb}{\mathfrak{Z}}
\newcommand{\Xbzero}{\mathfrak{X}_0}
\newcommand{\Ybzero}{\mathfrak{Z}_0}
\newcommand{\vgam}{\f v_{\gamma}}
\newcommand{\Sgam}{\Se_{\gamma}}
\newcommand{\dreidots}{\text{\,\multiput(0,-2)(0,2){3}{$\cdot$}}\,\,\,\,}
\newcommand{\e}{\text{\textit{e}}}
\newcommand{\Om}{\Omega}
\newcommand{\OmT}{\Om\times(0,T)}
\newcommand{\pOmT}{\partial\Om\times(0,T)}
\newcommand{\OmTzero}{\Omega\times[0,T)}
\newcommand{\nvec}{\pmb{n}}
\newcommand{\grad}{\nabla}
\DeclareMathOperator{\Div}{\nabla \cdot}
\newcommand{\pt}{\partial_t}
\newcommand{\D}{{\mathrm D}}
\newcommand{\dx}{\,{\mathrm d}x}
\newcommand{\dtau}{\,{\mathrm d}\tau}
\newcommand{\dt}{\,{\mathrm d}t}
\newcommand{\ds}{\,{\mathrm d}s}
\newcommand{\jaumannder}[1]{\overset{\triangledown}{#1}}
\newcommand{\symmgrad}[1]{\np{\grad #1}_{\text{sym}}}
\newcommand{\skewgrad}[1]{\np{\grad #1}_{\text{skw}}}
\newcommand{\ddt}{\frac{\mathrm{d}}{\mathrm{d}t}}
\newcommand{\transpose}{\top}
\newcommand{\idmatrix}{\mathbb{I}}
\DeclareMathOperator{\trace}{Tr}
\newcommand{\zerotens}{\mathbb{O}}
\newcommand{\zerovec}{\pmb{0}}
\newcommand{\vvel}{\pmb{v}}
\newcommand{\tvvel}{{\tilde\vvel}}
\newcommand{\vpres}{p}
\newcommand{\vf}{\pmb{f}}
\newcommand{\Stens}{\mathbb{S}}
\newcommand{\tStens}{\tilde{\mathbb{S}}}
\newcommand{\Ttens}{\mathbb{T}}
\newcommand{\tTtens}{\tilde{\mathbb{T}}}
\newcommand{\potential}{\mathcal{P}}
\newcommand{\norm}[1]{\lVert#1\rVert}
\newcommand{\snorm}[1]{{\lvert #1 \rvert}}
\newcommand{\snormL}[1]{{\Bigl\lvert #1 \Big\rvert}}
\newcommand{\LR}[1]{L^{#1}}
\newcommand{\LRloc}[1]{L^{#1}_{\mathrm{loc}}} 
\newcommand{\CR}[1]{C^{#1}}  
\newcommand{\CRi}{\CR \infty}
\newcommand{\CRci}{\CR \infty_0}
\newcommand{\WSR}[2]{W^{#1,#2}} 
\newcommand{\WSRloc}[2]{W^{#1,#2}_{\mathrm{loc}}} 
\newcommand{\HSR}[1]{H^{#1}} 
\newcommand{\HSRN}[1]{H^{#1}_0} 
\newcommand{\HSRloc}[1]{H^{#1}_{\mathrm{loc}}}
\newcommand{\LRsigma}[1]{L^{#1}_{\sigma}} 
\newcommand{\HSRNsigma}[1]{H^{#1}_{0,\sigma}} 
\newcommand{\CRcisigma}{\CR{\infty}_{0,\sigma}}
\newcommand{\LHT}{\mathrm{LH}_T}
\newcommand{\LRdev}[1]{L^{#1}_{\delta}} 
\newcommand{\HSRdev}[1]{H^{#1}_{\delta}} 
\newcommand{\XT}{X_T}
\newcommand{\YT}{Y_T}
\newcommand{\ZT}{Z_T}
\newcommand{\np}[1]{(#1)}
\newcommand{\bp}[1]{\big(#1\big)}
\newcommand{\bb}[1]{\big[#1\big]}
\newcommand{\Bp}[1]{\bigg(#1\bigg)}
\newcommand{\set}[1]{\ensuremath{\{#1\}}}
\newcommand{\setc}[2]{\ensuremath{\{#1 : #2\}}}
\newcommand{\setcl}[2]{\ensuremath{\bigl\{#1 : #2\bigr\}}}
\newcommand{\setcL}[2]{\ensuremath{\biggl\{#1 : #2\biggr\}}}
\newcommand{\closure}[2]{\overline{#1}^{#2}}
\renewcommand{\restriction}[2]{#1 | _{#2}}
\newcommand{\tin}{\text{in }}
\newcommand{\ton}{\text{on }}
\newcommand{\EEE}{\color{black}}
 \newcommand{\statespace}{Q}
\newcommand{\energy}{\mathcal{E}}
\newcommand{\timeES}{\LRloc{\infty}([0,T);\statespace)}
\newcommand{\cten}{C_{\text{\hypersetup{hidelinks}\ref{eq:150}}}}
\newcommand{\ctv}{C_{\text{\hypersetup{hidelinks}\ref{eq:apri.s}}}}
\newcommand{\PropEnIneq}{Prop.~3.3} 
\newcommand{\ThmMain}{Theorem~3.4}
\author{Thomas Eiter\footnotemark[1] \and Katharina Hopf\footnotemark[1] \and
Robert Lasarzik%
\footnote{Weierstrass Institute for Applied Analysis and Stochastics,
Mohrenstr. 39, 10117 Berlin, Germany,
\newline
Emails: 
\texttt{thomas.eiter@wias-berlin.de}
\newline
\textcolor{white}{Emails:}
\texttt{katharina.hopf@wias-berlin.de}
\newline
\textcolor{white}{Emails:}
\texttt{robert.lasarzik@wias-berlin.de}
}
}
\date{\today}
\title%
{
	Weak-strong uniqueness and energy-variational solutions 
	for a class of 
	viscoelastoplastic fluid models 
}	
\begin{document}
\maketitle
\begin{abstract}
We study a model for a fluid showing viscoelastic and viscoplastic behavior,
which describes the flow in terms of the fluid velocity and an internal stress.
This stress tensor is transported via the Zaremba--Jaumann rate,
and it is subject to two dissipation processes: 
one induced by a nonsmooth convex potential and one by stress diffusion.
We show short-time existence of strong solutions
as well as their uniqueness
in a class of Leray--Hopf type weak solutions satisfying the tensorial component 
in the sense of an evolutionary variational inequality. 
The global-in-time existence of such generalized solutions has been established in a previous work.
We further study the 
limit when stress diffusion vanishes.
In this case, the above notion of generalized solutions is no longer suitable,
and we introduce the concept of energy-variational solutions, 
which is based on an inequality for the relative energy.
We derive general properties of energy-variational solutions 
and show their existence
by passing to the non-diffusive limit
in the relative energy inequality satisfied by generalized solutions
for non-zero stress diffusion.
\end{abstract}

\section{Introduction}

In a three-dimensional bounded domain $\Omega\subset\R^3$,
we consider the flow of an incompressible viscoelastoplastic fluid 
governed by the equations
\begin{subequations}\label{eqvis}
\begin{align}
\t \f v + ( \f v \cdot \nabla ) \f v  - \di \left ( \eta\mathbb{S}+ 2\mu (\nabla \f v )_{\sym} \right ) + \nabla p = \f f,
\quad \di \f v &= 0 
&& \tin\OmT,\label{eqvis1}\\
\t \Se + (\f v \cdot \nabla) \Se + \Se (\nabla \f v )_{\skw} -(\nabla \f v)_{\skw} \Se+ \partial \mathcal{P}(\Se)-\gamma \Delta \Se &\ni \eta(\nabla \f v)_{\sym}  && \tin\OmT,\label{eqvis2}
\\
\vvel=0, \qquad \gamma\,\nvec\cdot\grad\Stens&=0 && \ton \pOmT,
\label{eq:bdrycond}
\\
\vvel(\cdot,0)=\vvel_0,\qquad\Stens(\cdot,0)&=\Stens_0 &&\tin \Omega,
\label{eq:initcond}
\end{align}
\end{subequations}
where $(0,T)$ is a time interval
with $T\in(0,\infty]$.
Equation \eqref{eqvis1}
describes the evolution of the fluid flow 
(with constant density $\rho=1$) 
in terms of the Eulerian velocity $\vvel\colon\OmT\to\R^3$ and pressure $\vpres\colon\OmT\to\R$,
subject to an external forcing $\vf\colon\OmT\to\R^3$.
The fluid stress $\Ttens=\eta\mathbb{S}+ 2\mu (\nabla \f v )_{\sym}-p\idmatrix$
decomposes into the classical term $2\mu (\nabla \f v )_{\sym}-p\idmatrix$ for Newtonian fluids 
and an additional stress $\Stens\colon\OmT\to\R^{3\times3}$.
Here $(\nabla \f v )_{\sym}=\frac{1}{2}\np{\grad\vvel+\grad\vvel^\transpose}$ is the rate-of-strain tensor,
and $\mu>0,\,\eta\ge0$ denote fixed constants.
Since $\vpres$ shall describe the physical pressure, that is, 
the (negative of the) spherical part of the fluid stress,
we assume $\Stens$ to be a symmetric deviatoric tensor field, that is,
$\Stens=\Stens^\transpose$ and $\trace\Stens=0$.
By equation \eqref{eqvis2}, the extra stress $\Stens$
is transported via the Zaremba--Jaumann rate
\begin{equation}\label{eq:jaumannder}
\jaumannder\Stens
\coloneqq
\t \Se + (\f v \cdot \nabla) \Se + \Se (\nabla \f v )_{\skw} -(\nabla \f v)_{\skw} \Se,
\end{equation}
where $(\nabla \f v )_{\skw}=\frac{1}{2}\np{\grad\vvel-\grad\vvel^\transpose}$.
Moreover, $\Stens$ is subject to a diffusion process induced by the term $\gamma\Delta\Stens$ with $\gamma\geq 0$
and an additional nonlinear dissipation due to the subdifferential  $\partial\potential(\Stens)$
of the nonsmooth convex potential $\potential$.
The system is completed 
by no-slip conditions for $\vvel$ and 
homogeneous Neumann conditions for $\Stens$ (in the case $\gamma>0$) on the boundary
as well as initial conditions.

The Zaremba--Jaumann derivative \eqref{eq:jaumannder} describes only one possible choice
for the objective stress rate,
and there exist other objective tensor derivatives
used for the description of stress evolution.
However, as we explain below, the model \eqref{eqvis} is motivated from 
geodynamics, where the Zaremba--Jaumann derivative is a common choice 
(see \cite{MoDuMu02MCMV,GerYue07RCMM,HerrenGeryaVand2017IRSDF,PHGAV2019SAFG}).
Moreover, it is crucial in the mathematical analysis of \eqref{eqvis}
since it (formally) guarantees the identity
\[
\ddt\int_\Omega\frac{1}{2}\snorm{\Stens}^2\dx
=\int_\Omega\pt\Stens:\Stens\dx
=\int_\Omega\jaumannder\Stens:\Stens\dx.
\]
This property can be used to reveal information on the evolution of the total quadratic energy
\[
	\energy(t):=\int_\Omega\Bp{\frac{1}{2}\snorm{\vvel(x,t)}^2
+\frac{1}{2}\snorm{\Stens(x,t)}^2}\dx,
\]
which consists of the kinetic energy associated with $\vvel$ 
and the stored elastic energy associated with $\Stens$.
More precisely, smooth solutions $\np{\vvel,\Stens}$ to \eqref{eqvis}
formally satisfy the energy-dissipation balance
\begin{equation}\label{eq:enbalance}
\energy(t)
+\int_0^t\int_\Omega\bp{\mu\snorm{\symmgrad{\vvel}}^2
+\partial\potential(\Stens):\Stens
+\gamma\snorm{\grad\Stens}^2}\dx\dtau
=\energy(0)
+\int_0^t\langle\f f,\vvel\rangle\dtau
\end{equation}
for all $t\in[0,T)$.
This shows that the total energy
is dissipated by three processes: 
the direct fluid viscosity with parameter $\mu>0$,
the nonsmooth stress-dissipation potential,
and the stress diffusion with parameter $\gamma\geq 0$.

The main non-standard feature of the system \eqref{eqvis} is
the occurrence of the set-valued subdifferential $\partial\potential(\Stens)$ in \eqref{eqvis2}, 
the meaning of which will be specified in the following.
Observe that the energy-dissipation balance \eqref{eq:enbalance} 
suggests to examine \eqref{eqvis} in an $\LR{2}$ framework and we
thus seek stress tensors $\Se=\Se(t)$ taking values in the space
$\LRdev{2}(\Omega)
=\setc{\Stens\in\LR{2}(\Omega)^{3\times3}}
{\Stens=\Stens^\transpose,\ \trace\Stens=0}$.
The dissipation potential $\potential$ is now defined to be a convex and lower semicontinuous function
$\potential\colon\LRdev{2}(\Omega)\to[0,\infty]$ that satisfies $\potential(\zerotens)=0$, where $\zerotens$ denotes the zero tensor.
By definition, the convex subdifferential $\partial\potential$ of $\potential$ is then given by 
\[
\Stens\mapsto\partial\potential(\Stens)
\coloneqq\setcL{\mathbb{G}\in\LRdev{2}(\Omega)}{\potential(\tStens)\geq\potential(\Stens)+\int_\Omega\mathbb{G}:(\tStens-\Stens)\dx \ \text{ for all }\tStens\in\LRdev{2}(\Omega)}.
\]
Such nonsmooth dissipation potentials allow to include plastic effects in the model, and related
viscoelastoplastic fluid models 
are used in geodynamics to describe rock deformation in the lithosphere;
see \cite{MoDuMu02MCMV,GerYue07RCMM},
where 
\begin{equation}
\potential(\Stens)=
\int_\Omega\mathfrak P(\Stens(x))\dx,
\qquad
\mathfrak P(\Stens)=
\begin{cases}
\frac{a}{2}\snorm{\Stens}^2 &\text{if } \snorm{\Stens}\leq\sigma_{\text{yield}},
\\
\infty &\text{if }\snorm{\Stens}>\sigma_{\text{yield}}.
\end{cases}\label{Pexample}
\end{equation}
Here $a>0$ is a constant, 
and the yield stress $\sigma_{\text{yield}}>0$ determines the transition to plastic behavior.
One readily verifies that $\potential$ defined in this way has the above properties.
Further examples for possible choices of $\potential$ can be found in \cite{EiterHopfMielke2021}.

The beginning of the mathematical analysis of viscoelastic fluid models,
also using objective derivatives different from the Zaremba--Jaumann rate \eqref{eq:jaumannder},
can be dated back to the middle 1980s; 
see \cite{JoReSa85HCTF,RenRen86LSPC,ReHrNo87MPV,CooSch91ILFV,Rena00MAVF} for example.
Since all objective derivatives come along with strong nonlinearities,
the first result on global existence of weak solutions was only established several years later 
by Lions and Masmoudi \cite{LM_2000}, 
who studied the system \eqref{eqvis}
for $\gamma=0$ 
and a quadratic dissipation potential such that $\partial\potential(\Stens)=a\Stens$ for some $a\geq0$.
In this case, \eqref{eqvis2} becomes a transport equation,
and existence can be deduced from the propagation of compactness in $L^2$.
This tool is no longer available when $\potential$ is nonlinear and nonsmooth.
For such potentials, large-data global existence can be achieved for diffusive regularizations of the tensorial transport equation, as recently demonstrated in~\cite{BMPS2018PDEA,BaBuMa21LDET,EiterHopfMielke2021}.
The article~\cite{EiterHopfMielke2021} considers problem~\eqref{eqvis} with $\gamma>0$ and proves  global existence of generalized solutions compsed of a weak formulation for~\eqref{eqvis1} and a variational inequality for~\eqref{eqvis2} (cf.~Definition \ref{def:gensol} below).
In the present article, we continue this analysis in two directions.

In a first part, we investigate the case $\gamma>0$, complementing and refining the existence analysis of generalized solutions in~\cite{EiterHopfMielke2021}.
Here, our main results are the short-time existence of strong solutions (see Theorem \ref{thm:strongsol}) as well their uniqueness among  generalized solutions  (see Theorem \ref{thm:weakstrong}).
The presence of the nonsmooth dissipation potential $\potential$ renders the construction of (local) strong solutions a non-trivial and interesting question.
Indeed, some care has to be taken to derive \textit{a priori} estimates compatible with the nonsmoothness of $\potential$, and our construction strongly relies on the fact that $\gamma>0$.
Concerning our uniqueness result, we note that the generalized solutions as considered here comprise the family of Leray--Hopf weak solutions of the Navier--Stokes equations (for $\eta=0$),
and hence the uniqueness of generalized solutions seems to be out of reach. 
The main step in the proof of the weak-strong uniqueness principle mentioned above is 
the derivation of an evolutionary inequality for the relative energy
\[
	\mathcal{R}(\f v , \Se | \vv, \tS) =
	\frac{1}{2} \| \f  v- \vv \|_{L^2(\Omega)}^2
	+ \frac{1}{2} \| \Se - \tS \|_{L^2(\Omega)}^2 \,
\] 
of the form
\begin{equation}
	\begin{aligned}
	\mathcal{R} (\f v(t) , \Se(t)| \vv(t),\tS(t)) 
	+\int_0^t   \bp{\mathcal{W}^{(\mathcal K)}(\f v , \Se| \vv,\tS ) 
	&+\mathcal{F}(\f v , \Se; \vv,\tS ) } \,
	\e^{\int_s^t \mathcal{K}(\vv,\tS) \de \tau} \de s
	\\  
	&
	\leq \mathcal{R} (\f v_0 , \Se_0| \vv(0),\tS(0))  \e^{\int_0^t \mathcal{K}(\vv,\tS) \de s }  \,
	\end{aligned}
	\label{relen.intro}
	\end{equation}
for $t\in(0,T)$,
see \eqref{relen} below,
which allows to compare generalized solutions $\np{\vvel,\Stens}$
with (more regular) test functions $\np{\tvvel,\tStens}$.
Here $\mathcal F$ contains more or less the (nonlinear) differential operator 
associated with problem \eqref{eqvis} 
applied to the test function $\np{\tvvel,\tStens}$,
and $\mathcal W^{(\mathcal K)}$ consists of terms describing the relative dissipation as well as 
terms arising from the nonlinearities in \eqref{eqvis}.
It further depends on the non-negative function $\mathcal K$,
which we call the regularity weight
since it determines the class of admissible test functions $\np{\tvvel,\tStens}$ 
such that \eqref{relen.intro} is meaningful.

 In the second part we study the problem without stress diffusion, that is, when $\gamma=0$.
Our interest in this case stems, among other things, from the original models used in geodynamics \cite{MoDuMu02MCMV,GerYue07RCMM}, where stress diffusion does not appear.
If $\gamma=0$, problem~\eqref{eqvis} reduces to the system
\begin{subequations}\label{eqvis.0}
\begin{align}
\t \f v + ( \f v \cdot \nabla ) \f v - \di \left ( \eta\mathbb{S}+ 2\mu (\nabla \f v )_{\sym} \right ) + \nabla p = \f f,
\quad \di \f v &= 0 
&& \tin\OmT,\label{eqvis1.0}
\\
\t \Se + (\f v \cdot \nabla) \Se + \Se (\nabla \f v )_{\skw} -(\nabla \f v)_{\skw} \Se+ \partial \mathcal{P}(\Se) &\ni \eta(\nabla \f v)_{\sym}  && \tin\OmT,\label{eqvis2.0}
\\
\vvel & =0  && \ton \pOmT,
\label{eq:bdrycond.0}
\\
\vvel(\cdot,0)=\vvel_0,\qquad\Stens(\cdot,0)&=\Stens_0 
&&\tin \Omega.
\label{eq:initcond.0}
\end{align}
\end{subequations}
As mentioned above, 
stress diffusion serves as a regularization making existence results accessible
by means of parabolic theory. 
When $\gamma=0$, the energy estimate \eqref{eq:enbalance} no longer controls the gradient of $\Se$ in $L^2_\loc(\overline\Om\times[0,T))$, and it becomes unclear how to pass to the limit along approximate solutions in the term $\big(\Se (\nabla \f v )_{\skw}{-}(\nabla \f v)_{\skw} \Se\big)$ in~\eqref{eqvis2.0} associated with the Zaremba--Jaumann derivative.
In this article, we provide a framework allowing us to treat the case $\gamma=0$.
It relies on the rather weak notion of energy-variational solutions to \eqref{eqvis.0}
(cf.~Def.~\ref{def:envar} below),
which is based on an inequality for the relative energy $\mathcal{R}(\f v , \Se | \vv, \tS)$,
adapting
the aforementioned relative energy inequality 
\eqref{relen.intro}
for generalized solutions when $\gamma>0$.
Under a convexity assumption, we may pass to the limit $\gamma\to0$
in this inequality using weak lower semicontinuity arguments.
This allows us to construct an energy-variational solution to \eqref{eqvis.0}
as the limit of a sequence of generalized solutions $\np{\vvel_\gamma,\Stens_\gamma}$
to \eqref{eqvis} for $\gamma>0$, see Theorem \ref{thm:envar}.

The idea to base a solution concept on a relative energy estimate goes back to
Lions \cite[Def.~4.1]{Lions_incompressible_1996}, 
who introduced
the notion of dissipative solutions
for the incompressible  Euler equations. 
A dissipative solution does not fulfill the differential equation in a distributional sense, but rather satisfies a relative energy inequality with respect to any sufficiently smooth test function. 
After the seminal work by Lions, this concept has been adopted in other contexts as well,
\textit{e.g.}, 
in the context of viscous electro-magneto-hydrodynamics~\cite{saintraymond}, liquid crystals~\cite{masswertig}, and nematic electrolytes~\cite{nematicelectro}.

Another generalized solution concept, which is often used in the context of fluid dynamics, 
is the notion of so-called measure-valued solutions~\cite{DiPernaMajda}. 
Measure-valued solutions carry more information than dissipative solutions, but this is achieved by increasing the degrees of freedom: In every point in time and space the solution carries an infinite dimensional measure. 
 The expectation of every measure-valued solution fulfills the dissipative formulation~\cite{weakstrongeuler}, which is identified as a desirable quantity in the case of liquid crystals~\cite{diss}.  
 Moreover, the concept of dissipative solution is amenable from a numerical point of view.
  In the case of anisotropic fluids, a structure-preserving finite element scheme was proven to converge to a dissipative solution, but the convergence to a measure-valued solution seems to be out of reach \cite[Rem.~3.7]{nematicelectro}. 
  Especially the high-order regularizations used to prove convergence to a measure-valued solution~\cite{masswertig} are not amenable from a numerical standpoint. 
   Moreover, in contrast to the set of weak solutions, the set of dissipative solutions is convex and weakly-$\ast$ closed in case of convex energy and dissipation potential. 
   This can be used to define selection criteria choosing a certain dissipative solution in order to achieve uniqueness~\cite{maxdiss}, 
   see also Remark~\ref{rem:convex} below.

The article at hand extends 
previous concepts
of dissipative solutions in two directions. 
Firstly, we do not fix the regularity weight $\mathcal K$ in the 
relative energy estimate \eqref{relen.intro}
defining energy-variational solutions.
This allows to consider different classes of test functions
and lets us derive different results for different choices of the regularity weight $\mathcal{K}$. 
For one class, we preserve the weak formulation of the Navier--Stokes-like part in the generalized formulation~(see Prop.~\ref{prop:envarweak}), and for another choice, we can deduce the convexity of the solution set (see Rem.~\ref{rem:convex}). 
As mentioned above, this convexity property 
may be used 
to select a unique physically relevant solution~\cite{envar,maxdiss}. 
Secondly,
we refrain from dropping the term $\mathcal W^{(\mathcal K)}$ in \eqref{relen.intro},
which for suitable $\mathcal K$ is non-negative and usually estimated by $0$.
By choosing $\mathcal K$ such that the relevant terms in $\mathcal W^{(\mathcal K)}$
are convex and lower semicontinuous, 
they can be kept when passing to the limit $\gamma\to0$,
thus making the solution concept more selective (see Remark~\ref{rem:formulation}).

The structure of this article is as follows. 
We first introduce the general notation
and prepare some auxiliary results in Section \ref{sec:preliminaries}.
In Section \ref{sec:gamma>0} we recall the existence theorem for generalized solutions to \eqref{eqvis}
in the case $\gamma>0$ and show the local-in-time existence of strong solutions.
Subsequently, we prove that strong solutions are unique in the class of generalized solutions.
For this purpose, we derive a suitable relative energy inequality.
In Section \ref{sec:ev.sol} we introduce the notion of energy-variational solutions to \eqref{eqvis}
in the case $\gamma=0$,
which is based on a similar relative energy inequality,
and we show their existence by an approximation with generalized solutions for $\gamma>0$.
Subsequently, we derive general properties of energy-variational solutions.
In the appendix we explain how to infer the present notion of generalized solutions
from the slightly different notion introduced in \cite{EiterHopfMielke2021}.

\section{Preliminaries}
\label{sec:preliminaries}
In this section we prepare the notation used throughout this article 
as well as some helpful inequalities.
We further introduce the basic regularity hypotheses on the data assumed throughout this manuscript.

\subsection{Notations}
\label{ssec:notations}

\paragraph{General notations.} 

If $\f a=(\f a_j),\,\f b=(\f b_j)\in\R^3$ 
are two vectors, their inner product and their tensor product 
are denoted by $\f a\cdot \f b=\f a_j \f b_j$ 
and $\f a\otimes \f b$ with $\np{\f a\otimes \f b}_{jk}=\f a_j\f b_k$, respectively.
Here and in what follows,
we tacitly use Einstein's summation convention.
For the inner product of two second-order tensors 
$\mathbb{A}=(\mathbb{A}_{jk}),\,\mathbb{B}=(\mathbb{B}_{jk})\in\R^{3\times3}$,
we write $\mathbb{A}:\mathbb{B}= \mathbb{A}_{jk}\mathbb{B}_{jk}$,
and the inner product of two third-order tensors 
$\mathbb{C}=(\mathbb{C}_{jk\ell}),\,\mathbb{D}=(\mathbb{D}_{jk\ell})\in\R^{3\times3\times3}$
is denoted by $\mathbb{C}\dreidots\mathbb{D}= \mathbb{C}_{jk\ell}\mathbb{D}_{jk\ell}$.
The third-order tensor $\mathbb A\otimes \f a$ is
defined by $(\mathbb{A}\otimes \f a)_{jk\ell}=\mathbb{A}_{jk}\f a_\ell$.
By $\mathbb{A}^\transpose$ and $\trace\mathbb{A}$ we denote the transpose and the trace of $\mathbb{A}$,
and $\R_\delta^{3\times3}\coloneqq\setc{\mathbb{A}\in\R^{3\times3}}{\mathbb{A}^\transpose=\mathbb{A},
\trace\mathbb{A}=0}$ denotes the class of symmetric deviatoric matrices.
Moreover, $\zerovec\in\R^3$ and $\zerotens\in\R^{3\times3}$ 
denote the zero vector and the zero tensor, respectively.

The symbol $\Omega$ always denotes a bounded Lipschitz domain in $\R^3$,
and points in $(x,t)\in\OmT$, $T>0$, consist of a spatial variable $x\in\Omega$ and a
time variable $t\in(0,T)$.
By $\pt u$ and $\partial_j u\coloneqq\partial_{x_j}u$, $j=1,2,3$,
we denote time and spatial partial derivatives of a (sufficiently regular) function $u$.
We further write $\grad$ and $\Delta$ for the gradient and the Laplace operator.
For a vector field $\vvel=(\vvel_1,\vvel_2,\vvel_3)$, 
the symmetric and skew-symmetric parts of $\grad\vvel$
are denoted by
\[
\symmgrad{\vvel}\coloneqq \frac{1}{2}\np{\grad\vvel+\grad\vvel^\transpose}, 
\qquad
\skewgrad{\vvel}\coloneqq\frac{1}{2}\np{\grad\vvel-\grad\vvel^\transpose}.
\]
Moreover, $\Div\vvel=\partial_j\vvel_j$ denotes the divergence of $\vvel$,
and we set
$\vvel\cdot\grad u\coloneqq \np{\vvel\cdot\grad} u\coloneqq \vvel_j\partial_j u$.
The divergence $\Div\Stens$ of a tensor field $\Stens=(\Stens_{jk})$
is defined by 
$(\Div\Stens)_j=\partial_k \Stens_{jk}$.

We let $\norm{\cdot}_{X}$ denote the norm of a Banach space $X$.
When the dimension is clear from the context, we do not distinguish between $X$ and its 
$n$-fold Cartesian product $X^n$.
We further write $X^\ast$ for the dual space of $X$,
and $\langle \varphi,x\rangle$ 
denotes the duality pairing 
of $\varphi\in X^\ast$ and $x\in X$. 
When $X$ is a Hilbert space, 
we sometimes write $(x,y)$ for the inner product of two elements $x,y\in X$.

\paragraph{Function spaces.}

By $\CRi(\Omega)$ we denote the class of smooth functions in $\Omega$,
and $\CRci(\Omega)$ consists of all elements in $\CRi(\Omega)$ with compact support in $\Omega$.
Lebesgue and Sobolev spaces are denoted by $\LR{q}(\Omega)$ and $\WSR{k}{q}(\Omega)$
for $q\in[1,\infty]$ and $k\in\N$,
and we set $\HSR{k}(\Omega)\coloneqq\WSR{k}{2}(\Omega)$.
Moreover, $\HSRN{1}(\Omega)$ consists of all functions in $\HSR{1}(\Omega)$
with vanishing boundary trace,
and $\HSR{-1}(\Omega)\coloneqq(\HSRN{1}(\Omega))^\ast$ is the associated dual space
with respect to the distributional duality pairing.

For an interval $I\subset\R$,
the class of continuous $X$-valued functions is denoted by $\CR{0}(I;X)$.  
For the associated Bochner--Lebesgue spaces we write $\LR{q}(I;X)$,
and we define $\WSR{1}{q}(I;X)\coloneqq\setc{u\in\LR{q}(I;X)}{\pt u\in\LR{q}(I;X)}$.
As above, we set $\HSR{1}(I;X)\coloneqq\WSR{1}{2}(I;X)$,
and the classes $\LRloc{q}(I;X)$ and $\HSRloc{1}(I;X)$ 
contain all functions that,
when restricted to any compact subinterval $J\subset I$,
belong to $\LR{q}(J;X)$ or $\HSR{1}(J;X)$, respectively. 
If $I=(0,T)$, we simply write 
$\CR{0}\np{0,T;X}=\CR{0}\np{I;X}$ and $\LR{q}\np{0,T;X}=\LR{q}\np{I;X}$. 
Moreover, for functions $u$ on $\Omega \times I$,
we sometimes abbreviate $u(t) \coloneqq u(\,\cdot\,,t)$ for $t\in I$.

We let
$\CRcisigma\np{\Omega}
\coloneqq\setcl{\varphi\in\CRci(\Omega)^3}{\Div\varphi=0}$
denote the class of smooth solenoidal vector fields,
and the associated Lebesgue and Sobolev spaces are given by
\[
\begin{aligned}
\LRsigma{2}(\Omega)
&\coloneqq\setcl{\vvel\in\LR{2}(\Omega)^3}{\Div\vvel=0, \ 
\restriction{\vvel}{\partial\Omega}\cdot\nvec=0}
=\closure{\CRcisigma\np{\Omega}}{\norm{\cdot}_{L^2}},
\\
\HSRNsigma{1}(\Omega)
&\coloneqq\setcl{\vvel\in\HSRN{1}(\Omega)^3}{\Div\vvel=0}
=\closure{\CRcisigma\np{\Omega}}{\norm{\cdot}_{\HSR{1}}},
\end{aligned}
\]
where the conditions $\Div\vvel=0$ and $\restriction{\vvel}{\partial\Omega}\cdot\nvec=0$
have to be understood in a weak sense;
see \cite[Theorem III.2.3]{Galdi2011_SteadyStateNSE} for example.
If $I\subset\R$ is an interval, we further set
$\CRcisigma\np{\Omega \times I}
\coloneqq\setcl{\Phi\in\CRci(\Omega\times I)^3}{\Div\Phi=0}$.
Moreover, we introduce the spaces of 
symmetric deviatoric fields
\[
\begin{aligned}
\LRdev{2}(\Omega)
&\coloneqq\setcl{\Stens\in\LR{2}(\Omega)^{3\times3}}
{\Stens=\Stens^\transpose,\ \trace\Stens=0},
\\
\HSRdev{1}(\Omega)
&\coloneqq\setcl{\Stens\in\HSR{1}(\Omega)^{3\times3}}
{\Stens=\Stens^\transpose,\ \trace\Stens=0}.
\end{aligned}
\]
In view of the (formal) energy dissipation law~\eqref{eq:enbalance}, 
for $\gamma>0$ we seek solutions~$\np{\vvel,\Stens}$ of~\eqref{eqvis} in the natural energy space 
$\LHT\times\XT$ where
\[
\begin{aligned}
\LHT
&\coloneqq
\LRloc{\infty}\np{[0,T);\LRsigma{2}(\Omega)}
\cap\LRloc{2}\np{[0,T);\HSRN{1}(\Omega)^3},
\\
\XT
&\coloneqq
\LRloc{\infty}\np{[0,T);\LRdev{2}(\Omega)}
\cap\LRloc{2}\np{[0,T);\HSRdev{1}(\Omega)^{3\times3}},
\end{aligned}
\] 
Usually, the time $T\in(0,\infty]$ will be fixed and we simply write
\begin{equation}\label{eq:weaksolspace}
\Xb\coloneqq\LHT\times\XT
\end{equation}
for the solution space.
We further need function spaces obeying additionally a Serrin-type regularity criterion,
which are defined by
\[
\begin{aligned}
	\YT^{s}
	&\coloneqq
	\HSRloc{1}([0,T);(\HSRNsigma{1}(\Omega))^\ast)
	\cap \LRloc{2}([0,T);\HSRNsigma{1}(\Omega))\cap 
	\LRloc{s}([0,T);\LR{r}(\Om)^{3}),
	\\
	\ZT^{q}
	&\coloneqq
	\HSRloc{1}([0,T);(\HSR{1}_\delta(\Omega))^\ast)
	\cap \LRloc{2}([0,T);\HSRdev{1}(\Omega))\cap 
	\LRloc{q}([0,T);\LR{p}(\Om)^{3\times3})
\end{aligned}
\] 
for $p,q,r,s\in(1,\infty)$ satisfying
\begin{equation}
	\frac{2}{s}+\frac{3}{r} =  1 \quad \text{ as well as }\quad \frac{2}{q}+\frac{3}{p} =  1 \label{serrin}\,,
\end{equation}
and we introduce the space
\begin{equation}\label{eq:testfctspace}
\Yb\coloneqq\bigcup_{s,q\in(2,\infty)}\YT^{s}\times\ZT^{q}.
\end{equation}

\paragraph{Functionals.}
Let $X$ be a Banach space and $\potential\colon X\to[0,\infty]$ be convex and proper (that is, $\potential\not\equiv\infty$) .
Then $\partial\potential$ denotes the subdifferential of $\potential$
defined by
\[
\partial\potential(u)\coloneqq
\setcl{\varphi\in X^\ast}{\potential(v)\geq\potential(u)+\langle\varphi,v-u\rangle\text{ for all }v\in X}
\]
for $u\in X$. 
When $X$ is a Hilbert space, then we can identify $X^\ast$ 
with $X$ such that $\partial\potential(u)\subset X$.
Moreover, the convex conjugate of $\potential$ is denoted by $\potential^\ast\colon X^\ast\to[0,\infty]$.

On the state space $\statespace:= \LRsigma{2}(\Om)\times \LRdev{2}(\Om)$
we define the energy functional 
$\energy\colon\statespace\to [0,\infty)$ by 
\begin{equation}\label{eq:energy.def}
	\energy(\vvel,\Se):=\frac{1}{2} \| \f  v\|_{L^2(\Omega)}^2
	+ \frac{1}{2} \| \Se \|_{L^2(\Omega)}^2,
\end{equation}
and we let $\mathcal{R} \colon\statespace\times \statespace\ra [0,\infty) $ denote the associated relative energy functional
\begin{equation}\label{eq:relen.def}
	\mathcal{R}(\f v , \Se | \vv, \tS) =
	\frac{1}{2} \| \f  v- \vv \|_{L^2(\Omega)}^2
	+ \frac{1}{2} \| \Se - \tS \|_{L^2(\Omega)}^2 \,.
\end{equation}
Furthermore, given a functional  
$\mathcal{K}\colon\statespace\to [0,\infty] $,
we let
\begin{equation}\label{eq:DK.def}
	D_\mathcal{K}:=\left\{(\vvel,\Se)\in \timeES : \mathcal{K}(\vvel,\Se)\in \LRloc{1}([0,T))\right\}.
\end{equation}

\subsection{Technical inequalities}

\begin{lemma}\label{lem:ineq}
	Let $a\in H^1(\Omega) $, $b\in  H^1(\Omega)$ and $c \in L^r(\Omega)$, 
	$r\in(3,\infty)$.
	Then for every $\delta>0$, 
	there exists a constant $C_\delta>0$ such that 
	\[
	 \int_\Omega |a | |\nabla b | |  c |\dx \leq \delta \bp{\|  a \| ^2_{\HSR{1}(\Omega)}  + \| \nabla b \| ^2 _{\LR{2}(\Omega)} }+ C _\delta \| c \|_{\LR{r}(\Omega)}^s \| a \| _{\LR{2}(\Omega)} ^2  \,
	\]
	for $s\in(2,\infty)$ defined by $2/s+3/r=1$.
	If $a\in\HSRN{1}(\Omega)$, we even have
	\[
	 \int_\Omega |a | |\nabla b | |  c |\dx \leq \delta \bp{\| \nabla a \| ^2_{\LR{2}(\Omega)}  + \| \nabla b \| ^2 _{\LR{2}(\Omega)}} + C _\delta \| c \|_{\LR{r}(\Omega)}^s \| a \| _{\LR{2}(\Omega)} ^2  .
	\]
\end{lemma}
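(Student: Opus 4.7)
The plan is to establish the inequality by a chain \emph{Hölder $\to$ Sobolev interpolation $\to$ Young}, after which the exponent $s$ with $2/s+3/r=1$ appears naturally.

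First, I would apply Hölder's inequality with exponents $(r',2,r)$, where $r'$ is chosen by $\tfrac{1}{r'}=\tfrac{1}{2}-\tfrac{1}{r}$, i.e.\ $r'=\tfrac{2r}{r-2}$. Since $r\in(3,\infty)$, we have $r'\in(2,6)$, so we obtain
\[
\int_\Omega |a||\nabla b||c|\dx
\;\le\;\|a\|_{\LR{r'}(\Omega)}\,\|\nabla b\|_{\LR{2}(\Omega)}\,\|c\|_{\LR{r}(\Omega)}.
\]

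Next, I would interpolate $\LR{r'}$ between $\LR{2}$ and $\LR{6}$: writing $\tfrac{1}{r'}=\tfrac{1-\theta}{2}+\tfrac{\theta}{6}$ gives $\theta=3/r$, hence
\[
\|a\|_{\LR{r'}(\Omega)}\;\le\;\|a\|_{\LR{2}(\Omega)}^{1-3/r}\,\|a\|_{\LR{6}(\Omega)}^{3/r}
\;\le\;C\,\|a\|_{\LR{2}(\Omega)}^{1-3/r}\,\|a\|_{\HSR{1}(\Omega)}^{3/r},
\]
using the three-dimensional Sobolev embedding $\HSR{1}(\Omega)\hookrightarrow\LR{6}(\Omega)$ on the bounded Lipschitz domain $\Omega$.

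For the final step I would apply Young's inequality in three factors with exponents $(p,q,t)=(\tfrac{2r}{3},\,2,\,\tfrac{2r}{r-3})$, chosen so that $\tfrac{1}{p}+\tfrac{1}{q}+\tfrac{1}{t}=1$ and so that $\|a\|_{\HSR{1}}^{3/r}$ gets raised to the power $2$, $\|\nabla b\|_{\LR{2}}$ likewise, and the remaining factor becomes
\[
\bp{\|a\|_{\LR{2}(\Omega)}^{1-3/r}\|c\|_{\LR{r}(\Omega)}}^{t}
=\|a\|_{\LR{2}(\Omega)}^{2}\,\|c\|_{\LR{r}(\Omega)}^{2r/(r-3)}.
\]
A direct check shows that $s=\tfrac{2r}{r-3}$ satisfies $\tfrac{2}{s}+\tfrac{3}{r}=1$, matching the exponent in the statement. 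Choosing the multiplicative constant in Young's inequality small enough absorbs the factor $C$ from Sobolev embedding into a prefactor $\delta$ in front of $\|a\|_{\HSR{1}}^2+\|\nabla b\|_{\LR{2}}^2$, yielding the first assertion. The second assertion, for $a\in\HSRN{1}(\Omega)$, follows by the same argument with the Sobolev embedding $\HSRN{1}(\Omega)\hookrightarrow\LR{6}(\Omega)$ replaced by its homogeneous form $\|a\|_{\LR{6}(\Omega)}\le C\|\nabla a\|_{\LR{2}(\Omega)}$ (Sobolev--Poincaré).

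There is no real obstacle here; the only mildly delicate point is bookkeeping the exponents to land exactly on the Serrin pair $(s,r)$. The three-parameter Young's inequality is what pins down the exponent $s=2r/(r-3)$, and the condition $r>3$ is precisely what makes $t<\infty$ and $r'<6$, so that both the interpolation and the absorption are admissible.
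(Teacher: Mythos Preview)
Your proof is correct and follows essentially the same route as the paper: H\"older with exponents $(r',2,r)$ where $r'=2r/(r-2)$, then Gagliardo--Nirenberg (which you write out as $L^2$--$L^6$ interpolation plus the Sobolev embedding $H^1\hookrightarrow L^6$), then Young's inequality. The paper's parameter $p$ is exactly your $r'$, and its interpolation exponent $\alpha=3(p-2)/2p$ equals your $\theta=3/r$, so the two arguments coincide.
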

\begin{proof}
	First consider the case $a\in\HSRN{1}(\Omega)$.
	H\"older's, Gagliardo--Nirenberg's, and Young's inequalities provide the estimate
	\begin{align*}
		\begin{split}
			 \int_\Omega  \snorm{a}\,\snorm{\nabla  b} \,\snorm{c}\dx\leq{}&  \|  a  \|_{L^p(\Omega)} \| \nabla b  \|_{L^2(\Omega)} \| c  \|_{L^{2p/(p-2)}(\Omega)} \\ 
			\leq{}& c_p \| a  \|_{L^2(\Omega)} ^{(1-\alpha)}
			\| \nabla  a  \|_{L^2(\Omega)}^{\alpha}
			\| \nabla  b  \|_{L^2(\Omega)}
			\|  c \|_{L^{2p/(p-2)}(\Omega)} 
			\\ \leq{}& \delta \left ( \| \nabla  a \| ^2_{L^2(\Omega)}  + \| \nabla  b \| ^2 _{L^2(\Omega)}\right ) + C _\delta \|  c \|_{L^{2p/(p-2)}(\Omega)} ^{2/(1-\alpha)}
			\|  a\|_{L^2(\Omega)}^2 \,,
		\end{split}
	\end{align*}
	where $p\in(2,\infty)$ and $\alpha\in(0,1)$ is chosen according to Gagliardo--Nirenberg's inequality by 
	$$\alpha = 3 (p-2)/2p\quad\text{for}\quad  3<2p/(p-2)\,.$$ 
	Letting $r=2p/(p-2)$ concludes the proof for $a\in\HSRN{1}(\Omega)$.
	If we only have $a\in\HSR{1}(\Omega)$,
	then we can merely apply the Gagliardo--Nirenberg inequality with 
	$\|  a  \|_{\HSR{1}(\Omega)}$
	instead of
	$\| \nabla  a  \|_{L^2(\Omega)}$,
	which yields the asserted inequality in this case.
\end{proof}

\begin{lemma}\label{lem:invar}
	Let $g_0\in\mathbb{R}$. Let $f\in L^1(0,T)$ and $g\in L^\infty(0,T)$ with $g\geq 0$ a.e.~in $(0,T)$.
	Then the following two inequalities are equivalent:
	\begin{equation}\label{ineq1}
		-\int_0^T \phi'(t) g(t) \de t  + \int_0^T \phi(t) f(t) \de t \leq g_0
		\qquad \forall\phi \in \tilde{\C} ([0,T]),
	\end{equation}
	where $\tilde{\C} ([0,T]):=\{\phi\in\C^1([0,T]):\phi \geq 0,\phi'\leq 0, \phi(0)=1, \phi(T)=0 \}$,
	and 
	\begin{equation}
		g(t) + \int_0^t f(s) \de s \leq g_0 \quad \text{for a.e.~}t\in(0,T)\,\label{ineq2}.
	\end{equation}
	This equivalence remains valid if we replace $\tilde\C([0,T])$ by
	$\tilde{W}((0,T))\coloneqq\setc{\phi\in\WSR{1}{1}((0,T))}{
	\phi \geq 0,\,\phi'\leq 0 \text{ a.e.}, \, \phi(0)=1, \,\phi(T)=0 } $.
\end{lemma}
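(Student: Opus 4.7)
The plan is to prove the equivalence in two directions, then deal with the replacement of $\tilde{\C}([0,T])$ by $\tilde W((0,T))$ as a separate approximation step. Throughout, note that the hypothesis $f\in L^1(0,T)$ and $g\in L^\infty(0,T)$ makes every integral in sight well-defined, and that both function classes $\tilde{\C}([0,T])$ and $\tilde W((0,T))$ consist of non-negative, non-increasing functions that equal $1$ at $t=0$ and $0$ at $t=T$.

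\textbf{Step 1: \eqref{ineq2} $\Rightarrow$ \eqref{ineq1}.} This direction is essentially algebraic. For any $\phi$ in either class, multiply \eqref{ineq2} by $-\phi'(t)\geq 0$ and integrate over $(0,T)$. Using $\phi(0)=1,\ \phi(T)=0$,
\[
-\int_0^T\phi'(t)\,g_0\dt \;=\; g_0\bp{\phi(0)-\phi(T)} \;=\; g_0 .
\]
For the cumulative term, integration by parts (which is valid for $\phi\in W^{1,1}(0,T)$) gives
\[
-\int_0^T\phi'(t)\!\int_0^t f(s)\ds\,\dt
\;=\; \int_0^T\phi(t)f(t)\dt,
\]
where the boundary terms vanish thanks to $\phi(T)=0$ and $\int_0^0 f\ds=0$. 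Combining the two identities yields~\eqref{ineq1}.

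\textbf{Step 2: \eqref{ineq1} with $\tilde W$ $\Rightarrow$ \eqref{ineq2}.} For an arbitrary $t_0\in(0,T)$ and small $\varepsilon>0$ such that $t_0+\varepsilon<T$, test~\eqref{ineq1} with the piecewise affine cutoff
\[
\phi_\varepsilon(t)=\begin{cases}
1, & 0\leq t\leq t_0,\\
1-\tfrac{t-t_0}{\varepsilon}, & t_0<t<t_0+\varepsilon,\\
0, & t_0+\varepsilon\leq t\leq T,
\end{cases}
\]
which clearly belongs to $\tilde W((0,T))$. Then $-\phi_\varepsilon'=\tfrac{1}{\varepsilon}\mathbf 1_{(t_0,t_0+\varepsilon)}$, so~\eqref{ineq1} reads
\[
\frac{1}{\varepsilon}\int_{t_0}^{t_0+\varepsilon}g(t)\dt
+\int_0^{t_0}f(t)\dt
+\int_{t_0}^{t_0+\varepsilon}\!\bp{1-\tfrac{t-t_0}{\varepsilon}}f(t)\dt \leq g_0 .
\]
Letting $\varepsilon\downarrow 0$, the first term tends to $g(t_0)$ for every Lebesgue point $t_0$ of $g$ by the Lebesgue differentiation theorem (since $g\in L^\infty\subset L^1_{\mathrm{loc}}$), and the last term tends to $0$ by absolute continuity of the integral of $f\in L^1(0,T)$. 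This yields~\eqref{ineq2} at every such $t_0$, hence for a.e.~$t\in(0,T)$.

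\textbf{Step 3: Replacing $\tilde W$ by $\tilde{\C}$.} Since $\tilde{\C}([0,T])\subset\tilde W((0,T))$, we already have that~\eqref{ineq1} with $\tilde W$ implies~\eqref{ineq1} with $\tilde{\C}$, and Step~1 gives the converse implications from~\eqref{ineq2}. It remains to show that~\eqref{ineq1} for all $\phi\in\tilde{\C}([0,T])$ implies~\eqref{ineq1} for all $\phi\in\tilde W((0,T))$. Given $\phi\in\tilde W((0,T))$, I would extend $\phi$ by $1$ to the left of $0$ and by $0$ to the right of $T$ and mollify with a smooth non-negative kernel, then compose with a small affine time-rescaling to re-enforce the boundary values $\phi(0)=1$, $\phi(T)=0$ exactly. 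Since the mollification of a non-increasing function is non-increasing and $C^\infty$, the approximants $\phi_n$ lie in $\tilde{\C}([0,T])$, converge to $\phi$ uniformly and $\phi_n'\to\phi'$ in $L^1(0,T)$; together with $g\in L^\infty$ and $f\in L^1$ one may pass to the limit in~\eqref{ineq1} written for $\phi_n$ to obtain the same inequality for $\phi$. Alternatively, one can bypass Step~3 altogether by re-running Step~2 with a $C^1$ version of $\phi_\varepsilon$ (e.g. a cubic Hermite spline joining $1$ and $0$ on $[t_0,t_0+\varepsilon]$), which is already in $\tilde{\C}([0,T])$.

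The main technical nuisance is Step~3 (or the $C^1$-variant of Step~2): one has to ensure that the mollification preserves monotonicity and non-negativity, and that the rescaling needed to restore the boundary values does not spoil convergence $\phi_n'\to\phi'$ in $L^1$. Everything else is a direct computation.
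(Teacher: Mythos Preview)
Your argument is correct. The paper does not give its own proof of this lemma but merely cites \cite[Lemma~2.4]{maxdiss}, so there is nothing to compare at the level of strategy; your direct proof via the Lebesgue differentiation theorem with a piecewise-affine (or $C^1$) cutoff is exactly the standard route and would serve as a self-contained replacement for the citation.
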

\begin{proof}
See~\cite[Lemma~2.4]{maxdiss}.
\end{proof}

\subsection{General hypotheses}

Throughout this manuscript, 
we investigate \eqref{eqvis} under the following assumptions.

\begin{hypothesis}\label{hypo}
We let $\Omega\subset\R^3$ be a bounded Lipschitz domain
and  $T\in(0,\infty]$. 
Moreover, $\potential\colon\LRdev{2}(\Omega)\to[0,\infty]$ 
denotes a convex, lower semicontinuous functional that satisfies $\potential(\zerotens)=0$.
In particular, $\potential$ is weakly lower semicontinuous in $\LRdev{2}(\Omega)$.
For the remaining data we assume the regularity
\begin{equation}\label{el:data}
	\vvel_0\in\LRsigma{2}(\Omega), \qquad
	\Stens_0\in\LRdev{2}(\Omega), \qquad
	\vf\in\LRloc{2}\np{[0,T);\HSR{-1}(\Omega)^3}.
\end{equation}
\end{hypothesis}

\section{Generalized and strong solutions in the case of stress diffusion}
\label{sec:gamma>0}

In this section, we investigate system~\eqref{eqvis} for $\gamma>0$, 
that is, when stress diffusion is present.
We begin by introducing the notions of strong and generalized solutions, and show that 
for sufficiently regular data with $\mathcal{P}(\Se_0)<\infty$ 
a strong solution exists at least locally in time.
The global-in-time existence of generalized solutions has been established in~\cite{EiterHopfMielke2021}.
The second main result of this section concerns a weak-strong stability estimate for the 
relative energy between a generalized and a strong solution.
An important implication of this estimate is 
that any generalized solution coincides with the strong solution starting from the same initial data
as long as the latter exists (see Theorem~\ref{thm:weakstrong}).
The basis of the stability estimate is an inequality involving the relative energy between a generalized solution and an arbitrary sufficiently regular competitor taking the role of a test function, see Proposition~\ref{prop:relen}. 
We will take up this energy-variational inequality in Section~\ref{sec:ev.sol}, where it provides a framework for passing to the limit $\gamma\to0$.

\subsection{Definition of generalized and strong solutions}

For the definition of generalized solutions,
recall the definition of the energy space $\Xb$ from~\eqref{eq:weaksolspace}.

\begin{definition}[Generalized solution]\label{def:gensol} 
	We call a couple $(\vvel,\Stens)$ a generalized solution of
	system~\eqref{eqvis} if 
	$(\vvel,\Stens)\in \Xb=\LHT\times\XT$ and
	the following holds true:
	\begin{enumerate}
		\item 	The velocity field satisfies the weak formulation
		\begin{equation}\label{eq:weaksol.v}
			\begin{aligned}
				\int_0^T\int_\Omega \bb{
					-\vvel\cdot\pt\Phi
					+(\vvel\cdot\nabla)\vvel\cdot\Phi
					+\eta \Stens:\grad\Phi
					&+\mu\grad\vvel:\grad\Phi
				}\dx\dt
				\\
				&=\int_0^T\langle\vf,\Phi\rangle\dt
				+\int_\Omega\vvel_0\cdot\Phi(\cdot,0)\dx
			\end{aligned}
		\end{equation}
for all $\Phi\in\CRcisigma\np{\OmTzero}$, and the partial energy inequality
\begin{equation} 
	\frac{1}{2}\norm{\vvel(t)}_{L^2(\Omega)}^2
	+\mu\int_0^{t}\norm{\grad\vvel}_{L^2(\Omega)}^2\ds
	\leq\frac{1}{2} \norm{\vvel_0}_{L^2(\Omega)}^2
	+\int_0^{t}\! \langle\vf,\vvel\rangle\ds
	-\int_0^{t}\!\int_\Omega\eta\Stens:\grad\vvel \dx\ds
	\label{est:enineq.v}
\end{equation}
is satisfied for almost all $t\in(0,T)$.
\item The extra stress tensor $\Se$ satisfies the evolutionary variational inequality
\begin{equation}\label{est:varin0T}
\begin{aligned}
		\tfrac{1}{2}\|\Stens(t){-}\tStens(t)\|_{L^2(\Omega)}^2
		+\int_0^{t}\!\!\langle\pt\tStens;\Stens{-}\tStens\rangle
		{+}\potential(\Stens){-}\potential(\tStens)\ds
		&+\int_0^{t}\!\!\!\int_\Omega\!\gamma\grad\Stens:\grad(\Stens{-}\tStens)\dx\ds 
		\\
		-\int_0^{t}\!\int_\Omega \bp{\vvel\cdot\grad\Stens
			+\Stens\skewgrad\vvel-\skewgrad\vvel\Stens}:\tStens 
		&-\eta\symmgrad\vvel:(\Stens-\tStens)\dx\ds\\
		&\qquad\ 
		\leq\tfrac{1}{2}\|\Stens_0-\tStens(0)\|_{L^2(\Omega)}^2,
	\end{aligned}
\end{equation}
for all $\tStens \in\ZT^q$, $q\in(2,\infty)$, and a.a.~$t\in(0,T)$.
\end{enumerate}

\end{definition}

In the article~\cite{EiterHopfMielke2021}, existence for~\eqref{eqvis} was shown based on a slightly weaker form of the notion of generalized solutions than introduced above. The main difference lies in the fact that the first term in~\eqref{est:varin0T}, that is, the partial relative energy 
at time $t$, does not appear in the notion used in~\cite{EiterHopfMielke2021}. The above version including this term
is essentially equivalent (cf.~Lemma~\ref{l:0=>0T}) but better suited for the relative energy methods used in the present paper.

\begin{theorem}[Existence of generalized solutions~\cite{EiterHopfMielke2021}]\label{thm:gensol}
	Assume that Hypothesis~\ref{hypo} is satisfied.
	Then there exists a generalized solution $\np{\vvel,\Stens}\in\Xb$ 
	in the sense of Definition \ref{def:gensol}.
\end{theorem}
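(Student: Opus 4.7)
The plan is to reduce the assertion to the global existence result already available in the literature. Specifically, Theorem~3.4 of~\cite{EiterHopfMielke2021} establishes, under exactly the regularity assumptions collected in Hypothesis~\ref{hypo}, the existence of a pair $(\vvel,\Stens)\in\Xb=\LHT\times\XT$ that satisfies the weak velocity equation \eqref{eq:weaksol.v}, the energy inequality \eqref{est:enineq.v}, and a variational inequality for the stress that differs from~\eqref{est:varin0T} only by the absence of the instantaneous relative energy term $\tfrac{1}{2}\|\Stens(t)-\tStens(t)\|_{L^2(\Omega)}^2$ on the left-hand side. Hence the only real task is to upgrade that weaker integrated inequality to the stronger pointwise-in-$t$ inequality~\eqref{est:varin0T} and to verify that no further regularity is lost.

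For this upgrade I would invoke the appendix lemma referenced as Lemma~\ref{l:0=>0T}, which is precisely designed to pass from the integrated variational formulation used in~\cite{EiterHopfMielke2021} to the form~\eqref{est:varin0T}. The mechanism behind that lemma is the one encoded in Lemma~\ref{lem:invar}: one tests the integrated inequality against non-increasing cutoffs $\phi\in\tilde C([0,T])$, rearranges so that the mapping
\[
t\mapsto \tfrac{1}{2}\|\Stens(t)-\tStens(t)\|_{L^2(\Omega)}^2
\]
appears as the function $g(t)$ of Lemma~\ref{lem:invar}, and identifies the remaining time integrals with the function $f(t)$ in~\eqref{ineq1}. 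The required non-negativity and essential boundedness of $g$ follow from $\Stens\in L^\infty_{\mathrm{loc}}([0,T);\Le)$ and $\tStens\in \ZT^q\hookrightarrow C([0,T];\Le)$, while integrability of $f$ is a direct consequence of the regularity of the remaining terms (using Hölder's inequality, the Serrin exponents \eqref{serrin} for $\tStens$, and the weak lower semicontinuity of $\potential$ for the subdifferential contribution).

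The subtle point is to ensure that the partial $t$-slice $\tfrac{1}{2}\|\Stens(t)-\tStens(t)\|_{L^2(\Omega)}^2$ is well defined for almost every $t$ and that the inequality is indeed recovered at these Lebesgue points. Since $\Stens\in\XT$ one only has continuity in a weak sense, so I would argue by first establishing weak continuity $\Stens\in\Cw([0,T);\Le)$ from the variational inequality applied to smooth test tensors (the variational inequality yields bounds on $\jaumannder{\Stens}$ in a negative-order space, giving equicontinuity in a weak topology), and then using lower semicontinuity of the $\Le$-norm with respect to weak convergence to obtain the pointwise estimate from the integrated cutoff version. The main obstacle in the argument is exactly this identification of the partial relative energy at arbitrary Lebesgue times, since the limit test functions $\tStens$ need not be smooth and one must justify interchanging the limit $\phi\to\mathbf{1}_{[0,t]}$ with the nonlinear terms containing $\grad\vvel$ and $\Stens$; this is handled by the Serrin-type integrability built into the definition of $\ZT^q$. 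Once Lemma~\ref{l:0=>0T} is applied, the resulting pair $(\vvel,\Stens)$ is a generalized solution in the sense of Definition~\ref{def:gensol}, concluding the proof.
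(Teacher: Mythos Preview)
Your high-level strategy---cite the existence result from~\cite{EiterHopfMielke2021} and then upgrade the variational inequality via Lemma~\ref{l:0=>0T}---is exactly what the paper does. However, your account of the mechanism behind Lemma~\ref{l:0=>0T} is incorrect, and you miss one ingredient that the paper treats explicitly.

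First, Lemma~\ref{l:0=>0T} is \emph{not} proved via Lemma~\ref{lem:invar}. The inequality~\eqref{est:varin0} is already pointwise in $t$ and contains no scalar cutoff $\phi$ to vary over, so there is no way to make $g(t)=\tfrac{1}{2}\|\Stens(t)-\tStens(t)\|_{L^2}^2$ appear by the equivalence in Lemma~\ref{lem:invar}. The actual argument inserts into~\eqref{est:varin0} the interpolating test tensor
\[
\tStens_{\kappa,\delta}(s)=(1-\zeta_\delta(s))\,\tStens(s)+\zeta_\delta(s)\,\Stens_\kappa(s),
\]
where $\Stens_\kappa$ is a backward time average of $\Stens$ and $\zeta_\delta$ is a smooth transition from $0$ to $1$ near $s=t$. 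Because $\tStens_{\kappa,\delta}(t)=\Stens_\kappa(t)$, the time-derivative term produces, after sending $\delta\downarrow0$ and then $\kappa\downarrow0$, precisely the missing boundary contribution $\tfrac{1}{2}\|\Stens(t)-\tStens(t)\|_{L^2}^2$. No weak-continuity or lower-semicontinuity argument for $\Stens$ is used here; the time mollification $\Stens_\kappa$ supplies the needed regularity of $\partial_t\tStens_{\kappa,\delta}$.

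Second, you assert that~\eqref{est:enineq.v} is provided directly by~\cite{EiterHopfMielke2021}, but the paper points out that this partial energy inequality is stated there only for the approximating family $(\vvel_\varepsilon,\Stens_\varepsilon)$. One must pass to the limit $\varepsilon\to0$ using the strong $L^2$ convergence of $\vvel_\varepsilon$ (to handle $\|\vvel(t)\|_{L^2}^2$) and an integration by parts to write $\int_\Omega\eta\Stens:\nabla\vvel=-\int_\Omega\eta(\nabla\cdot\Stens)\cdot\vvel$ so that the coupling term is linear in each factor and passes to the limit under weak/strong convergence. Finally, a minor point: \cite{EiterHopfMielke2021} assumes a $\CR{1,1}$ boundary, whereas Hypothesis~\ref{hypo} only asks for Lipschitz; the paper notes that this relaxation is harmless for homogeneous boundary data because the trivial extension suffices.
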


For the most part, Theorem~\ref{thm:gensol} was established in~\cite{EiterHopfMielke2021}. Details on how to infer the version above, which is formulated with the current, upgraded version of generalized solutions, are provided in Appendix~\ref{sec:appendix}. 

\begin{remark}\label{rem:enineq.gensol}
	Setting $\tStens=0$ in \eqref{est:varin0T}
	yields the partial energy inequality
	\begin{equation}\label{est:enineq.S}
		\frac{1}{2}\|\Stens(t)\|_{L^2(\Omega)}^2
		+\int_0^{t}\!\bp{\gamma\norm{\grad\Stens}_{L^2(\Omega)}^2 +\potential(\Stens)}\ds
		\\
		\leq\frac{1}{2}\|\Stens_0\|_{L^2(\Omega)}^2
		+\int_0^{t}\!\int_\Omega\eta\symmgrad\vvel:\Stens\dx\ds.
	\end{equation}
	Since we have $\symmgrad\vvel:\Stens=\grad\vvel:\Stens$ by the symmetry of $\Stens$,
	summation of \eqref{est:enineq.v} and \eqref{est:enineq.S}
	further yields the total energy-dissipation inequality
	\begin{equation} 
			\energy(\vvel(t),\Stens(t))
			+\int_0^{t}\! \bp{\mu\norm{\grad\vvel}_{L^2(\Omega)}^2
				+\gamma\norm{\grad\Stens}_{L^2(\Omega)}^2 
				+\potential(\Stens)}\ds
			\leq\energy(\vvel_0,\Stens_0)
			+\int_0^{t}\!\langle\vf,\vvel\rangle \ds,
	\label{est:enineq}
	\end{equation}
	where the energy functional $\energy$ was introduced in \eqref{eq:energy.def}.
\end{remark}

\begin{definition}[Strong solution]\label{def:strong.vS}
	We call $(\vvel,\Stens)\in\Xb=\LHT\times\XT$ 
	a strong solution of problem \eqref{eqvis} with initial data $(\vvel_0,\Se_0)$ if the following holds:
	\begin{itemize}
		\item $\vvel\in\LRloc{s}([0,T),\LR{r}(\Omega)^3)$ for some $r,s\in(1,\infty)$ 
		with $2/s+3/r=1$, and equation~\eqref{eqvis1} for the velocity component is satisfied in the weak sense, \textit{i.e.}, equation~\eqref{eq:weaksol.v} holds true
		for all $\Phi\in\CRcisigma\np{\OmTzero}$.
		\item $\Stens\in\ZT^q$ for some $q\in(1,\infty)$, $\Se(0)=\Se_0$, 
		and 
		for all $\mathbb{T}\in \HSRdev{1}(\Omega)$
		and a.a.\ $t\in (0,T)$ it holds
		\begin{align}\label{eq:100}
		\begin{multlined}
			\langle \t \Se(t),\Stens(t)- \mathbb{T}\rangle
			+\gamma\int_\Om\nabla\Stens(t)\dreidots\bp{\nabla(\Stens(t)-\mathbb{T})}\dx
			+\mathcal{P}(\Stens(t))-\mathcal{P}(\mathbb{T})
			\\
			+\int_\Om
			\big( (\f v \cdot \nabla) \Se + \Se (\nabla \f v )_{\skw} -(\nabla \f v)_{\skw} \Se	-\eta(\nabla \vvel)_{\sym}\big)\Big|_t:\big(\Stens(t)-\mathbb{T}\big)
		\dx
			\leq0.
		\end{multlined}		
\end{align}
	\end{itemize}
\end{definition}
Lemma~\ref{lem:ineq} ensures that the first and the second term in the first line and the term in the second line of inequality~\eqref{eq:100} are in $L^1_\loc([0,T))$ and thus, in particular, finite a.e.~in $(0,T)$. Choosing $\mathbb{T}=\mathbb{O}$ further shows that $\mathcal{P}(\Se)\in L^1_\loc([0,T))$ whenever $(\vvel,\Stens)$ is a strong solution,
so that, in particular, $\potential(\Stens(t))<\infty$ for a.a.~$t\in(0,T)$.

Moreover, the velocity field $\vvel$ of a strong solution $\np{\vvel,\Stens}$
in the sense of Definition \ref{def:strong.vS}
is a weak solution 
to \eqref{eqvis1}
that satisfies a Serrin condition.
This is in accordance with the well-known notion of strong solutions 
to the classical Navier--Stokes equations
as used in \cite{Sohr_NSE_2001} for example.
In particular,
due to this regularity condition,
it is not necessary to additionally assume that $\vvel$
satisfies the partial energy inequality
\eqref{est:enineq.v}
since the corresponding energy \emph{equality} is satisfied automatically,
that is, 
\begin{equation}
	\label{eq:energy.v}
	\frac{1}{2}\norm{\vvel(t)}_{L^2(\Omega)}^2
	+\mu\int_0^{t}\norm{\grad\vvel}_{L^2(\Omega)}^2\ds
	=\frac{1}{2} \norm{\vvel_0}_{L^2(\Omega)}^2
	+\int_0^{t}\! \langle\vf,\vvel\rangle\ds
	-\int_0^{t}\!\int_\Omega\eta\Stens:\symmgrad\vvel \dx\ds.
\end{equation}
As a consequence of these observations, we infer the following consistency property.
\begin{remark}[Strong solutions are generalized solutions]
Any strong solution $(\vvel,\Se)$ in the sense of Definition~\ref{def:strong.vS} is a generalized solution in the sense of Definition~\ref{def:gensol}. For the velocity component, this follows from the fact that strong solutions satisfy the energy equality~\eqref{eq:energy.v}. Inequality~\eqref{est:varin0T} for given $\tS\in Z^q_T$ is obtained upon integrating 
 inequality~\eqref{eq:100} at time $s$ and with the choice $\mathbb{T}:=\tS(s)$ in time from $s=0$ to $s=t$, where one uses the identity
	\begin{align*}
		\int_0^t\langle \t \Se,\Stens{-} \tS\rangle\ddd s 
		= \frac{1}{2}\|\Stens(t){-}\tStens(t)\|_{L^2(\Omega)}^2
		-	\frac{1}{2}\|\Stens_0{-}\tStens(0)\|_{L^2(\Omega)}^2
		+\int_0^t\langle \t\tS,\Stens{-} \tS\rangle\ddd s.
	\end{align*}
\end{remark}
Let us finally point out the relation between the variational inequality~\eqref{eq:100} and the differential inclusion~\eqref{eqvis2}. For this purpose let $\mathcal{\widehat P}:\HSRdev{1}(\Omega)\to[0,\infty]$ be the restriction of $\mathcal{P}$ to $\HSRdev{1}(\Omega)$.
By definition,
its convex subdifferential $\partial\mathcal{\widehat P}$ maps elements of $\HSRdev{1}(\Omega)$ to subsets of $\HSRdev{1}(\Omega)^*$, and for almost all $t\in(0,T)$, inequality~\eqref{eq:100}  can be written as
a differential inclusion
in $\HSRdev{1}(\Omega)^*$, namely
\begin{equation}
-\big(	\t \Se + (\f v \cdot \nabla) \Se + \Se (\nabla \f v )_{\skw} -(\nabla \f v)_{\skw} \Se -\gamma \Delta \Se-  \eta(\nabla \f v)_{\sym} \big) \in \partial \mathcal{\widehat P}(\Se)
\label{inclusion}
\end{equation}
with the understanding that
$\langle-\gamma \Delta \Se,\Ttens\rangle:=\gamma\int_\Om\nabla \Se \dreidots\nabla\Ttens\dx$.

 In Section~\ref{ssec:strong} we will construct local-in-time strong solutions enjoying the extra regularity $\Delta \Se (t)$, $\t \Se(t)  \in \LRdev{2}(\Omega)$, which satisfy the inclusion~\eqref{inclusion} in the $\LRdev{2}(\Omega)$ sense as well
 (\textit{cf.}~Remark~\ref{rem:addreg}). 

\begin{remark}[Energy equality]
We note that for a strong solution $( \f v , \Se)$, we may recover the  energy equality~\eqref{eq:enbalance}. Indeed, the inclusion~\eqref{inclusion} implies that there exists $\mathbb{G}$ with $\mathbb{G}(t)\in  \partial \mathcal{\widehat P}(\Se(t))$ for a.a.~$t\in(0,T)$ such that the equality
\begin{align*}
\t \Se + ( \f v \cdot \nabla) \Se + \Se (\nabla \f v )_{\skw} - (\nabla \f v)_{\skw} \Se - \gamma \Delta \Se + \mathbb{G} = \eta(\nabla \f v)_{\sym} \,
\end{align*}
holds in $(\HSR{1}_\delta(\Omega))^\ast$ a.e.~in $(0,T)$. 
Thanks to the extra regularity,
we may test this identity with~$\Se$.
 Integrating the resulting equality in time leads to the partial energy equality
\begin{align*}
\frac{1}{2}\| \Se (t) \|_{L^2(\Omega)}^2 + \!\int_0^t \!\gamma \| \nabla \Se \|_{L^2(\Omega)}^2 + \mathcal{{\widehat P}}(\Se)+ \mathcal{{\widehat P}}^* (\mathbb{G}) \de s = \frac{1}{2}\| \Se_0\|_{L^2(\Omega)}^2 +\! \int_0^t \!\int_\Omega \!\eta \Se : ( \nabla \f v)_{\sym} \dx\de s \,
\end{align*}
for a.e.~$t\in (0,T)$, where we used the Fenchel  identity $ \langle \mathbb{G}, \mathbb{S}\rangle =  \mathcal{{\widehat P}}(\Se)+ \mathcal{{\widehat P}}^* (\mathbb{G})$ for $\mathbb{G}\in \partial  \mathcal{{\widehat P}}(\Se)$. 
Furthermore, 
we may use $\vvel$ as a test function in \eqref{eq:weaksol.v},
which leads to the partial energy equality
\eqref{eq:energy.v}
for a.a.~$t\in(0,T)$. 
Summing up the two partial energy equalities, we arrive at the energy equality~\eqref{eq:enbalance}.
\end{remark}

\subsection{Local existence of strong solutions}\label{ssec:strong}
In the following, we show that under an additional regularity hypothesis on the data, problem~\eqref{eqvis} has a strong solution on a time interval $(0,T)$ provided $T>0$ is small enough.
For simplicity, we  only consider the case $\vf\equiv0$ here, but the result equally holds for forcings  $\vf\in\LRloc{2}\np{[0,\infty);\LR{2}(\Omega)^3}.$

\begin{theorem}[Local existence of strong solutions]
\label{thm:strongsol}
Let  $\gamma>0,\mu>0$ and $\eta\ge0$.  
Suppose that $\Om\subset\mathbb{R}^3$ is a bounded domain with $\CR{2}$ boundary, 
and let $\vf\equiv0$. In addition to the basic hypotheses~\eqref{el:data} assume that 
\begin{align}\label{eq:data.reg}
	\vvel_0\in H^1(\Om)^3,\;\;  \Stens_0\in H^1(\Om)^{3\times 3} \quad \text{ and  }\quad\mathcal{P}(\Stens_0)<\infty.
\end{align}
Then there exists $T\in(0,\infty)$
 such that problem~\eqref{eqvis} has at least one strong solution $(\vvel,\Stens)$ with initial data $(\vvel_0,\Se_0)$.
This solution enjoys the following additional regularity properties:
\begin{align}\label{eq:149}
	\begin{aligned}
    \vvel&\in H^1(0,T;L^2(\Omega)^3)\cap L^\infty(0,T;H^1(\Omega)^3)\cap L^2(0,T;H^2(\Omega)^3),
	\\\Stens&\in H^1(0,T;L^2(\Omega)^{3\times 3})\cap L^\infty(0,T;H^1(\Omega)^{3\times 3}),
	\quad \mathcal{P}(\Stens)\in L^\infty(0,T).
\end{aligned}
\end{align}
\end{theorem}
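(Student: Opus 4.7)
The plan is to combine a Moreau--Yosida regularization of the non-smooth potential $\potential$ with a Galerkin scheme, derive a higher-order a priori estimate uniform in all approximation parameters on a short time interval, and pass to the limits. For $\lambda>0$ set
\[
\potential_\lambda(\Stens):=\min_{\Ttens\in\LRdev{2}(\Omega)}\Bp{\potential(\Ttens)+\tfrac{1}{2\lambda}\norm{\Stens-\Ttens}_{\LR{2}(\Omega)}^2}.
\]
This Moreau envelope is convex, non-negative, of class $C^{1,1}$, has Lipschitz gradient $\potential_\lambda'$, and satisfies $0\leq\potential_\lambda(\Stens_0)\leq\potential(\Stens_0)<\infty$ by assumption~\eqref{eq:data.reg}. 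The subdifferential inclusion in~\eqref{eqvis2} is thereby replaced by a smooth PDE with single-valued right-hand side $\potential_\lambda'(\Stens)$. I would then run a Galerkin approximation in the eigenfunctions of the Stokes operator for $\vvel$ and of the Neumann Laplacian on $\LRdev{2}(\Omega)$ for $\Stens$, obtaining approximating sequences $(\vvel^{n,\lambda},\Stens^{n,\lambda})$ on a maximal time interval via Picard--Lindel\"of.

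The crux of the argument is a higher-order energy estimate uniform in $n$ and $\lambda$. I would test the Galerkin velocity equation with $\pt\vvel^{n,\lambda}$ and the stress equation with $\pt\Stens^{n,\lambda}$. Since $\potential_\lambda\in C^{1,1}$ obeys the chain rule $\ddt\potential_\lambda(\Stens)=\int_\Omega\potential_\lambda'(\Stens):\pt\Stens\dx$, summing the two tests and integrating by parts in the diffusive terms yields, schematically,
\[
\ddt\Bp{\tfrac{\mu}{2}\norm{\grad\vvel}_{\LR{2}}^2+\tfrac{\gamma}{2}\norm{\grad\Stens}_{\LR{2}}^2+\potential_\lambda(\Stens)}+\norm{\pt\vvel}_{\LR{2}}^2+\norm{\pt\Stens}_{\LR{2}}^2\leq\mathrm{RHS}(\vvel,\Stens),
\]
where $\mathrm{RHS}$ collects the convective contributions and the Zaremba--Jaumann quadratic terms $\Stens\skewgrad{\vvel}-\skewgrad{\vvel}\Stens$. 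Combining the Sobolev embedding $\HSR{2}\hookrightarrow\LR{\infty}$ in three dimensions, the Gagliardo--Nirenberg--type estimates of Lemma~\ref{lem:ineq}, and the Stokes/Neumann-Laplace elliptic regularity
\[
\norm{\vvel}_{\HSR{2}}\leq C\bp{\norm{\pt\vvel}_{\LR{2}}+\norm{(\vvel\cdot\grad)\vvel}_{\LR{2}}+\eta\norm{\grad\Stens}_{\LR{2}}},
\]
every term in $\mathrm{RHS}$ can be absorbed into a small fraction of $\norm{\pt\vvel}_{\LR{2}}^2+\norm{\pt\Stens}_{\LR{2}}^2+\norm{\vvel}_{\HSR{2}}^2$ plus $C\Psi(t)^\alpha$, where $\Psi(t):=\norm{\grad\vvel(t)}_{\LR{2}}^2+\norm{\grad\Stens(t)}_{\LR{2}}^2+\potential_\lambda(\Stens(t))$ and $\alpha>1$. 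The resulting scalar differential inequality $\Psi'\leq C(1+\Psi^\alpha)$ stays finite on an interval $[0,T]$ whose length depends only on $\Psi(0)$, hence only on $\norm{\vvel_0}_{\HSR{1}}$, $\norm{\Stens_0}_{\HSR{1}}$ and $\potential(\Stens_0)$, and not on $n$ or $\lambda$. The assumption $\gamma>0$ enters crucially, since it is the parabolic diffusion in the stress equation that yields the $\LR{\infty}(0,T;\HSR{1}(\Omega))$ control on $\Stens$ needed to handle the Zaremba--Jaumann nonlinearity.

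With the uniform estimates in place, the Aubin--Lions lemma together with weak-$\ast$ compactness furnishes subsequential limits, first $(\vvel^\lambda,\Stens^\lambda)$ as $n\to\infty$ and then $(\vvel,\Stens)$ as $\lambda\to 0$, with the regularity~\eqref{eq:149}. The convective and Zaremba--Jaumann terms pass to the limit thanks to strong convergence in $\LR{2}(0,T;\HSR{1})$. For the stress equation, strong convergence of $\Stens^\lambda$ in $\LR{2}(0,T;\LRdev{2})$ combined with the identity $\potential_\lambda'(\Stens^\lambda)\in\partial\potential(J_\lambda\Stens^\lambda)$ and $J_\lambda\Stens^\lambda\to\Stens$ allows identification of the subdifferential in the limit by Minty's monotonicity trick, producing the variational inequality~\eqref{eq:100} (indeed the $\LRdev{2}$-inclusion~\eqref{inclusion}). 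The bound $\potential(\Stens)\in\LR{\infty}(0,T)$ follows from the uniform $\LR{\infty}$-bound on $\potential_\lambda(\Stens^\lambda)$ and weak lower semicontinuity of $\potential$.

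The main obstacle is precisely the interplay between the quadratic Zaremba--Jaumann nonlinearity and the non-smooth potential when closing the higher-order estimate: the former forces us to use $\LR{\infty}$-norms of $\vvel$, hence Stokes $\HSR{2}$-regularity, while the latter compels us to work through the Moreau--Yosida chain rule. The resulting bootstrap closes only on a short time interval, which is exactly the source of the local-in-time character of the statement.
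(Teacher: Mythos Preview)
Your proposal is correct and follows essentially the same architecture as the paper: Moreau--Yosida regularization of $\potential$, a Galerkin scheme built on Stokes eigenfunctions, a higher-order a priori estimate that closes as a nonlinear scalar differential inequality for
\[
\Psi(t)\sim\|\nabla\vvel(t)\|_{L^2}^2+\gamma\|\nabla\Stens(t)\|_{L^2}^2+\potential_\lambda(\Stens(t)),
\]
and then passage to the limit.

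The one genuine difference is the choice of test function in the velocity equation. The paper tests~\eqref{eqvis1} with $-P\Delta\vvel$, which directly produces $\mu\|P\Delta\vvel\|_{L^2}^2$ as a dissipative term and hence immediate $L^2_tH^2_x$ control on $\vvel$; the convective and coupling terms are then estimated via Agmon's inequality and absorbed into this dissipation. You instead test with $\partial_t\vvel$, obtaining $\|\partial_t\vvel\|_{L^2}^2$ as dissipation, and recover the $H^2$ bound afterwards through the Stokes elliptic estimate $\|\vvel\|_{H^2}\le C\|P\Delta\vvel\|_{L^2}$ combined with the equation itself. Both routes are standard for local strong solutions of Navier--Stokes type systems and lead to the same differential inequality (in the paper, $\psi'\le A\psi^3$); the paper's choice is a bit more direct because the $H^2$ control does not have to be bootstrapped back through the equation, while your choice has the minor advantage that the chain-rule structure $\tfrac{d}{dt}(\tfrac{\mu}{2}\|\nabla\vvel\|^2+\tfrac{\gamma}{2}\|\nabla\Stens\|^2+\potential_\lambda(\Stens))$ appears symmetrically for both unknowns. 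For the tensorial component both proofs test with $\partial_t\Stens$ and use the $C^{1,1}$ chain rule for the Moreau envelope in exactly the same way.
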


\begin{proof}
The main point in the proof is to derive sufficiently strong \textit{a priori} bounds (cf.~\eqref{eq:apri.s} below) for suitably regular solutions of~\eqref{eqvis} on some short time interval $[0,T]$ in the case that $\mathcal{P}$ is Fr\'echet differentiable with globally Lipschitz continuous derivative. 
For the actual construction of a strong solution on $(0,T)\times\Om$, 
one may then follow the roadmap in~\cite{EiterHopfMielke2021}: 
One first regularizes the potential $\mathcal{P}\colon\LRdev{2}(\Omega)\to[0,\infty]$ by its Moreau envelope and then performs a Galerkin approximation for the regularized potentials similar to~\cite[Section~4]{EiterHopfMielke2021}. Along the approximate sequence, the \textit{a priori} estimates
can be derived rigorously. Moreover, they allow us to pass to the limit along the approximate solutions (up to a subsequence), not only in the weak formulation for $(\vvel,\Stens)$ but also in the strong formulation~\eqref{eq:100} for the internal stress tensor, and imply the asserted regularity~\eqref{eq:149}.
Let us note that for the velocity component,
in the Galerkin approximation one should here use a Galerkin basis  composed of eigenfunctions of the Stokes operator $-P\Delta$ on $\LRsigma{2}(\Omega)$
with $P$ denoting the Helmholtz projector $P\colon\LR{2}(\Omega)^3\to\LRsigma{2}(\Omega)$ (rather than the smooth basis functions introduced in~\cite{EiterHopfMielke2021} for constructing weak solutions).
The eigenfunctions of the Stokes operator only belong to $\HSRNsigma{1}(\Omega)\cap\HSR{2}(\Omega)^3$ in general, but this regularity suffices for our purpose.

In the rest of the proof, we will show that, in addition to the fundamental energy estimate
\begin{multline}\label{eq:150}
	\begin{multlined}
	\sup_{t\in[0,T]}\bigg(\frac{1}{2}\|\vvel(t)\|_{{L^2(\Omega)}}^2+\frac{1}{2}\| \Stens(t)\|_{L^2(\Omega)}^2\bigg)
	\\\hspace{.15\linewidth}+\mu\|\nabla \vvel\|_{L^2(0,T;L^2(\Omega))}^2+\gamma\|\nabla \Stens\|_{L^2(0,T;L^2(\Omega))}^2+\int_0^T\mathcal{P}(\Stens(t))\,\ddd t \;\le\; \cten
\end{multlined}\hfill
\end{multline}
with $\cten:=\tfrac{1}{2}\|\vvel_0\|_{L^2(\Omega)}^2+\tfrac{1}{2}\| \Stens_0\|_{L^2(\Omega)}^2,$
regular solutions of~\eqref{eqvis} enjoy the following bound:
\begin{align}\label{eq:apri.s}
	\begin{aligned}
	\sup_{t\in[0,T]}\bigg(\|\nabla\vvel(t)\|_{L^2(\Omega)}^2
	&+\gamma\|\nabla\Stens(t)\|_{L^2(\Omega)}^2
	+\mathcal{P}(\Stens(t))\bigg)
	\\&+\mu\int_0^{T}\bigg(\|\nabla^2\vvel(t)\|_{L^2(\Omega)}^2
		+\|\partial_t\Stens(t)\|_{L^2(\Omega)}^2\bigg)\,\ddd t 
\;	\le \;\ctv,
\end{aligned}
\end{align}
for
$T=T(\|\vvel_0\|_{H^1(\Omega)},\| \Stens_0\|_{L^2(\Omega)}, \sqrt{\gamma}\| \nabla\Stens_0\|_{L^2(\Omega)},\potential(\Se_0),\gamma^{-1},\mu^{-1},\eta)>0$ small enough
and a finite constant
\begin{align}\label{eq:def.ct}
	\ctv&=C(\|\vvel_0\|_{H^1(\Omega)},\| \Stens_0\|_{L^2(\Omega)},
	\sqrt{\gamma}\| \nabla\Stens_0\|_{L^2(\Omega)},\mathcal{P}(\Stens_0),\gamma^{-1},\mu^{-1},\eta).
\end{align}
Here, the quantity $\ctv$ can be chosen to be non-decreasing in each of its arguments, while the time $T(\cdot)>0$ is non-increasing in each of its arguments.

In virtue of the embedding $\LR{2}(0,T;\HSR{2}(\Omega)^3)\hookrightarrow\LR{2}(0,T;\LR{\infty}(\Omega)^3)$,
estimate \eqref{eq:apri.s} further yields $\vvel\cdot\grad\vvel\in\LR{2}(0,T;\LR{2}(\Omega)^3)$,
which implies that $\pt\vvel\in\LR{2}(0,T;\LR{2}(\Omega)^3)$ (for details see e.g.~\cite[Lemma~6.2]{RRS_2016}).
Hence, the regularity asserted in \eqref{eq:149} indeed follows from \eqref{eq:apri.s}.

For the derivation of~\eqref{eq:apri.s}, 
we first test the Navier--Stokes equations~\eqref{eqvis1} with $-P\Delta\vvel(t)\in\LRsigma{2}(\Omega)$ to infer
\[
\frac{1}{2}\frac{\ddd}{\ddd t}\|\nabla\vvel\|_{L^2(\Omega)}^2+\mu\|P\Delta\vvel\|_{L^2(\Omega)}^2
=\int_{\Om}( \f v \cdot \nabla ) \f v \cdot P\Delta\vvel\;\ddd x
-\eta\int_{\Om}(\nabla\cdot\Stens)\cdot P\Delta\vvel\;\ddd x
=: I_1+I_2.
\]
Using Agmon's inequality as well as the Poincar\'e and Young inequalities
and the elliptic estimate 
$\norm{\vvel}_{H^2(\Omega)}\leq C\norm{P\Delta\vvel}_{\LR{2}(\Omega)}$ (see e.g.~\cite[Proposition~4.7]{CF_1988}), 
the integral terms can be estimated as follows:
\[
\begin{aligned}
	|I_1|&\le \|\vvel\|_{L^\infty(\Omega)} \|\nabla\vvel\|_{L^2(\Omega)} \|P\Delta\vvel\|_{L^2(\Omega)}
	\le  C\|\nabla\vvel\|_{L^2(\Omega)}^\frac{3}{2}
	\|P\Delta\vvel\|_{L^2(\Omega)}^\frac{3}{2}
	\\& \le  C\mu^{-3}\|\nabla\vvel\|_{L^2(\Omega)}^6+\frac{\mu}{4}\|P\Delta\vvel\|_{L^2(\Omega)}^2,
	\\
	|I_2|&\le C\mu^{-1}\eta^2\|\nabla \Stens\|_{L^2(\Omega)}^2+\frac{\mu}{4}\|P\Delta\vvel\|_{L^2(\Omega)}^2.
\end{aligned}
\]
Hence,
\begin{equation}\label{eq:v'}
	\frac{\ddd}{\ddd t}\|\nabla\vvel\|_{L^2(\Omega)}^2+\mu\|P\Delta\vvel\|_{L^2(\Omega)}^2
\le C\mu^{-3}\|\nabla\vvel\|_{L^2(\Omega)}^6+
C\mu^{-1}\eta^2\|\nabla \Stens\|_{L^2(\Omega)}^2.
\end{equation}
To proceed, let us recall that we may assume without loss of generality that $\partial\mathcal{P}(\Stens)$ is globally Lipschitz continuous. 
As mentioned above, the rigorous version of this step is to be carried out with the Moreau envelope of the nonsmooth dissipation potential, cf.~\cite{EiterHopfMielke2021}.
We may then test the evolution law \eqref{eqvis2} for the tensorial component with $\partial_t\Stens$ to find
\begin{multline*}
	\frac{\gamma}{2}\frac{\ddd}{\ddd t}\|\nabla\Stens\|_{L^2}^2+\|\partial_t\Stens\|_{L^2}^2
	+\int_\Om\partial\mathcal{P}(\Stens):\partial_t\Stens\,\ddd x
	\\\qquad=\int_\Om(\f v \cdot \nabla) \Se :\partial_t\Stens\,\ddd x
	+ \int_\Om\big(\Se (\nabla \f v )_{\skw} -(\nabla \f v)_{\skw} \Se\big):\partial_t\Stens\,\ddd x + \int_\Om \eta(\nabla \f v)_{\sym} :\partial_t\Stens\,\ddd x
	\\=: I_3+I_4+I_5.
\end{multline*}
The integral $I_3$ involving the convective term is estimated, using Agmon's inequality
and the bound 
$\norm{\vvel}_{H^2(\Omega)}\leq C\norm{P\Delta\vvel}_{\LR{2}(\Omega)}$, as 
\begin{align*}
	|I_3|\le \|\vvel\|_{L^\infty(\Omega)}	\|\nabla\Stens\|_{L^2(\Omega)}\|\partial_t\Stens\|_{L^2(\Omega)}
	&\le  C\|\nabla\vvel\|_{L^2(\Omega)}^\frac{1}{2}
	\norm{P\Delta\vvel}_{\LR{2}(\Omega)}^\frac{1}{2}
	\|\nabla\Stens\|_{L^2(\Omega)}\|\partial_t\Stens\|_{L^2(\Omega)}
	\\&\le   		C\|\nabla\vvel\|_{L^2(\Omega)}^2
	\|\nabla\Stens\|_{L^2(\Omega)}^4 + \frac{\mu}{4} 	\norm{P\Delta\vvel}_{\LR{2}(\Omega)}^2
	+ \frac{1}{6}\|\partial_t\Stens\|_{L^2(\Omega)}^2.
\end{align*}
For $I_4$ we estimate, using the Gagliardo--Nirenberg--Sobolev inequality,
\[
\begin{aligned}
	|I_4| \le  2\|\nabla\vvel\|_{L^3(\Omega)}\|\Stens\|_{L^6(\Omega)}\|\partial_t\Stens\|_{L^2(\Omega)}
	&\le C\|\nabla\vvel\|_{L^2(\Omega)}^\frac{1}{2}
	\norm{P\Delta\vvel}_{\LR{2}(\Omega)}^\frac{1}{2}
	\|\Stens\|_{H^1(\Omega)}\|\partial_t\Stens\|_{L^2(\Omega)}
	\\&\le C\|\nabla\vvel\|_{L^2(\Omega)}^2
	\|\Stens\|_{H^1(\Omega)}^4+ \frac{\mu}{4} 	\norm{P\Delta\vvel}_{\LR{2}(\Omega)}^2	+ \frac{1}{6}\|\partial_t\Stens\|_{L^2(\Omega)}^2.
\end{aligned}	
\]
The estimate for $I_5$ is immediate
\[
|I_5|\le C\eta^2\|\nabla\vvel\|_{L^2(\Omega)}^2+ \frac{1}{6}\|\partial_t\Stens\|_{L^2(\Omega)}^2.
\]
In combination, we deduce
\begin{equation}
\begin{aligned}\label{eq:S'}
	&\frac{\gamma}{2}\frac{\ddd}{\ddd t}\|\nabla\Stens\|_{L^2(\Omega)}^2
	+\frac{1}{2}\|\partial_t\Stens\|_{L^2(\Omega)}^2
+\frac{\ddd}{\ddd t}\mathcal{P}(\Stens)
\\
&\qquad\le 
C\|\nabla\vvel\|_{L^2(\Omega)}^2
\|\Stens\|_{H^1(\Omega)}^4
+ \frac{\mu}{2} 	\norm{P\Delta\vvel}_{\LR{2}(\Omega)}^2
+C\eta^2\|\nabla\vvel\|_{L^2(\Omega)}^2,
\end{aligned}
\end{equation}
where we used the regularity of $\mathcal{P}$ and $\Stens$ and the chain rule to rewrite the term involving the dissipation potential.

Adding up inequalities~\eqref{eq:v'} and~\eqref{eq:S'} gives
\begin{align*}
	&\frac{\ddd}{\ddd t}\bigg(\|\nabla\vvel\|_{L^2(\Omega)}^2
	+\frac{\gamma}{2}\|\nabla\Stens\|_{L^2(\Omega)}^2
	+\mathcal{P}(\Stens)\bigg)	
	+\frac{\mu}{2}\|P\Delta\vvel\|_{L^2(\Omega)}^2
	+\frac{1}{2}\|\partial_t\Stens\|_{L^2(\Omega)}^2
	\\&	\le C\mu^{-3}\|\nabla\vvel\|_{L^2(\Omega)}^6+
	C\mu^{-1}\eta^2\|\nabla \Stens\|_{L^2(\Omega)}^2
	+C\mu^{-1}\|\nabla\vvel\|_{L^2(\Omega)}^2
	\|\Stens\|_{H^1(\Omega)}^4
	+C\eta^2\|\nabla\vvel\|_{L^2(\Omega)}^2.
\end{align*}
Expanding
$\|\Stens\|_{H^1(\Omega)}^2=\|\nabla\Stens\|_{L^2(\Omega)}^2+\|\Stens\|_{L^2(\Omega)}^2$, we find that the function
$$\psi(t):=\|\nabla\vvel(t)\|_{L^2(\Omega)}^2
+\frac{\gamma}{2}\|\nabla\Stens(t)\|_{L^2(\Omega)}^2
+\mathcal{P}(\Stens(t))+1$$ satisfies the differential inequality
\[
	\frac{\ddd}{\ddd t}\psi(t)\le A\psi(t)^3,
\]
where
\[
	A:=	C\mu^{-3}+C\mu^{-1}\eta^2\gamma^{-1}
	+C\mu^{-1}\gamma^{-2}+C\mu^{-1}\cten^{2}+C\eta^2.
\]
Since, by hypothesis, $\psi(0)$ is finite, we may use a classical ODE comparison argument to deduce the
 existence of a time $$T=T(\|\vvel_0\|_{H^1(\Omega)},
 \| \Stens_0\|_{L^2(\Omega)},
 \sqrt{\gamma}\|\nabla\Stens_0\|_{L^2(\Omega)},\mathcal{P}(\Stens_0)
 ,\gamma^{-1},\mu^{-1},\eta)>0$$
 and a finite constant $\ctv$ as in~\eqref{eq:def.ct}
 such that $\psi(t)\le \ctv$ for all $t\in[0,T]$. 
Combined with the above estimates for $\mu\|P\Delta\vvel\|_{L^2(\Omega)}^2$ and $\|\partial_t\Stens\|_{L^2(\Omega)}^2$, this implies the bound~\eqref{eq:apri.s}.
\end{proof}

When $\potential\equiv0$, the evolution law for the tensorial component implies that any strong solution $(\vvel,\Se)$ with the regularity~\eqref{eq:149} further satisfies $\Se \in L^2(0,T; H^2(\Omega)^{3\times3})$. The following remark shows that this conclusion continues to hold true for nonsmooth potentials $\potential$ given by an integral functional.

\begin{remark}[$H^2$ regularity for $\Se$]\label{rem:addreg}

Suppose that the dissipation potential $\potential$ takes the form of an integral  $\potential(\Stens)=\int_\Omega\mathfrak{P}(\Stens(x))\dx$ for a
 lower semicontinuous, convex function $\mathfrak{P}:\R^{3\times 3}_{\delta}\to[0,\infty]$ with $\mathfrak{P}(\zerotens)=0$.
Then, under the hypotheses of Theorem~\ref{thm:strongsol}, strong solutions $(\vvel,\Se)$ satisfying~\eqref{eq:149} enjoy 
the additional regularity
\begin{align}\label{eq:H2reg}
	\Se \in L^2(0,T; H^2(\Omega)^{3\times 3}) 
\end{align}
and fulfill the differential inclusion~\eqref{eqvis2} in the $\LRdev{2}(\Omega)$ sense:
\[
\mathbb{G}\coloneqq - \bp{\t \Se + (\f v \cdot \nabla) \Se + \Se (\nabla \f v )_{\skw} -(\nabla \f v)_{\skw} \Se-\gamma\Delta\Se-\eta(\nabla \f v)_{\sym} }
\in L^2(0,T;\LRdev{2}(\Omega))
\]
with $\mathbb{G}(x,t)\in\partial\mathfrak{P}(\Se(x,t))$ for a.a.~$(x,t)\in\OmT$.

Let us first provide the formal argument demonstrating this assertion.
The estimate
\[
\begin{aligned}
&\left \| \eta(\nabla \f v)_{\sym} 
- (\f v \cdot\nabla) \Se - (\Se (\nabla \f v)_{\skw}- (\nabla \f v)_{\skw} \Stens) \right 
\|_{L^2(\Omega\times (0,T))}
\\
&\quad \leq \eta \| \nabla \f v \|_{L^2(\Omega\times (0,T) )} + \| \f v \| _{L^2(0,T;L^\infty(\Omega))} \| \Se \|_{L^\infty(0,T;H^1(\Omega))} + 2c \| \Se \|_{L^\infty(0,T;L^6(\Omega))} \| \f v \|_{L^2(0,T;H^2(\Omega))} \,
\end{aligned}
\]
combined with~\eqref{eq:150} and~\eqref{eq:apri.s} shows that the term 
$$\mathbb{F}:=\eta(\nabla \f v)_{\sym} -\t \Se 
- (\f v \cdot\nabla) \Se - (\Se (\nabla \f v)_{\skw}- (\nabla \f v)_{\skw} \Stens)$$
is controlled in $L^2(0,T;L^2(\Omega)^{3\times 3})$, whence
\begin{align}
 - \gamma \Delta \Se + \mathbb{G} = \mathbb{F} \in  L^2(0,T;\LRdev{2}(\Omega)).\label{eqrem}
\end{align}
Assuming for the moment that $\mathfrak{P}\in C^2$ and  testing~\eqref{eqrem} with $\D\mathfrak{P}(\Se)$ ($=\mathbb{G}$) yields
\begin{multline*}
 \int_0^T \int_\Omega  \gamma\nabla \Se\dreidots  \D^2\mathfrak{P}(\Se)\dreidots \nabla \Se  + |\D \mathfrak{P} (\Se) | ^2 \de x \de t 
	= \int_0^T\int_\Omega \mathbb{F}:  \D\mathfrak{P} (\Se) \de x \de t \\\leq \frac{1}{2}\|\D \mathfrak{P}(\Se)\|_{L^2(0,T;L^2(\Omega))} ^2 + \frac{1}{2}\| \mathbb{F}\|_{L^2(0,T;L^2(\Omega))}^2 \,,
\end{multline*}
where the first term on the left-hand side is non-negative thanks to the positive semidefiniteness of the Hesse form of the convex function $\mathfrak{P}$. 
We thus deduce the \textit{a priori} bound
\begin{align}\label{eq:H2.S}
	\|\D \mathfrak{P}(\Se)\|_{L^2(0,T;L^2(\Omega))} \le  \| \mathbb{F}\|_{L^2(0,T;L^2(\Omega))},
\end{align}
which further yields
$	\|\gamma\Delta\Se\|_{L^2(0,T;L^2(\Omega))} \le  C\| \mathbb{F}\|_{L^2(0,T;L^2(\Omega))}.$

To make the above reasoning rigorous, we may argue as follows. Following the proof of Theorem~\ref{thm:strongsol}, we first construct  local-in-time strong solutions using the regularized functions $\mathfrak{P}_{\ve,\lambda}:=\rho_\lambda\ast\mathfrak{P}_\ve$ with $\lambda,\ve\in(0,1]$.
Here, $\mathfrak{P}_\ve$ denotes the Moreau envelope of the nonsmooth function 
$\mathfrak{P}$, that is, 
\[
	\mathfrak{P}_\ve(\mathbb{T}) := \inf_{\mathbb{T'}\in\R^{3\times3}_\delta}
	\bigg(\mathfrak{P}(\mathbb{T'})+\frac{1}{2\ve}|\mathbb{T}-\mathbb{T'}|^2\bigg),
\]
which by construction satisfies the pointwise bound $\mathfrak{P}_\ve(\mathbb{T})\le \frac{1}{2\ve}|\mathbb{T}|^2$, while $\rho_\lambda$ denotes a standard (non-negative) mollifier. The functions $\mathfrak{P}_{\ve,\lambda}$ are $C^2$ and convex and satisfy the $\lambda$-uniform bound
\begin{equation}\label{eq:ptw.bd}
	0\le\mathfrak{P}_{\ve,\lambda}(\mathbb{T})\le \frac{C}{\ve}\left(|\mathbb{T}|^2+1\right)\text{ for all }\mathbb{T}\in \R^{3\times 3}_{\delta},
\end{equation}
whenever $\lambda\in(0,1].$
 (The fact that possibly $\mathfrak{P}_{\ve,\lambda}(0)\not=0$ does not affect the construction of solutions.) At the level of the strong solutions associated with $\mathfrak{P}_{\ve,\lambda}$, estimate~\eqref{eq:H2.S} and the $L^2$ bound for $\gamma\Delta\Se$ can be derived rigorously. The control~\eqref{eq:ptw.bd}  and the \textit{a priori} bounds allow to pass to the limit $\lambda\to0$ in the strong formulation~\eqref{eq:100} for all $\mathbb{T}\in \LRdev{2}(\Om)$, where the term $\gamma\int_\Om\nabla\Stens(t)\dreidots\bp{\nabla(\Stens(t)-\mathbb{T})}\dx$ is to be replaced by $-\gamma\int_\Om\Delta\Stens(t):\bp{\Stens(t)-\mathbb{T}}\dx$.
 Finally, in the limit $\ve\to0$, we obtain a strong solution on $(0,T)$ with the additional regularity~\eqref{eq:H2reg} satisfying the differential inclusion~\eqref{eqvis2} in the $\LRdev{2}$ sense.
 Since our weak-strong uniqueness principle (Theorem~\ref{thm:weakstrong} below) implies that strong solutions are unique, this shows that already the strong solutions obtained in Theorem~\ref{thm:strongsol} must have these regularity properties if the dissipation potential is in integral form.
\end{remark}

\subsection{Relative energy inequality and weak-strong stability}

The aim of this subsection is to establish the following weak-strong stability result,
which compares generalized solutions $(\vvel,\Stens)$
with strong solutions $\np{\tvvel,\tStens}$ to \eqref{eqvis}
in terms of the relative energy $\mathcal{R} (\vvel, \Stens| \tvvel,\tStens)$
defined in \eqref{eq:relen.def}.
It implies the uniqueness of strong solutions in the class of generalized solutions.

\begin{theorem}[Weak-strong stability]\label{thm:weakstrong}
Let $(\vv,\tS)$ be a strong solution according to Def.~\ref{def:strong.vS}
with initial data $(\vv_0,\tS_0)$.
Choose $p,q,r,s\in(2,\infty)$ satisfying \eqref{serrin}
such that 
$\tvvel\in\LRloc{s}([0,T);\LR{r}(\Omega))$,
$\tStens\in\LRloc{q}([0,T);\LR{p}(\Omega))$.
Then every generalized solution $(\f v , \Se)\in \Xb$ emanating from initial data $(\f v_0 , \Se_0)$
obeys the estimate
\begin{equation}\label{est:stability}
\begin{aligned}
	\mathcal{R} (\f v(t) , \Se(t)| \vv(t),\tS(t))
	+\int_0^t\Bp{\frac{\mu}{2} \| \nabla \f v - \nabla \vv\|_{L^2(\Omega)}^2
	&+ \frac{\gamma}{2} \| \nabla \Se-\nabla \tS \|_{L^2(\Omega)}^2} 
	\e^{\int_s^t \mathcal{K}(\vv,\tS) \de \tau}\,\ds
	\\
	&\qquad\quad\leq \mathcal{R} (\f v_0 , \Se_0| \vv_0,\tS_0)  \e^{\int_0^t \mathcal{K}(\vv,\tS) \de s } \,
\end{aligned}
\end{equation}
for a.e.~$t\in(0,T)$, where 
\begin{equation}\label{eq:Kstability}
\mathcal{K}(\vv,\tS)= C \left (\| \vv \|_{L^{r}(\Omega)}^s+\| \tS\|_{L^{p}(\Omega)}^q+\| \tS\|_{L^{p}(\Omega)}^2 \right )
\end{equation}
for a constant $C=C(\Omega,p,q,r,s,\mu,\gamma)>0$.
In particular, if the initial data $(\f v_0 , \Se_0)$ and $(\vv_0,\tS_0)$ coincide, then
$\vv \equiv \f v$ and $\tS=\Se$.
\end{theorem}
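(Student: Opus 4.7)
The plan is to derive~\eqref{est:stability} from the relative-energy inequality stated in Proposition~\ref{prop:relen}, i.e.~from~\eqref{relen.intro}, by choosing the strong solution $(\vv,\tS)$ as the test-function pair. The Serrin-type integrability hypotheses on $\vv$ and $\tS$ are precisely those defining the class $\Yb$ in~\eqref{eq:testfctspace}, so $(\vv,\tS)$ is admissible as a test pair. Two things then need to be verified: first, that for a strong test pair the ``residual'' contribution $\mathcal F(\f v,\Se;\vv,\tS)$ on the left-hand side of~\eqref{relen.intro} is non-negative and may be dropped; second, that with the choice~\eqref{eq:Kstability} of $\mathcal K$ the remaining quantity $\mathcal W^{(\mathcal K)}$ dominates the target dissipation $\tfrac{\mu}{2}\|\nabla(\f v-\vv)\|_{\LR2(\Omega)}^2+\tfrac{\gamma}{2}\|\nabla(\Se-\tS)\|_{\LR2(\Omega)}^2$.

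For the first point, the velocity part of $\mathcal F$ pairs $\f v-\vv$ against the residual of~\eqref{eqvis1} at $\vv$, which vanishes since $\vv$ solves~\eqref{eqvis1} weakly; the tensorial part, by Remark~\ref{rem:addreg} or equivalently by~\eqref{eq:100} applied with $\mathbb T=\Se(t)$, reduces to $\potential(\Se)-\potential(\tS)-\langle\mathbb G,\Se-\tS\rangle$ for some $\mathbb G\in\partial\potential(\tS)$, which is non-negative by the definition of the convex subdifferential. For the second point, $\mathcal W^{(\mathcal K)}$ consists of the full relative dissipation $\mu\|\nabla(\f v-\vv)\|_{\LR2}^2+\gamma\|\nabla(\Se-\tS)\|_{\LR2}^2$ minus $\mathcal K\,\mathcal R(\f v,\Se|\vv,\tS)$ plus quadratic cross terms originating from the convective nonlinearity and from the Zaremba--Jaumann transport. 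After a reorganisation that uses the skew symmetry of $(\nabla\cdot)_\skw$ together with the cyclic property of the trace, each such cross term takes the schematic form $\int_\Omega|a|\,|\nabla b|\,|c|\dx$ with $a,b\in\{\f v-\vv,\Se-\tS\}$ and $c\in\{\vv,\tS\}$. Lemma~\ref{lem:ineq} then bounds it by
\[
\delta\bp{\|\nabla(\f v-\vv)\|_{\LR2}^2+\|\nabla(\Se-\tS)\|_{\LR2}^2}+C_\delta\bp{\|\vv\|_{\LR r}^s+\|\tS\|_{\LR p}^q+\|\tS\|_{\LR p}^2}\mathcal R(\f v,\Se|\vv,\tS).
\]
Choosing $\delta\le\min\{\mu,\gamma\}/8$ absorbs the gradient remainders into one half of the full dissipation, while the $\mathcal R$-remainder is cancelled by the contribution of $\mathcal K$ from~\eqref{eq:Kstability}. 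This reduces~\eqref{relen.intro} to~\eqref{est:stability}.

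The main technical obstacle is the careful bookkeeping of the Zaremba--Jaumann cross terms: one has to recombine the four-term expression $\Se(\nabla\f v)_\skw-(\nabla\f v)_\skw\Se-\tS(\nabla\vv)_\skw+(\nabla\vv)_\skw\tS$ so that every summand exposes either a gradient of a difference $\nabla(\f v-\vv)$ or a factor $\Se-\tS$; the cyclic-trace identity $\int_\Omega\mathbb T(\nabla\f v)_\skw{:}\mathbb T\dx=0$ is what makes the dangerous terms (those that would otherwise produce factors of $\|\Se\|$ or $\|\f v\|$ in $\mathcal K$) drop out, and only then does Lemma~\ref{lem:ineq} yield an $\mathcal K$ depending solely on the strong-solution norms of $(\vv,\tS)$. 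The uniqueness conclusion is immediate from~\eqref{est:stability}: if $(\f v_0,\Se_0)=(\vv_0,\tS_0)$, the right-hand side vanishes, hence $\mathcal R(\f v,\Se|\vv,\tS)\equiv 0$ on the interval of existence of the strong solution, and $\f v\equiv\vv$, $\Se\equiv\tS$ almost everywhere.
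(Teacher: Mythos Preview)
Your approach is the same as the paper's and is essentially correct: invoke Proposition~\ref{prop:relen}, use the strong-solution properties to show that $\langle\mathcal{A}^{(1)}(\vv,\tS),\f v-\vv\rangle=0$ and $\mathcal{P}(\Se)-\mathcal{P}(\tS)+\langle\mathcal{A}^{(2)}(\vv,\tS),\Se-\tS\rangle\ge0$, and then estimate the cross terms in $\mathcal{W}^{(\mathcal K)}$ via Lemma~\ref{lem:ineq} so that half of the relative dissipation survives. Two small corrections are in order.

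First, the claim that the Serrin hypotheses ``are precisely those defining the class $\Yb$'' is not quite right: membership in $\YT^s$ also requires $\tvvel\in\HSRloc{1}([0,T);(\HSRNsigma{1}(\Omega))^*)$, which is \emph{not} part of Definition~\ref{def:strong.vS}. The paper supplies this step explicitly at the start of its proof, using the weak formulation~\eqref{eq:weaksol.v} together with the Serrin bound to control $\pt\tvvel$ in the dual space. You should include this verification.

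Second, your description of $\mathcal{W}^{(\mathcal K)}$ has the signs reversed: by~\eqref{W} it is the full relative dissipation \emph{minus} the cross terms \emph{plus} $\mathcal{K}\,\mathcal{R}$, not the other way around. With the signs as you wrote them the absorption argument would fail. Your subsequent sentence (``the $\mathcal R$-remainder is cancelled by the contribution of $\mathcal K$'') shows you have the correct mechanism in mind, so this is just a slip, but it should be fixed. Also note that the ``recombination'' of the Zaremba--Jaumann terms you describe is already carried out inside Proposition~\ref{prop:relen}; at this stage the cross terms already appear in the difference form $(\Se-\tS)(\nabla(\f v-\vv))_{\skw}{:}\tS$ etc., and one applies Lemma~\ref{lem:ineq} directly.
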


In the above formulation, we call the function~$\mathcal{K}$ the regularity weight since it measures the minimal regularity  of the function $(\vv,\tS)$ such that both sides of the relative energy inequality~\eqref{est:stability} remain finite. 

We will deduce Theorem \ref{thm:weakstrong}
from a suitable relative energy inequality
that compares a generalized solution with any sufficiently smooth function.
For its formulation, let us (formally) introduce
the operator $\mathcal{A}
=\big(\mathcal{A}^{(1)},\mathcal{A}^{(2)}\big)$ by
\begin{subequations}
\begin{align}
\label{eq:defA1}
&\langle	\mathcal{A}^{(1)} (\vv,\tS), \Phi \rangle:= 
	\langle \t\vv, \Phi\rangle
	+\int_\Om ( \vv\cdot \nabla \vv)  \cdot \Phi \;\ddd x
	+ \left( \eta\tS + 2 \mu (\nabla \vv)_{\sym} \right ):\nabla \Phi  \;\ddd x
	- \langle \f f, \Phi \rangle,
\\\label{eq:defA2}
&\begin{aligned}
\langle	\mathcal{A}^{(2)} (\vv,\tS),\mathbb{T} \rangle
:= 
\langle \t \tS, \mathbb{T} \rangle
+\int_\Om\big(\vv\cdot\nabla\tS + \tS (\nabla \vv)_{\skw} &- (\nabla \vv)_{\skw} \tS  \big):\mathbb{T} 
\\
&
+  \gamma \nabla  \tS\dreidots\nabla\mathbb{T}-\eta(\nabla \vv)_{\sym}:\mathbb{T}\;\ddd x.
\end{aligned}
\end{align}
\end{subequations}
Note that the interpolation inequality of Lemma~\ref{lem:ineq} shows that 
\[
\forall (\f v , \Se)\in \Xb, \, (\vv,\tS)\in\Yb: \quad
	\left \langle \mathcal{A} (\vv,\tS) ,\begin{pmatrix}
		\f v \\ \Se
	\end{pmatrix}\right \rangle  \in \LRloc{1}([0,T)),
\]
where $\Xb$ and $\Yb$ were introduced in \eqref{eq:weaksolspace} and \eqref{eq:testfctspace}.

Now we can state the above-mentioned relative energy inequality. 
Actually, we derive a family of relative energy inequalities 
where the regularity weight $\mathcal{K}$ is not fixed in advance.
Observe that the choice of $\mathcal{K}$ influences the class of
admissible test functions,
which must belong to the set $D_{\mathcal K}$ defined in \eqref{eq:DK.def}.

\begin{proposition}\label{prop:relen}
Let $(\f v , \Se)\in \Xb$ be a generalized solution according to Definition~\ref{def:gensol},
and let $\mathcal{K}:\statespace\to[0,\infty]$ be a given functional.
 Then $(\f v , \Se)$ fulfills the relative energy inequality 
\begin{multline}
\mathcal{R} (\f v(t) , \Se(t)| \vv(t),\tS(t)) \\
+\int_0^t  \left (\mathcal{W}^{(\mathcal{K})} (\f v , \Se| \vv,\tS ) + \mathcal{P}(\Se)-\mathcal{P}(\tS) + \left \langle \mathcal{A} (\vv,\tS) ,\begin{pmatrix}
\f v - \vv \\ \Se-\tS 
\end{pmatrix}\right \rangle \right ) \e^{\int_s^t \mathcal{K}(\vv,\tS) \de \tau} \de s\\
\leq \mathcal{R} (\f v_0 , \Se_0| \vv(0),\tS(0))  \e^{\int_0^t \mathcal{K}(\vv,\tS) \de s }  \, \label{relen}
\end{multline}
for
a.e.~$t\in(0,T)$ and all $(\vv,\tS)\in D_\mathcal{K}\cap\Yb$, 
where $\mathcal{W}:=\mathcal{W}^{(\mathcal{K})}$
denotes the relative dissipation-like quantity
  \begin{equation}
  \begin{split}
    \mathcal{W}^{(\mathcal{K})} (\f v,\Se|\vv,\tS) :={}& \mu \| \nabla \f v - \nabla \vv\|_{L^2(\Omega)}^2+ \gamma \| \nabla \Se-\nabla \tS \|_{L^2(\Omega)}^2
  \\
&   -\int_\Om (( \f v - \vv)\cdot\nabla (\f v - \vv)) \cdot \vv
+ (( \f v - \vv)\cdot\nabla (\Se-\tS)):\tS \;\ddd x
\\
&- \int_\Om \left ((\Se-\tS)(\nabla \f v -\nabla \vv )_{\skw} - (\nabla \f v - \nabla \vv )_{\skw} (\Se-\tS ) \right): \tS \;\ddd x
\\
& + \mathcal{K}(\vv,\tS) \mathcal{R}(\f v, \Se|\vv, \tS),
  \end{split}
\label{W}
  \end{equation}
which depends on the regularity weight $\mathcal{K}$. 
\end{proposition}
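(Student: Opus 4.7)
My proof plan is to combine, term by term, three ingredients: the partial energy inequality \eqref{est:enineq.v} for the velocity, the evolutionary variational inequality \eqref{est:varin0T} tested against $\mathbb{T}\equiv\tS(s)$, and identities obtained from the regularity of the test couple $(\vv,\tS)\in\Yb$ via the chain rule. The starting point is the decomposition
\[
\mathcal{R}(\f v(t),\Se(t)|\vv(t),\tS(t))=\tfrac12\|\f v(t)\|_{L^2(\Omega)}^2-(\f v(t),\vv(t))+\tfrac12\|\vv(t)\|_{L^2(\Omega)}^2+\tfrac12\|\Se(t)-\tS(t)\|_{L^2(\Omega)}^2,
\]
which reduces the proof to controlling each of the four summands. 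The last one is delivered directly by \eqref{est:varin0T}: it already produces the diffusive term $\gamma\int\nabla\Se:\nabla(\Se{-}\tS)\dx$, the convective/Jaumann contributions tested against $\tS$, the coupling $-\eta(\nabla\f v)_{\sym}:(\Se{-}\tS)$, and the nonsmooth potential difference $\mathcal{P}(\Se)-\mathcal{P}(\tS)$.

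The summand $\tfrac12\|\vv(t)\|_{L^2(\Omega)}^2$ is handled by the chain rule, which is legitimate since $\vv\in\YT^s$ makes $\langle\partial_t\vv,\vv\rangle$ meaningful. For the cross term $(\f v(t),\vv(t))$ I would extend \eqref{eq:weaksol.v} to admissible test fields $\vv\in\YT^s$ by a density argument; the Serrin condition \eqref{serrin} is exactly what guarantees that $(\f v\cdot\nabla)\f v\cdot\vv$ and the other bilinear terms coming from \eqref{eq:defA1} are integrable on $\Omega\times(0,t)$ when paired with a generalized solution. This yields an identity expressing $(\f v(t),\vv(t))-(\f v_0,\vv(0))$ as a time-integral of $\langle\partial_t\vv,\f v\rangle$, the convective term $((\f v\cdot\nabla)\f v)\cdot\vv$, the cross terms $\mu\nabla\f v:\nabla\vv$ and $\eta\Se:\nabla\vv$, and the forcing $\langle\f f,\vv\rangle$.

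Summing the four contributions and doing algebra then yields inequality \eqref{relen} \emph{without} the $\mathcal{K}$-term. The main reorganization is purely bilinear and trilinear: combining $-\mu\|\nabla\f v\|_{L^2}^2+\mu\int\nabla\f v:\nabla\vv\dx$ with the viscous piece of $\langle\mathcal{A}^{(1)}(\vv,\tS),\f v{-}\vv\rangle$ produces $-\mu\|\nabla(\f v{-}\vv)\|_{L^2}^2$; analogously one extracts $-\gamma\|\nabla(\Se{-}\tS)\|_{L^2}^2$ from the $\gamma$-terms and the diffusive part of $\mathcal{A}^{(2)}$. Using solenoidality of $\f v,\vv$ and the cancellation $\int(\f u\cdot\nabla)\f w\cdot\f w\dx=0$, the triple product $\int(\f v\cdot\nabla)\f v\cdot\vv\dx$ rewrites as $\int((\f v{-}\vv)\cdot\nabla(\f v{-}\vv))\cdot\vv\dx$ modulo pieces absorbed into $\langle\mathcal{A}^{(1)}(\vv,\tS),\f v{-}\vv\rangle$. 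An exactly parallel rearrangement for the stress convection and the Jaumann commutator, exploiting the symmetry of $\Se,\tS$ and the skew-symmetry of $(\nabla(\f v{-}\vv))_{\skw}$, isolates the two remaining cubic corrections appearing in \eqref{W}.

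Finally, the regularity weight $\mathcal{K}$ is incorporated by a Gr\"onwall/integrating-factor argument. The inequality obtained so far is recast in the distributional form of Lemma~\ref{lem:invar}; testing against $\phi(s)\,e^{-\int_0^s\mathcal{K}(\vv,\tS)\,d\tau}$ with $\phi\in\tilde{C}([0,T])$ generates the extra term $\mathcal{K}\,\mathcal{R}$ on the dissipative side (thus completing $\mathcal{W}^{(0)}$ to $\mathcal{W}^{(\mathcal{K})}$) and produces the exponential factors $e^{\int_s^t\mathcal{K}\,d\tau}$ and $e^{\int_0^t\mathcal{K}\,ds}$ displayed in \eqref{relen}; the hypothesis $(\vv,\tS)\in D_\mathcal{K}$ ensures the integrating factor is locally bounded. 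I expect the main obstacle to be the bookkeeping in the third paragraph: one must identify precisely which trilinear pieces vanish by divergence-freeness, which fit into the linear pairing with $\mathcal{A}(\vv,\tS)$, and which are the three genuinely quadratic-in-$(\f v{-}\vv,\Se{-}\tS)$ cubic terms that make up $\mathcal{W}$.
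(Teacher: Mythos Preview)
Your proposal is correct and follows essentially the same strategy as the paper: combine the partial energy inequality \eqref{est:enineq.v}, the evolutionary variational inequality \eqref{est:varin0T}, and the weak Navier--Stokes formulation \eqref{eq:weaksol.v} tested against $\vv$, rearrange the bilinear/trilinear terms to extract $\mathcal{W}^{(0)}$ and $\langle\mathcal{A}(\vv,\tS),\cdot\rangle$, and then introduce the regularity weight via the integrating factor $\e^{-\int_0^s\mathcal{K}\,\de\tau}$ using Lemma~\ref{lem:invar}. The only organizational difference is that the paper passes to the weak-in-time form of Lemma~\ref{lem:invar} \emph{before} summing the three ingredients (so the entire algebra is done against a test function $\phi\in\tilde C([0,T])$), whereas you first derive the pointwise-in-$t$ inequality with $\mathcal{K}\equiv 0$ and only afterwards invoke Lemma~\ref{lem:invar}; this is immaterial. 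One small point you do not mention but should: the case $\int_0^t\mathcal{P}(\tS)\,\de s=+\infty$ must be disposed of separately (the inequality is then trivially true), since otherwise the term $-\mathcal{P}(\tS)$ in \eqref{relen} is not integrable.
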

Observe that by Lemma~\ref{lem:ineq} the quantity $\mathcal{W}^{(\mathcal{K})}$ satisfies  
$\mathcal{W}^{(\mathcal{K})} (\f v,\Se|\vv,\tS)\in L^1_\loc([0,T))$
for all $(\f v,\Se),(\vv,\tS)$ as above.

\begin{proof}[Proof of Proposition~\ref{prop:relen}] Let $(\vv,\tS)\in D_\mathcal{K}\cap\Yb$ and $t\in(0,T)$.  The reasoning below is valid for a.e.\ $t\in(0,T)$.

	If $\int_0^t\mathcal{P}(\tS)\dtau=+\infty$, the asserted inequality is trivially satisfied.	Thus, we may henceforth assume that $\mathcal{P}(\tS)\in L^1(0,t)$. 		
Since $(\f v , \Se)$ is a generalized solution, it fulfills \eqref{est:enineq.v} and \eqref{est:varin0T}.
Given $\phi\in \tilde{\C} ([0,t]),$
we transform these two inequalities according to Lemma~\ref{lem:invar} into their weak formulation in time and add the weak form~\eqref{eq:weaksol.v} of the Navier--Stokes equations with the test function $\Phi = -\phi \vv$
(which is admissible as a test function due to a classical approximation argument using Lemma \ref{lem:ineq}) to obtain
\begin{multline}
-\int_0^t \phi' \left (\frac{1}{2} \| \Se - \tS \|_{L^2(\Omega)}^2 + \frac{1}{2}\| \f v \|_{L^2(\Omega)}^2 - \int_\Omega \f v \cdot \vv \dx \right ) \ddd\tau
\\
+ \int_0^t \phi \left ( \int_\Omega  \mu \nabla \f v : \left ( \nabla \f v - \nabla \vv \right ) + \gamma \nabla \Se \dreidots \left ( \nabla \Se - \nabla \tS \right ) \dx + \potential (\Se) - \potential (\tS) \right )\ddd\tau
\\
+ \int_0^t\phi  \int_\Omega -(\f v\cdot\nabla) \f v \cdot \vv + \eta \Se :\left  (\nabla \f v - \nabla \vv \right ) - (\f v \cdot \nabla) \Se:\tS - \left ( \Se (\nabla \f v )_{\skw} - (\nabla \f v)_{\skw} \Se  \right ): \tS  \dx \,\ddd\tau \\\
+\int_0^t \phi \left ( \langle\pt\vv , \f v\rangle - \int_\Omega \eta \sy{v} : ( \Se-\tS) \dx + \langle \f f , \f v - \vv \rangle\right )  \ddd\tau
\\
-  \left ( \frac{1}{2}\| \Se_0-\tS(0) \|_{L^2(\Omega)}^2 + \frac{1}{2}\| \f v_0\|_{L^2(\Omega)}^2 - \int_\Omega \f v_0 \cdot \vv(0) \dx \right ) 
\leq 0 \,.\label{weaksum}
\end{multline}
Observing that
\begin{align}
\begin{split}
&\left\langle \mathcal{A}(\vv,\tS), \begin{pmatrix}
\f v -\vv   \\ \Se -\tS 
\end{pmatrix}\right \rangle \\
&\ 
= \langle\t \vv ,  \f v\rangle +\int_ \Omega- \frac{1}{2}\t |\vv|^2  + (\vv\cdot \nabla )\vv \cdot \f v   + \left (\eta  \tS + 2 \mu (\nabla \vv)_{\sym} \right ) : (\nabla \f v-\nabla \vv)   \dx - \langle \f f, \f v -\vv  \rangle  \\
&\qquad
+\int_\Omega \t \tS : (\Se-\tS)+ ( \vv \cdot \nabla) \tS: (\Se-\tS) + \left (\tS (\nabla \vv)_{\skw} - (\nabla \vv)_{\skw} \tS \right ) : (\Se-\tS)  \dx \\
&\qquad
+ \int_\Omega \gamma \nabla  \tS \dreidots \left ( \nabla \Se - \nabla \tS\right )-  \eta (\nabla \vv)_{\sym}: \left (\Se-\tS\right )  \dx
\,,
\end{split}\label{Atest}
\end{align}
we can rewrite inequality~\eqref{weaksum}, upon integration by parts in time, as
 \begin{align*}
 -\int_0^t\phi' \mathcal{R}(\f v , \Se| \vv,\tS) \dtau  &+\int_0^t\phi\left (  \mu \| \nabla \f v - \nabla \vv\|_{L^2(\Omega)}^2  + \gamma \| \nabla \Se-\nabla \tS \|_{L^2(\Omega)}^2 \right )\ddd\tau 
\\&- \int_0^t\phi \int_\Om  (( \f v - \vv ) \cdot \nabla)( \f v - \vv ) \cdot \vv +
(( \f v - \vv ) \cdot \nabla) (\Se-\tS): \tS \,\ddd x\ddd\tau 
\\&-\int_0^t\phi  \int_\Om\left ( (\Se-\tS)(\nabla \f v -\nabla \vv )_{\skw} - (\nabla \f v - \nabla \vv )_{\skw} (\Se-\tS ) \right ): \tS  \,\ddd x\,\ddd\tau
\\& +\int_0^t\phi \bigg(\potential (\Se)-\potential(\tS )+ \left \langle \mathcal{A}_\gamma(\vv,\tS) ,\begin{pmatrix}
\f v - \vv \\ \Se-\tS 
\end{pmatrix}\right \rangle \bigg) \,\ddd\tau
\\
&\leq \mathcal{R}(\f v_0 , \Se_0| \vv(0),\tS(0)) 
 \,,
 \end{align*}
 where we used the canceling of several terms (notably of the coupling term $\int_\Om\eta \sy{v} :  \Se \dx=\int_\Om\eta \nabla\f v :  \Se \dx$), the fact that $\f v$ and $\vv$ are solenoidal vector fields,
and the antisymmetry of $(\nabla \vv)_{\skw}$.
Choosing~$\phi(\tau)=\varphi(\tau)\e^{-\int_0^\tau\mathcal{K}(\vv,\tS)\de \hat\tau}$ and invoking Lemma~\ref{lem:invar} yields the asserted inequality~\eqref{relen}. 
\end{proof}

\begin{proof}[Proof of Theorem~\ref{thm:weakstrong}] 
	First, we show that the velocity field $\tvvel$
	possesses the additional regularity
	$\tvvel\in\HSRloc{1}([0,T);(\HSRNsigma{1}(\Omega))^\ast)$.
	In the same way as in the proof of Lemma \ref{lem:ineq}, we infer
	\[
	\begin{aligned}
	&\snormL{\int_0^{t}\!\!\!\int_\Omega\tvvel\cdot\grad\tvvel\cdot\Phi\dx\ds}
	=\snormL{\int_0^{t}\!\!\!\int_\Omega\tvvel\cdot\grad\Phi\cdot\tvvel\dx\ds}
	\\
	&\qquad\leq C \bp{ \| \nabla \tvvel \|_{\LR{2}(0,t;\LR{2}(\Omega))} ^2 
	+\|  \tvvel\| _{\LR{\infty}(0,t;\LR{2}(\Omega))}^2
	+ \| \tvvel \|_{\LR{s}(0,t;\LR{r}(\Omega))}^2 } 
	\| \nabla \Phi \| _{\LR{2}(0,t;\LR{2}(\Omega))}
	\end{aligned}
		\]
	for all $\Phi\in\CRcisigma\np{\Omega\times(0,t)}$ and all $t\in(0,T)$.
	Invoking the weak formulation \eqref{eq:weaksol.v},
	we deduce
	\[
	\begin{aligned}
		&\snormL{\int_0^{t}\!\!\!\int_\Omega
		\tvvel\cdot\pt\Phi\dx\ds}
		=\snormL{\int_0^{t}\!\!\!
		\int_\Omega\bp{(\tvvel\cdot\nabla)\tvvel\cdot\Phi
		+\eta \tStens:\grad\Phi
		+\mu\grad\tvvel:\grad\Phi
		}\dx\ds
		-\int_0^{t}\langle\vf,\Phi\rangle\ds}
		\\
		&\qquad\leq
		C \bp{ \| \nabla \tvvel \|_{\LR{2}(0,t;\LR{2}(\Omega))} ^2
		+\|  \tvvel\| _{\LR{\infty}(0,t;\LR{2}(\Omega))}^2 
		+ \| \tvvel \|_{\LR{s}(0,t;\LR{r}(\Omega))}^{2}
		\\
		&\qquad\qquad\quad
		+\norm{\tStens}_{\LR{2}(0,t;\LR{2}(\Omega))}
		+\norm{\tvvel}_{\LR{2}(0,t;\HSR{1}(\Omega))}
		+\norm{\vf}_{\LR{2}(0,t;\HSR{-1}(\Omega))} } 
		\norm{\Phi}_{\LR{2}(0,t;\HSR{1}(\Omega))}.
	\end{aligned}
	\]
	This shows $\tvvel\in\HSRloc{1}([0,T);(\HSRNsigma{1}(\Omega))^\ast)$.
	Hence we have $\np{\tvvel,\tStens}\in\Yb$,
	so that $\np{\vvel,\Stens}$, $\np{\tvvel,\tStens}$ satisfy 
	the relative energy inequality \eqref{relen}
	by Proposition~\ref{prop:relen}.
	
	For the derivation of \eqref{est:stability} from \eqref{relen}
	it suffices to show 
	that for a.e.\ $t\in(0,T)$ we have
	\begin{equation}\label{eq:104}
		\begin{aligned}
		\mathcal{W}^{(\mathcal{K})} (\f v , \Se| \vv,\tS )+\mathcal{P}(\Stens)-\mathcal{P}(\tS) 
		&+ \left \langle \mathcal{A}(\vv,\tS) ,\begin{pmatrix}
			\f v - \vv \\ \Se-\tS 
		\end{pmatrix}\right \rangle
		\\
		&\ge\frac{\mu}{2} \| \nabla \f v - \nabla \vv\|_{L^2(\Omega)}^2
	+ \frac{\gamma}{2} \| \nabla \Se-\nabla \tS \|_{L^2(\Omega)}^2
		\end{aligned}		
	\end{equation}
for $\mathcal K$ given by \eqref{eq:Kstability}.
	To verify inequality~\eqref{eq:104}, we first show, for a.e.~$t\in(0,T)$, the two relations
	\begin{align}
		\big\langle \mathcal{A}^{(1)}(\vv,\tS),	\f v - \vv \big\rangle &= 0,\label{eq:101}
		\\\mathcal{P}(\Stens)-\mathcal{P}(\tS) +\big\langle \mathcal{A}^{(2)}(\vv,\tS),	\Se-\tS \big\rangle &\ge 0.\label{eq:102}
	\end{align}
	Note that, as a consequence of the weak solution property of $(\vv,\tS)$ and the extra regularity of $\vv$, for a.e.~$t\in(0,T)$ we have the equality
	\begin{align*}
		 \mathcal{A}^{(1)}(\vv,\tS)
		 =\t \vv + \nabla\cdot(\vv\otimes \vv) - \nabla\cdot\left ( \tS + 2 \mu (\nabla \vv)_{\sym} \right )  - \f f = 0 \qquad \text{ in }(\HSRNsigma{1}(\Omega))^*,
	\end{align*}
	which immediately gives~\eqref{eq:101}.
	Inequality~\eqref{eq:102} follows upon choosing $\mathbb{T}=\Se(t)\in\HSRdev{1}(\Omega)$ as a test function in the strong variational inequality~\eqref{eq:100} satisfied by $(\vv,\tS)$.
	We are thus left to prove that $\mathcal{W}^{(\mathcal{K})}\ge\frac{\mu}{2} \| \nabla \f v - \nabla \vv\|_{L^2(\Omega)}^2
	+ \frac{\gamma}{2} \| \nabla \Se-\nabla \tS \|_{L^2(\Omega)}^2$ if $\mathcal K$ is given by \eqref{eq:Kstability} for some sufficiently large constant $C<\infty$.
	Using Lemma~\ref{lem:ineq}, we may estimate
	\begin{equation}
	\begin{aligned}
		&\left|\int_\Om (( \f v - \vv)\cdot\nabla) (\f v - \vv) \cdot \vv
		+ (( \f v - \vv)\cdot\nabla) (\Se-\tS):\tS \;\ddd x\right|
		\\
		&\quad
		\leq \frac{\mu }{4} \| \nabla \f v - \nabla \vv\|_{\LR{2}(\Omega)}^2 + \frac{\gamma}{4}\| \nabla \Se - \nabla \tS \|_{\LR{2}(\Omega)}^2 
		+ C_1 \left ( \| \vv \|_{L^r(\Omega)}^s + \| \tS\|_{L^p(\Omega)}^q\right ) 
		\mathcal{R}(\f v,\Se|\vv,\tS) \,,
		\label{hoelderest}
		\end{aligned}
	\end{equation}
	where $(r,s)$ and $(p,q)$ fulfill~\eqref{serrin}. 
	Similarly, we estimate the terms stemming from the Zaremba--Jaumann rate by H\"older's, Gagliardo--Nirenberg's, and Young's inequality as
	\begin{align*}
	\Big |\int_\Om& \left( (\Se-\tS)(\nabla \f v -\nabla \vv )_{\skw} - (\nabla \f v - \nabla \vv )_{\skw} (\Se-\tS ) \right): \tS \;\ddd x\Big |\\
		&\leq  C  \| \nabla \f v - \nabla \vv \|_{\LR{2}(\Omega)} \| \Se-\tS \|_{\LR{2p/(p-2)}(\Omega)} \| \tS\|_{L^p(\Omega)} 
		 \\&\leq  C 
		 \| \nabla \f v - \nabla \vv \|_{\LR{2}(\Omega)} \left (\| \Se-\tS \|_{\LR{2}(\Omega)}^{1-\alpha}  \| \nabla \Se-\nabla \tS \|_{\LR{2}(\Omega)}^\alpha \| \tS\|_{L^p(\Omega)} + \| \Se-\tS \|_{\LR{2}(\Omega)}  \| \tS\|_{L^p(\Omega)} \right )
		 \\
		& \leq  \frac{\mu }{4} \| \nabla \f v - \nabla \vv\|_{\LR{2}(\Omega)}^2 + \frac{\gamma}{4}\| \nabla \Se - \nabla \tS \|_{\LR{2}(\Omega)}^2 + C_2 \left ( \| \tS\|_{L^p(\Omega)}^q + \| \tS\|_{L^p(\Omega)}^2 \right ) \| \Se-\tS \|_{\LR{2}(\Omega)}^2 \,,
	\end{align*}
	where $\alpha=3/p$.
	Hence, \eqref{eq:104} holds for 
	$\mathcal K$ as in \eqref{eq:Kstability}
	with $C=\max\set{C_1,2C_2 }$.
\end{proof}

\section{Energy-variational solutions for vanishing stress diffusion}\label{sec:ev.sol}

In this section, we investigate problem~\eqref{eqvis.0},
that is, \eqref{eqvis} for $\gamma=0$, 
where stress diffusion is not present. 
In this case, the basic energy estimates no longer provide weak sequential compactness in $L^1_\loc(\overline\Om\times[0,T))$ for the nonlinear term $\Se (\nabla \f v )_{\skw} -(\nabla \f v)_{\skw} \Se$ coming from the Zaremba--Jaumann derivative.
Thus, we have to further extend the concept of generalized solutions introduced in Def.~\ref{def:gensol}.
Here, we use a concept in the spirit of~\cite{diss,maxdiss}, which is reminiscent of the \textit{dissipative} solutions introduced by P.-L.\ Lions in~\cite{Lions_incompressible_1996}.
In the literature, the term \textit{dissipative} is, however, employed for various kinds of paradigms,
and here we prefer the term \textit{energy-variational}
 since the basis of our solution concept is a relative energy inequality (similar to~\eqref{relen}), which can be interpreted as a variation of the energy-dissipation mechanism  with respect to functions in the domain $D_{\mathcal{K}}$
 of the regularity weight~$\mathcal{K}$. 
We show existence of energy-variational solutions and 
derive some important properties inherent in this solution concept.

\subsection{The concept of energy-variational solutions}
We introduce the notation
\[
	\Xbzero 
	\coloneqq \LHT\times  \LRloc{\infty}\np{[0,T);\LRdev{2}(\Omega)},
	\qquad 
	\Ybzero
	\coloneqq \bigcup_{s\in(2,\infty)}\YT^s\times Z_{T,0}, 
\]
where
\[
	Z_{T,0}
	\coloneqq 	\WSRloc{1}{1}([0,T);\LR{2}_\delta(\Omega))	\cap \LRloc{2}([0,T);W^{1,3}(\Omega)^{3\times3}\cap L^{\infty}(\Omega)^{3\times 3}).
\]
As explained above, the notion of energy-variational solutions to \eqref{eqvis.0} is based on 
an adaptation of the relative energy inequality \eqref{relen} to the case $\gamma=0$.
To emphasize the $\gamma$-dependence in the quantities appearing in~\eqref{relen}, 
we set 
$\mathcal{A}^{(2)}_\gamma=\mathcal{A}^{(2)}$,
$\mathcal{A}_\gamma=\mathcal{A}=\bp{\mathcal{A}^{(1)},\mathcal{A}^{(2)}_\gamma}$ for $\gamma\geq 0$,
and
$\mathcal{W}_\gamma=\mathcal{W}^{(\mathcal{K})}_\gamma=\mathcal{W}^{(\mathcal{K})}$
for $\gamma>0$,
where $\mathcal{A}^{(2)}$
and $\mathcal{W}^{(\mathcal{K})}$
are given in \eqref{eq:defA2} and \eqref{W}, respectively.
Observe that in the limiting case $\gamma=0$ 
the quantity $\mathcal{W}^{(\mathcal{K})}$ is not well defined by \eqref{W}
for general $\np{\vvel,\Stens}\in\Xbzero$ and $\np{\tvvel,\tStens}\in\Ybzero$ 
since $\grad\Stens$ appears in the advective term.
Therefore, we formally integrate by parts and define $\mathcal{W}_0=\mathcal{W}^{(\mathcal{K})}_0$ by
\begin{equation}
	\begin{split}
		\mathcal{W}^{(\mathcal{K})}_0 (\f v,\Se|\vv,\tS) :={}& \mu \| \nabla \f v - \nabla \vv\|_{L^2(\Omega)}^2
		\\
		&   -\int_\Om (( \f v - \vv)\cdot\nabla (\f v - \vv)) \cdot \vv
		- (\Se-\tS)\otimes( \f v - \vv)\dreidots\nabla\tS \;\ddd x
		\\
		&- \int_\Om \left ((\Se-\tS)(\nabla \f v -\nabla \vv )_{\skw} - (\nabla \f v - \nabla \vv )_{\skw} (\Se-\tS ) \right): \tS \;\ddd x
		\\
		& + \mathcal{K}(\vv,\tS) \mathcal{R}(\f v, \Se|\vv, \tS).
	\end{split}
	\label{W.0}
\end{equation}
We now define energy-variational solutions by imposing a relative energy inequality 
analogous to~\eqref{relen}. 
Recall that the regularity weight $\mathcal K$ was not fixed in \eqref{relen},
which is reflected in the following solution concept.
We emphasize though that the definition to follow is only meaningful for sufficiently ``nice'' weights $\mathcal{K}$. 

\begin{definition}\label{def:envar}
	Let $\mathcal{K}:\statespace\to [0,\infty]$ with $\mathcal{K}( \f 0, \mathbb{O})= 0$.
	A tuple $(\f v , \Se)\in \Xbzero$ is called an energy-variational solution to~\eqref{eqvis.0} of type $\mathcal{K}$
if 
		the relative energy inequality 
		\begin{multline}
			\mathcal{R} (\f v(t) , \Se(t)| \vv(t),\tS(t)) 
			\\+\int_0^t  \left (\mathcal{W}_0^{(\mathcal K)} (\f v , \Se| \vv,\tS ) + \mathcal{P}(\Se)-\mathcal{P}(\tS)+ \left \langle \mathcal{A}_0 (\vv,\tS) ,\begin{pmatrix}
				\f v - \vv \\ \Se-\tS 
			\end{pmatrix}\right \rangle \right ) \e^{\int_s^t \mathcal{K}(\vv,\tS) \de \tau} \de s\\  \leq \mathcal{R} (\f v_0 , \Se_0| \vv(0),\tS(0))  \e^{\int_0^t \mathcal{K}(\vv,\tS) \de s }  \, \label{relen.}
		\end{multline}
	is fulfilled for a.e.~$t\in (0,T)$ and all $(\vv,\tS)\in D_\mathcal{K}\cap\Ybzero$. 
	As before, we call the function~$\mathcal{K}$ the regularity weight. 
\end{definition}
\begin{remark}[Choice of the regularity weight]
While the assumption $\mathcal{K}( \f 0, \mathbb{O})= 0$ is not needed to define such a solution concept,
it is a natural hypothesis which ensures the classical energy inequality, \textit{i.e.,} inequality~\eqref{est:enineq} with $\gamma=0$. Note that this inequality directly follows by letting 
 $(\vv,\tS) \equiv ( \f 0, \mathbb{O})$ in~\eqref{relen.}, which is admissible when $\mathcal{K}( \f 0, \mathbb{O})= 0$.
This in turn guarantees that  $ \mathcal{P}(\Se)\in L^1_\loc([0,T))$ and hence, that the terms in~\eqref{relen.} are well defined for all $(\vv,\tS)\in D_\mathcal{K}\cap\Ybzero$. 
Observe that standard interpolation estimates 
show that 
\[
	\mathcal{W}_0^{(\mathcal K)} (\f v , \Se| \vv,\tS ),\ \left \langle \mathcal{A}_0 (\vv,\tS) ,\begin{pmatrix}
		\f v - \vv \\ \Se-\tS 
	\end{pmatrix} \right \rangle \in L^1_\loc([0,T))
\]
for $(\f v , \Se)\in \Xbzero$ and $(\vv,\tS)\in \Ybzero\cap D_\mathcal{K}$, 
see also the estimates in the proof of Theorem \ref{thm:envar} below.

Notice that the classical energy inequality encodes some basic information concerning the 
long-time behavior of solutions, cf.~\cite[Thm.~6.2]{diss}.

\end{remark}

\begin{remark}[Comparison to previous formulations]\label{rem:formulation}
The above formulation differs from previous notions of dissipative solutions (cf.~\cite{Lions_incompressible_1996}), which were based on similar relative energy inequalities.
In those definitions, the 
regularity weight $\mathcal{K}$ was fixed and chosen such that the last three lines of~\eqref{W.0} are non-negative. 
Additionally, these terms are usually just estimated from below by zero.
By 
keeping the mentioned terms in the inequality~\eqref{relen.}, the present solution concept is more selective than previous versions. 
\end{remark}

\subsection{Global existence of energy-variational solutions}\label{subsec:existence.envarsol}

In Proposition \ref{prop:relen} we have seen that the generalized solutions for $\gamma>0$
satisfy the relative energy inequality~\eqref{relen}.
For suitable regularity weights $\mathcal K$ we may perform the limit $\gamma\to0$ in this inequality to obtain the following existence result.

\begin{theorem}[Existence of energy-variational solutions]\label{thm:envar}
	Under the assumptions stated in Hypothesis \ref{hypo},
	 there exists an energy-variational solution to the initial-boundary value problem~\eqref{eqvis.0} of type $\mathcal{K}$ given by
	 \begin{equation}\label{KStilde}
	 \mathcal{K}(\tS):=
	 C\bp{\|\tS\|_{L^\infty(\Omega)}^2+\|\nabla \tS\|_{L^3(\Omega)}^2}
	 \end{equation}
	 for a suitable constant $C=C(\Omega,\mu)>0$.
	 
	 Moreover, any such energy-variational solution satisfies the Navier--Stokes component \eqref{eqvis1.0}
	 in the weak sense,
	 that is,
	 \eqref{eq:weaksol.v} is satisfied for all $\Phi\in\CRcisigma(\Omega\times[0,T))$.
\end{theorem}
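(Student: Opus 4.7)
\textbf{Proof plan for Theorem~\ref{thm:envar}.} The plan is to construct an energy-variational solution as a weak limit of the generalized solutions obtained in Theorem~\ref{thm:gensol} for the regularized problem with $\gamma>0$. Pick a sequence $\gamma_n\downarrow 0$ and let $(\vvel_n,\Stens_n)=(\vvel_{\gamma_n},\Stens_{\gamma_n})$ denote the corresponding generalized solutions. The energy estimate~\eqref{est:enineq} and the assumption~$\mathcal{P}\geq 0$ yield the $n$-uniform bounds
\begin{equation*}
\|\vvel_n\|_{L^\infty(L^2)\cap L^2(H^1_0)}+\|\Stens_n\|_{L^\infty(L^2)}+\sqrt{\gamma_n}\|\nabla\Stens_n\|_{L^2(L^2)}+\|\mathcal{P}(\Stens_n)\|_{L^1(0,T)}\;\le\;C,
\end{equation*}
together with $\|\vf\|_{L^2(H^{-1})}$-control on $\partial_t\vvel_n$ up to the Leray projector (inferred from~\eqref{eq:weaksol.v} and Lemma~\ref{lem:ineq}). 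Passing to a subsequence, one obtains $\vvel_n\rightharpoonup\vvel$ in $L^2(H^1_0)$ and weak$^*$ in $L^\infty(L^2)$, $\Stens_n\rightharpoonup^*\Stens$ in $L^\infty(L^2)$, $\gamma_n\nabla\Stens_n\to 0$ in $L^2(L^2)$, and by Aubin--Lions $\vvel_n\to\vvel$ strongly in $L^2(L^q)$ for every $q\in[2,6)$.

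For any $(\tvvel,\tS)\in D_\mathcal{K}\cap\Ybzero$ with $\mathcal{K}$ as in~\eqref{KStilde}, Proposition~\ref{prop:relen} yields the relative energy inequality~\eqref{relen} for $(\vvel_n,\Stens_n)$. Since $\Stens_n\in L^2(H^1)$ and $\vvel_n-\tvvel$ is divergence-free and vanishes on $\partial\Omega$, integration by parts gives
\begin{equation*}
-\int_\Omega((\vvel_n-\tvvel)\cdot\nabla(\Stens_n-\tS)):\tS\,\ddd x
=\int_\Omega(\Stens_n-\tS)\otimes(\vvel_n-\tvvel)\dreidots\nabla\tS\,\ddd x,
\end{equation*}
so that $\mathcal{W}^{(\mathcal{K})}(\vvel_n,\Stens_n|\tvvel,\tS)=\mathcal{W}_0^{(\mathcal{K})}(\vvel_n,\Stens_n|\tvvel,\tS)+\gamma_n\|\nabla(\Stens_n-\tS)\|_{L^2}^2$. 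Dropping this non-negative contribution and using that $\gamma_n\int_\Omega\nabla\tS\dreidots\nabla(\Stens_n-\tS)\,\ddd x=O(\sqrt{\gamma_n})$ (via Cauchy--Schwarz with $\sqrt{\gamma_n}\nabla\Stens_n$ bounded in $L^2(L^2)$) to convert $\mathcal{A}_{\gamma_n}$ into $\mathcal{A}_0$ up to a vanishing remainder, we find that $(\vvel_n,\Stens_n)$ satisfies~\eqref{relen.} with an error that tends to zero as $n\to\infty$.

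To pass to the limit, I work with the weak-in-time reformulation provided by Lemma~\ref{lem:invar}, testing against non-negative $\phi\in\tilde{W}((0,T))$ multiplied by the deterministic weight $e^{-\int_0^\tau\mathcal{K}\,\ddd\sigma}$. The linear contributions from $\langle\mathcal{A}_0(\tvvel,\tS),(\vvel_n-\tvvel,\Stens_n-\tS)\rangle$ converge directly by weak convergence, and $\int\phi\,\mathcal{P}(\Stens_n)$ passes to the limit by weak lower semicontinuity of $\mathcal{P}$ assumed in Hypothesis~\ref{hypo}. The crux is to show
\begin{equation*}
\liminf_{n\to\infty}\int_0^T\phi(s)\,\mathcal{W}_0^{(\mathcal{K})}(\vvel_n,\Stens_n|\tvvel,\tS)(s)\,e^{\int_s^T\mathcal{K}\,\ddd\tau}\,\ddd s\;\ge\;\int_0^T\phi(s)\,\mathcal{W}_0^{(\mathcal{K})}(\vvel,\Stens|\tvvel,\tS)(s)\,e^{\int_s^T\mathcal{K}\,\ddd\tau}\,\ddd s.
\end{equation*}
Writing $(u_n,X_n):=(\vvel_n-\tvvel,\Stens_n-\tS)$, Young's inequality combined with Sobolev embedding shows
\begin{align*}
\bigl|{\textstyle\int}X_n\otimes u_n\dreidots\nabla\tS\bigr|&\le\tfrac{\mu}{8}\|\nabla u_n\|_{L^2}^2+C\|\nabla\tS\|_{L^3}^2\|X_n\|_{L^2}^2,\\
\bigl|{\textstyle\int}(X_nW_n-W_nX_n):\tS\bigr|&\le\tfrac{\mu}{8}\|\nabla u_n\|_{L^2}^2+C\|\tS\|_{L^\infty}^2\|X_n\|_{L^2}^2,
\end{align*}
with $W_n=(\nabla u_n)_{\skw}$. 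By the choice~\eqref{KStilde}, the $\mathcal{K}(\tS)\tfrac12\|X_n\|_{L^2}^2$ piece in $\mathcal{K}\mathcal{R}$ absorbs the above $\|X_n\|_{L^2}^2$-contributions, so that $\mathcal{W}_0^{(\mathcal{K})}$ is, modulo terms involving only $u_n$ in $L^2$ (in particular $-\int(u_n\cdot\nabla u_n)\cdot\tvvel$, which converges exactly thanks to the strong $L^2(L^q)$-convergence of $u_n$ combined with weak $L^2$-convergence of $\nabla u_n$), a convex quadratic form in $(u_n,X_n)$ of the gradient variables $\nabla u_n$ and $X_n$. Weak lower semicontinuity of convex $L^2$-functionals then yields the desired $\liminf$-inequality. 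The partial energy $\mathcal{R}(\vvel_n(t),\Stens_n(t)|\tvvel(t),\tS(t))$ in the pointwise formulation is recovered via Lemma~\ref{lem:invar} upon choosing suitable sequences $\phi_k\to\chi_{(0,t)}$.

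Finally, the weak formulation~\eqref{eq:weaksol.v} for the limit $\vvel$ follows from direct passage to the limit in~\eqref{eq:weaksol.v} for $\vvel_n$, using strong $L^2$-convergence of $\vvel_n$ for the convective term and weak convergence of $\nabla\vvel_n$ and $\Stens_n$ for the linear terms. The main obstacle is the Jaumann term in $\mathcal{W}_0^{(\mathcal{K})}$: it involves the product $X_n\,W_n$ of two only weakly convergent sequences, and no compactness for $\Stens_n$ or $\nabla\vvel_n$ is available as $\gamma_n\to 0$. This is overcome precisely by the decision, highlighted in Remark~\ref{rem:formulation}, to keep $\mathcal{W}^{(\mathcal{K})}$ inside the relative energy inequality and to tune $\mathcal{K}$ so that the entire quadratic form is convex in $(u_n,X_n)$, turning the problematic bilinear product into a weakly lower semicontinuous object rather than one requiring strong convergence.
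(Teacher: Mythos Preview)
Your approach is essentially the same as the paper's: approximate by generalized solutions for $\gamma>0$, extract weak/weak-$\ast$ limits via the uniform energy bounds and Aubin--Lions, rewrite $\mathcal{W}_\gamma$ as $\mathcal{W}_0+\gamma\|\nabla(\Stens_n-\tStens)\|_{L^2}^2$ via integration by parts, drop the non-negative remainder, and pass to the limit in the weak-in-time reformulation (Lemma~\ref{lem:invar}) using weak lower semicontinuity of the convex quadratic part of $\mathcal{W}_0^{(\mathcal{K})}$ and strong $L^2$-convergence of $\vvel_n$ for the convective term. The paper makes the same splitting $\mathcal{W}_0=\mathcal{C}+\mathcal{Q}$ and proves $\mathcal{Q}\ge0$ (hence convex) via the same Young/Sobolev estimates you wrote down.

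One point to flag: the ``Moreover'' clause asserts that \emph{any} energy-variational solution of type $\mathcal{K}$ given by~\eqref{KStilde} satisfies the weak Navier--Stokes formulation, not only the one you constructed. Your direct passage to the limit in~\eqref{eq:weaksol.v} for $\vvel_n$ only yields this for the particular limit $(\vvel,\Stens)$. The paper covers the general statement by invoking Proposition~\ref{prop:envarweak}, which shows that the weak formulation follows from the relative energy inequality alone whenever $\mathcal{K}(\tvvel,\mathbb{O})=0$ for all $\tvvel\in\CRcisigma(\Omega)$---a property satisfied by~\eqref{KStilde}. You should appeal to that proposition (or reproduce its linearization argument) to complete the proof as stated.
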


\begin{proof}
In order to prove the existence of energy-variational solutions, for each $\gamma>0$ 
we consider a generalized solution $(\f v_\gamma  , \Se_\gamma  )\in \Xb$ 
to \eqref{eqvis},
which exists due to Theorem~\ref{thm:gensol}. 
By Remark \ref{rem:enineq.gensol} the generalized solution $(\vgam , \Sgam )$ 
fulfills the energy inequality
\begin{multline}
\frac{1}{2}\norm{\vgam(t)}_{L^2(\Omega)}^2 + \frac{1}{2} \norm{\Sgam (t)}_{L^2(\Omega)}^2 
	+\mu\int_0^{t}\norm{\grad\vgam}_{L^2(\Omega)}^2\ds+ \gamma \int_0^{t} \norm{\grad \Sgam}_{L^2(\Omega)}^2 \ds + \int_0^{t} \potential{(\Sgam)} \ds 
\\
	\leq\frac{1}{2} \norm{\vvel_0}_{L^2(\Omega)}^2
	+\frac{1}{2}\norm{\Stens_0}_{L^2(\Omega)}^2
	+\int_0^{t} \langle \f f , \vgam \rangle \ds\label{enin}
\end{multline}
for a.e.~$t\in (0,T)$. Additionally, we may find a $\gamma$-independent bound on the time derivative of the velocity field. 
Indeed, since $\np{\vvel_\gamma,\Stens_\gamma}$ satisfies the weak formulation~\eqref{eq:weaksol.v}, 
for every $\Phi \in \CRcisigma\np{\OmT}$ we can estimate 
\begin{align*}
&\left |\int_0^t ( \vgam , \t\Phi ) \ds\right |
= \left |\int_0^t\langle \f f , \Phi \rangle + \left ( \vgam \otimes \vgam , \nabla \Phi \right  ) - \mu \left ( \nabla \vgam , \nabla \Phi \right ) - \eta \left ( \nabla \Sgam , \nabla \Phi \right ) \ds\right |
\\
&\qquad
\leq C  \int_0^t \left (\| \f f \|_{(\HSRNsigma{1})^*} + \| \vgam \|_{L^4(\Omega)}^2 + \mu \| \nabla \vgam \|_{L^2(\Omega)} + \eta \| \grad\Sgam \|_{L^2(\Omega)}\right )  \| \nabla \Phi \|_{L^2(\Omega)}\ds.
\end{align*}
Using the Ladyzhenskaya inequality $ \| \vgam \|_{L^4(\Omega)} \leq C \| \vgam\|_{L^2(\Omega)} ^{1/4} \| \nabla \vgam \|_{L^2(\Omega)}^{3/4} $, we find that 
\begin{multline*}
 \| \t \vgam \|_{L^{4/3}(0,t;(\HSRNsigma{1})^*)} ^{4/3} = \int_0^{t} \| \t \vgam \|_{(\HSRNsigma{1})^*} ^{4/3} \ds\\
  \leq C \int_0^{t}  \| \f f \|_{(\HSRNsigma{1})^*}^{4/3} +( \| \vgam\|_{L^2(\Omega)} ^{1/2} \| \nabla \vgam \|_{L^2(\Omega)}^{3/2})^{4/3}  + \mu \| \nabla \vgam \|_{L^2(\Omega)}^{4/3} + \eta \| \Sgam \|_{L^2(\Omega)}^{4/3} \ds
 \\
 \leq C\left (  \| \f f \|_{L^2(0,t;(\HSRNsigma{1})^*)}^{4/3} + \| \vgam\|_{L^\infty(0,t;L^2(\Omega) )}^{2/3} \| \vgam\|_{L^2(0,t;\HSRNsigma{1})}^2 + \| \vgam\|_{L^2(0,t;\HSRNsigma{1})}^{4/3} + \| \Sgam\|_{L^2(0,t;L^2(\Omega))}^{4/3} \right ) \,.
\end{multline*}
By the aforementioned bounds and a classical diagonalization argument 
(see \cite[Proof of Theorem 3.1.1]{Sohr_NSE_2001} for example), 
we infer the existence of a limit function $\np{\vvel,\Stens}$ such that
for each $t\in(0,T)$ we have
\begin{subequations}\label{conv}
\begin{align}
	\vgam&\rightharpoonup\f v  && \tin \LR{2}(0,t;\HSR{1}(\Omega)^3), \label{conv:vweak}
	\\
	\vgam&\overset{\ast}{\rightharpoonup}\f v  && \tin \LR{\infty}(0,t;\LRsigma{2}(\Omega))\label{conv:vweakstar},
	\\
	\vgam&\to\f v  && \tin \LR{2}(0,t;\LR{2}(\Omega)^3),\label{conv:vstrong}
	\\
	\Sgam &\overset{\ast}{\rightharpoonup} \Se  && \tin \LR{\infty}(0,t;\LRdev{2}(\Omega))\label{conv:S}
\end{align}
\end{subequations}
for an appropriately chosen subsequence $\gamma \ra 0$. 
Next we conclude that via these convergences, we may pass to the limit in 
the relative energy inequality~\eqref{relen}. 

Proposition~\ref{prop:relen} guarantees that~\eqref{relen} holds for   
all non-negative functionals $\mathcal{K}:\statespace\to[0,\infty]$, all $(\vv,\tS)\in D_\mathcal{K}\cap\Yb$
and a.e.~$t\in(0,T)$. 
We may reformulate this inequality using Lemma~\ref{lem:invar} and observing that $\mathcal{W}_\gamma \geq \mathcal{W}_0$. This leads to
\begin{equation}
\begin{aligned}
&- \int_0^t \phi' \mathcal{R}(\vgam ,\Sgam| \vv,\tS ) \ds + \int_0^t \phi \mathcal{W}_0 (\vgam ,\Sgam| \vv,\tS ) \e^{\int_s^t \mathcal{K}(\vv,\tS) \de \tau} \de s 
\\
&\quad
+\int_0^t \phi \bp{\mathcal{P}(\Sgam)-\mathcal{P}(\tS)} \e^{\int_s^t \mathcal{K}(\vv,\tS) \de \tau} \de s
+ \int_0^t \phi \left \langle \mathcal{A}_{\gamma} ( \vv,\tS) , \begin{pmatrix}
\vgam-\vv \\ \Sgam-\tS 
\end{pmatrix}\right \rangle \e^{\int_s^t \mathcal{K}(\vv,\tS) \de \tau} \de s 
\\
&\qquad\qquad\qquad\qquad\qquad\qquad\qquad\qquad\qquad\qquad
\leq \mathcal{R}(\f v_0 ,\Se_0 | \vv(0) , \tS(0)) \e^{\int_0^t\mathcal{K}(\vv,\tS)\de s }
\end{aligned}
\label{relen0weak}
\end{equation}
for all $\phi \in \tilde{\C} ([0,T])$. 
We now show that, when choosing $\mathcal K$ as in \eqref{KStilde},
we may pass to the limit $\gamma\to0$ in each of the terms on the left-hand side 
of \eqref{relen0weak}
via the convergence properties from \eqref{conv}.

Since $\phi'\le0$, the first term in~\eqref{relen0weak} is  convex and continuous on $L^2(\Omega\times(0,t))$ as a function of $(\vgam ,\Sgam)$. Hence, it is weakly
lower semicontinuous, and its convergence follows from~\eqref{conv:vstrong} and~\eqref{conv:S}.

In order to pass to the limit in the second integral term in \eqref{relen0weak}, we split
$$\mathcal{W}_0 (\vgam ,\Sgam| \vv,\tS ) = \mathcal{C}( \vgam,\vv)+
\mathcal{Q}(\vgam ,\Sgam| \vv,\tS ),$$
where $\mathcal{C}( \vvel,\vv):=-\int_\Om(( \f v - \vv)\cdot\nabla (\f v - \vv)) \cdot \vv \,\dx$ \;and
\begin{equation*}
	\begin{split}
		\mathcal{Q}(\f v ,\Se| \vv,\tS ) :={}& \mu \| \nabla \f v - \nabla \vv\|_{L^2(\Omega)}^2 
		+ \mathcal{K}(\tS) \mathcal{R}(\f v, \Se|\vv, \tS)
		+ \int_\Om 
		(\Se-\tS)\otimes( \f v - \vv)\dreidots\nabla\tS \;\ddd x		
		\\&	- \int_\Om \left ((\Se-\tS)(\nabla \f v {-}\nabla \vv )_{\skw} - (\nabla \f v {-} \nabla \vv )_{\skw} (\Se-\tS ) \right): \tS \;\ddd x.
	\end{split}
\end{equation*}
The convergence of the part of the integral involving the term $\mathcal{C}( \vgam,\vv)$ follows as usual 
since the strong convergence~\eqref{conv:vstrong} combined with~\eqref{conv:vweak} implies the weak convergence of $\f v_\gamma\cdot\nabla \f v_\gamma$.
Concerning the remaining part involving  $\mathcal{Q}(\vgam ,\Sgam| \vv,\tS )$, we assert that 
(for fixed $\vv\in\HSRNsigma{1}(\Omega)$ and $\tS\in W^{1,3}(\Om)\cap L^\infty(\Om)$) the continuous functional
$Q:\HSRNsigma{1}(\Omega)\times\LRdev{2}(\Om)\to\mathbb{R},$ $Q(\f v ,\Se):=\mathcal{Q}(\f v ,\Se| \vv,\tS )$ is convex provided the constant $C=C(\mu,\Om)$ in the definition of $\mathcal{K}(\tS)$ is chosen large enough.  Since $Q$ is the sum of a quadratic form $Q_1$ and an affine part, it suffices to show that $Q\ge0$, as this implies the non-negativity of $Q_1$ and hence its convexity (cf.~\cite[Prop.~3.71]{BS_2000}).
To show the non-negativity of $Q$ (and its continuity) we use the Sobolev--Poincar\'e inequality 
$\norm{\tvvel}_{\LR{6}(\Omega)}\leq C\norm{\grad\tvvel}_{L^2(\Omega)}$
and estimate, choosing $C$ sufficiently large,
\begin{align*}
	&	\left|\int_\Om (\Se-\tS)\otimes( \f v - \vv)\dreidots\nabla\tS \;\ddd x	\right|
	\le \frac{\mu}{2} \| \nabla \f v - \nabla \vv\|_{L^2(\Omega)}^2
	+ C\| \nabla \tS \|_{L^3(\Omega)}^2 \mathcal{R}(\f v, \Se|\vv,\tS).
\end{align*}
Combined with the bound
\begin{align*}
	 &\left|\int_\Om \big ((\Se-\tS)(\nabla \f v {-}\nabla \vv )_{\skw} - (\nabla \f v {-}\nabla \vv )_{\skw} (\Se-\tS ) \big): \tS \;\ddd x\right|
	\\&\hspace{.3\linewidth}\le \frac{\mu}{2} \| \nabla \f v - \nabla \vv\|_{L^2(\Omega)}^2 
	+ C \| \tS \|_{L^\infty(\Omega)}^2 
	\frac{1}{2} \| \Se-\tS \|_{L^2(\Omega)}^2,
\end{align*}
this shows that $Q$ is non-negative and continuous. Hence, we may use weak lower semicontinuity to take the limit $\gamma\to0$ in the integral term 
$ \int_0^t \phi \mathcal{Q} (\vgam ,\Sgam| \vv,\tS ) \e^{\int_s^t \mathcal{K}(\vv,\tS) \de \tau} \de s$.

For the third term in \eqref{relen0weak},
note that the induced functional 
$\mathbb{T}\mapsto\int_0^t \phi \mathcal{P}(\mathbb{T}) \e^{\int_s^t \mathcal{K}(\vv,\tS) \de \tau} \de s$
is convex and, by Fatou's lemma,  lower semicontinuous on $\LR{2}(0,t;\LR{2}(\Omega))$.
This implies its weak lower semicontinuity, whence
\[
\liminf_{\gamma\to0}
\int_0^t \phi \mathcal{P}(\Sgam) \e^{\int_s^t \mathcal{K}(\vv,\tS) \de \tau} \de s
\geq\int_0^t \phi 
\mathcal{P}(\Stens) \e^{\int_s^t \mathcal{K}(\vv,\tS) \de \tau} \de s.
\]

For the term involving the operator $\mathcal{A}_\gamma$, 
we first estimate the $\gamma$-dependent term as
\[
\begin{aligned}
&\int_0^t \gamma \bp{\nabla \tS \,, \nabla ( \Sgam -\tS ) }\ds \\ 
&\qquad
\leq \sqrt{\gamma} \left (\int_0^t\| \nabla \tS \|_{L^2(\Omega)}\ds\right )^{1/2} \left ( \int_0^t \gamma\| \nabla\Sgam\|_{L^2(\Omega)}^2 \ds\right )^{1/2} \!\!+ \gamma \| \tS \|_{\LR{2}(0,t;\HSR{1}(\Omega))}^2
\end{aligned}
\]
for all $t\in(0,T)$.
Due to the bound~\eqref{enin} on $ \int_0^t \gamma\| \nabla\Sgam\|_{L^2(\Omega)}^2 \ds$ and the regularity of~$\tS$, the right-hand side of the last inequality tends to zero as $\gamma \ra 0$. 
Since the remaining terms do not explicitly depend on $\gamma$
and $(\vgam,\Sgam)$ occurs at most linearly,
we conclude convergence of the last term on the left-hand side of \eqref{relen0weak}.

In total, we can pass to the limit inferior with $\gamma \ra 0$ in~\eqref{relen0weak},
which implies inequality~\eqref{relen.} by invoking Lemma~\ref{lem:invar}.
Therefore, $\np{\vvel,\Stens}$ is an energy-variational solution to \eqref{eqvis.0}.

Since $\mathcal{K}(\vv,\mathbb{O})=0$ for all $\vv\in \CRcisigma\np{\Om}$,
we further conclude from Proposition \ref{prop:envarweak} below that 
\eqref{eqvis1.0} is satisfied in the weak sense. 
\end{proof}

\begin{remark}
\label{rem:weaklystarclosed}
An argument similar to the proof of Theorem \ref{thm:envar} shows 
that the set of energy-variational solutions of type $\mathcal{K}$ given by~\eqref{KStilde} is
weakly-$\ast$ sequentially compact in $\Xbzero$ in the following sense:
For every sequence of energy-variational solutions $(\vvel_j,\Stens_j)$
there exists an energy-variational solution $\np{\vvel,\Stens}\in\Xbzero$
and a subsequence (also denoted by $(\vvel_j,\Stens_j)$)
with
\[
	\vvel_j\rightharpoonup\f v  
	\ \tin \LR{2}(0,t;\HSR{1}(\Omega)^3),
	\qquad
	\vvel_j\overset{\ast}{\rightharpoonup}\f v  
	\ \tin \LR{\infty}(0,t;\LRsigma{2}(\Omega)),
	\qquad
	\Stens_j \overset{\ast}{\rightharpoonup} \Se  
	\ \tin \LR{\infty}(0,t;\LRdev{2}(\Omega))
\]
for all $t\in(0,T)$.
Indeed, a sequence $\np{\vvel_j,\Stens_j}$
of energy-variational solutions of type $\mathcal{K}$
satisfies the relative energy inequality \eqref{relen.}
with $\np{\tvvel,\tStens}=(\f 0, \zerotens)$, that is,
the energy inequality
\[
\begin{aligned}
\frac{1}{2}\norm{\vvel_j(t)}_{\LR{2}(\Omega)}^2
+\frac{1}{2}\norm{\Stens_j(t)}_{\LR{2}(\Omega)}^2
&+\int_0^t \mu\norm{\grad\vvel_j}_{\LR{2}(\Omega)}^2+\potential(\Stens_j)\ds
\\
&\qquad
\leq\frac{1}{2}\norm{\vvel_0}_{\LR{2}(\Omega)}^2
+\frac{1}{2}\norm{\Stens_0}_{\LR{2}(\Omega)}^2
+\int_0^t \langle \f f,\vvel_j\rangle\ds
\end{aligned}
\]
for a.a.~$t\in(0,T)$. Moreover, Proposition~\ref{prop:envarweak} below ensures that the Navier--Stokes component holds in the weak sense.
In the same way as above, 
we further deduce that $(\partial_t\vvel_j)$ is bounded in 
$L^{4/3}(0,t;(\HSRNsigma{1})^*)$ for all $t\in(0,T)$,
and we conclude
the asserted convergence properties as well as
$\vvel_j\to\vvel$ in $\LR{2}(0,t;\LR{2}(\Omega))$ for all $t\in(0,T)$.
In nearly the same way as for the limit $\gamma\to0$ above,
we can now perform the limit $j\to\infty$ 
to infer that $\np{\vvel,\Stens}$ is an
energy-variational solution of type $\mathcal{K}$.
\end{remark}

\subsection{Further properties}

Here we collect general properties of energy-variational solutions.
Most importantly, we show that 
energy-variational solutions are subject to the weak formulation of 
the Navier--Stokes equations~\eqref{eqvis1.0} 
if the regularity weight $\mathcal K$ vanishes on $\CRcisigma(\Omega)\times\set{\zerotens}$, 
where $\mathbb{O}\in \LRdev{2}(\Om)$ denotes the zero tensor.
Thus, this is in particular the case for the solutions obtained in Theorem \ref{thm:envar}. 
Notice that this property is to be expected for problem~\eqref{eqvis.0} since the lack of weak sequential compactness in $L^1_\loc(\overline\Om\times[0,T))$ only occurs in the tensor component~\eqref{eqvis2.0} but not in~\eqref{eqvis1.0}.

\begin{proposition}\label{prop:envarweak}
	Suppose that the regularity weight $\mathcal{K}:\statespace\to [0,\infty] $
	satisfies 
	$\mathcal{K}(\vv,\mathbb{O})=0$ for all 
	$\vv\in \CRcisigma\np{\Om}$.
	Then every energy-variational solution of type $\mathcal{K}$ is a weak solution of the Navier--Stokes component in the sense of~\eqref{eq:weaksol.v}.
\end{proposition}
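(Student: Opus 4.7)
I will exploit the fact that the relative energy inequality \eqref{relen.} is valid with the test function $(\vv,\tS)=(\ve\Phi,\mathbb{O})$ for every $\ve\in\R$ and every $\Phi\in\CRcisigma(\Omega\times[0,T))$. Indeed, such a pair lies in $\Ybzero$ and, by hypothesis, satisfies $\mathcal{K}(\ve\Phi,\mathbb{O})=0$, so $(\ve\Phi,\mathbb{O})\in D_\mathcal{K}\cap\Ybzero$ and the exponential weights in \eqref{relen.} collapse to~$1$. Since $\Phi$ is compactly supported in time, pick $T'<T$ with $\Phi(s)=0$ for $s\ge T'$ and any $t\in(T',T)$ at which \eqref{relen.} holds (a.e.~$t$ works). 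Since $\Phi$ is divergence-free and vanishes on $\partial\Omega$, the cubic-in-$\ve$ terms $\ve^{3}\int(\Phi\cdot\nabla\Phi)\cdot\Phi\dx = \tfrac{\ve^3}{2}\int\Phi\cdot\nabla|\Phi|^{2}\dx=0$ arising in $\mathcal{W}_{0}^{(\mathcal{K})}$ and in $\mathcal{A}^{(1)}$ drop out. All remaining contributions are of degree $\le 2$ in $\ve$, so \eqref{relen.} reduces to
\[
F(\ve) \;=\; a_{0}+a_{1}\ve+a_{2}\ve^{2} \;\le\; 0 \qquad \text{for all } \ve\in\R.
\]

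\textbf{Main step: $a_{2}=0$.} This is the heart of the argument and the place where the hidden variational structure must be unveiled. I will collect the $\ve^{2}$ contributions from the four sources: (i) $\mathcal{R}$ produces $\frac{1}{2}\|\Phi(t)\|_{L^{2}}^{2}-\frac{1}{2}\|\Phi(0)\|_{L^{2}}^{2}=-\frac{1}{2}\|\Phi(0)\|_{L^{2}}^{2}$ (using $\Phi(t)=0$); (ii) $\mathcal{W}_{0}^{(\mathcal{K})}$ contributes $\mu\int_{0}^{t}\!\|\nabla\Phi\|_{L^{2}}^{2}\ds$ together with $\int_{0}^{t}\!\int_{\Omega}[(\Phi\cdot\nabla\f v)\cdot\Phi+(\f v\cdot\nabla\Phi)\cdot\Phi]\dx\ds$; (iii) $\mathcal{A}^{(1)}(\ve\Phi,\mathbb{O})$ tested against $\f v-\ve\Phi$ contributes $-\int_{0}^{t}\!\langle\t\Phi,\Phi\rangle\ds+\int_{0}^{t}\!\int_{\Omega}(\Phi\cdot\nabla\Phi)\cdot\f v\dx\ds-2\mu\int_{0}^{t}\!\int_{\Omega}(\nabla\Phi)_{\sym}{:}\nabla\Phi\dx\ds$; and (iv) $\mathcal{A}^{(2)}$ contributes nothing at order $\ve^{2}$. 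These cancel by three integration-by-parts identities available for divergence-free fields vanishing on $\partial\Omega$: $\int_{\Omega}(\f v\cdot\nabla\Phi)\cdot\Phi=0$; $\int_{\Omega}(\Phi\cdot\nabla\f v)\cdot\Phi+\int_{\Omega}(\Phi\cdot\nabla\Phi)\cdot\f v=\int_{\Omega}\Phi\cdot\nabla(\Phi\cdot\f v)=0$; and $\int_{\Omega}\nabla\Phi^{\top}{:}\nabla\Phi=0$, which yields $2\mu\int_{\Omega}(\nabla\Phi)_{\sym}{:}\nabla\Phi=\mu\int_{\Omega}|\nabla\Phi|^{2}$. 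Combining these with $-\int_{0}^{t}\langle\t\Phi,\Phi\rangle\ds = \tfrac{1}{2}\|\Phi(0)\|_{L^{2}}^{2}$ (again because $\Phi(t)=0$) all pieces cancel, so $a_{2}=0$.

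\textbf{Conclusion.} Once $a_{2}=0$, the remaining linear inequality $a_{0}+a_{1}\ve\le 0$ holding for \emph{every} $\ve\in\R$ forces $a_{1}=0$; the potentially negative slack $a_{0}$ (corresponding to the classical energy inequality recalled in the remark after Def.~\ref{def:envar}) plays no role here. It remains to read off $a_{1}$. Using the symmetry of $\Se$ to replace $(\nabla\Phi)_{\sym}{:}\Se$ by $\nabla\Phi{:}\Se$, and the divergence-free identity $\int_{\Omega}2(\nabla\Phi)_{\sym}{:}\nabla\f v=\int_{\Omega}\nabla\Phi{:}\nabla\f v$ to combine the $\mu$-terms $-2\mu(\nabla\f v,\nabla\Phi)+2\mu\int(\nabla\Phi)_{\sym}{:}\nabla\f v$ into $-\mu\int\nabla\f v{:}\nabla\Phi$, the identity $a_{1}=0$ becomes
\[
\int_{0}^{T}\!\!\int_{\Omega}\!\bigl[-\f v\cdot\t\Phi+(\f v\cdot\nabla\f v)\cdot\Phi+\eta\,\Se{:}\nabla\Phi+\mu\nabla\f v{:}\nabla\Phi\bigr]\dx\ds \;=\; \int_{0}^{T}\!\!\langle\f f,\Phi\rangle\ds + \int_{\Omega}\!\f v_{0}\cdot\Phi(0)\dx,
\]
where $\int_{0}^{t}$ has been replaced by $\int_{0}^{T}$ thanks to $\mathrm{supp}\,\Phi\subset[0,T')$. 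Since $\Phi\in\CRcisigma(\Omega\times[0,T))$ was arbitrary, this is exactly \eqref{eq:weaksol.v}.

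\textbf{Expected obstacle.} The subtle point is that an inequality of the form $a_{0}+a_{1}\ve+a_{2}\ve^{2}\le 0$ on all of $\R$ \emph{does not} normally force $a_{1}=0$ (it only gives $a_{1}^{2}\le 4a_{0}a_{2}$ when $a_{2}<0$). The proof therefore hinges entirely on the non-trivial cancellation $a_{2}=0$ described above, which is a structural feature of the definition of $\mathcal{W}_{0}^{(\mathcal{K})}$ combined with the divergence-free constraint; tracking all contributions carefully and recognising these cancellations is the only delicate part of the argument.
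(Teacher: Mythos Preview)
Your proof is correct and follows essentially the same approach as the paper: both insert a scaled test function $(\alpha\Phi,\mathbb{O})$ into the relative energy inequality and extract the term linear in the scaling parameter. The execution differs slightly. The paper works in the weak-in-time formulation via Lemma~\ref{lem:invar}, divides by $\alpha>0$, and sends $\alpha\to\infty$; the vanishing of the quadratic and cubic contributions is hidden in the phrase ``regroup terms,'' and the inequality is then upgraded to an equality by linearity in $\tu$. You instead fix a time $t$ beyond the support of $\Phi$, let $\ve$ range over all of $\R$, verify explicitly the cancellation $a_2=0$ (which is indeed the structural heart of the argument), and deduce $a_1=0$ directly. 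Your version makes the key cancellation transparent, which is a genuine expository advantage.

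One small technical point you should address: the inequality \eqref{relen.} holds for a.e.\ $t$ \emph{for each fixed} test function, so the exceptional null set may a priori depend on $\ve$. Since your polynomial $F(\ve)$ has coefficients independent of $\ve$ and depends continuously on $\ve$, it suffices to verify $F(\ve)\le0$ for rational $\ve$ (a countable family, hence a common full-measure set of admissible $t$), and then pass to all $\ve\in\R$ by continuity. This is routine, but should be mentioned.
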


\begin{proof}	
The hypothesis on $\mathcal{K}$ implies the identity $\e^{\int_0^t\mathcal{K}(\vv, \mathbb{O}) \de s} \equiv 1$ for all $\vv\in\CRcisigma\np{\OmTzero}$. Hence,  choosing $\tS\equiv \mathbb{O}$ in~\eqref{relen.} yields the inequality
\begin{multline*}
\mathcal{R} (\f v(t) , \Se(t)| \vv(t),\mathbb{O}) +\int_0^t  \left (\mathcal{W}_0 (\f v , \Se| \vv,\mathbb{O}) +\mathcal{P}(\Se)+ \left \langle \mathcal{A}_0 (\vv,\mathbb{O}) ,\begin{pmatrix}
\f v - \vv \\\Se 
\end{pmatrix}\right \rangle \right )  \de s\\  \leq \mathcal{R} (\f v_0 , \Se_0| \vv(0),\mathbb{O})    \,
\end{multline*}
for all $\vv\in\CRcisigma\np{\OmTzero}$ and for a.e.~$t\in(0,T)$. 
Applying Lemma~\ref{lem:invar} and observing that 
\[\langle\mathcal{A}^{(1)}(\vv,\mathbb{O}),(\vvel-\vv)\rangle=\int_ \Omega \t \vv \cdot  \f v - \frac{1}{2}\t |\vv|^2  + (\vv\cdot \nabla )\vv \cdot \f v   +  2 \mu (\nabla \vv)_{\sym} : (\nabla \f v-\nabla \vv)   \dx - \langle \f f, \f v -\vv  \rangle\]
and $\langle\mathcal{A}^{(2)}_0(\vv,\mathbb{O}),\Se \rangle= -\int_ \Omega \eta\syv :\Se\;\ddd x$,
we find
\begin{equation}\label{ineqequiv}
\begin{aligned}
&-\int_0^T  \phi ' \left ( \frac{1}{2} \| \f v {-} \vv \|_{L^2(\Omega)}^2 + \frac{1}{2}\| \Se\|_{L^2(\Omega)}^2 \right ) \de t - \left ( \frac{1}{2} \| \f v_0 {-} \vv(0) \|_{L^2(\Omega)}^2 + \frac{1}{2}\| \Se_0\|_{L^2(\Omega)}^2\right ) 
\\
&\quad
+\int_0^T \phi\left ( \mu \| \nabla \f v {-} \nabla \vv \|_{L^2(\Omega)}^2 + \potential(\Se) 
+ \int_ \Omega ( \f v {-} \vv ) \otimes (\f v{-}\vv): \nabla \vv- \eta  \syv : \Se\;\ddd x \!\right ) \!\dt 
\\
&\quad\ \
+\int_0^T \phi \left ( \int_ \Omega \t \vv \cdot  \f v - \frac{1}{2}\t |\vv|^2  + (\vv\cdot \nabla )\vv \cdot \f v   +  \mu \nabla \vv  : (\nabla \f v{-}\nabla \vv)   \dx - \langle \f f, \f v {-}\vv  \rangle\right ) \dt \leq 0\,. 
\end{aligned}
\end{equation}
We now integrate by parts in time the second term in the last line of~\eqref{ineqequiv}, regroup terms, 
let $\vv = \alpha \tu$ for given $\tu\in\CRcisigma\np{\OmTzero}$ and $\alpha>0$, and multiply the inequality by $1/\alpha$ to infer
\begin{multline*}
	\int_0^T \phi' \left ( \f v , \tu\right ) \;\ddd t
+\frac{1}{\alpha}\left (- \frac{1}{2}\int_0^T \phi'\left ( \| \f v \|_{L^2(\Omega)}^2 + \| \Se \|_{L^2(\Omega)}^2\right) \dt - \frac{1}{2}\left ( \| \f v_0 \|_{L^2(\Omega)}^2 + \| \Se_0\|_{L^2(\Omega)}^2 \right )  
  \right ) 
\\
+ \frac{1}{\alpha} \int_0^T \phi\left ( \mu \| \nabla \f v \|_{L^2(\Omega)}^2 + \mathcal{P}(\Se) - \langle \f f , \f v\rangle \right )\dt 
\\
+ \int_0^T \phi\bigg(\int_\Omega \t \tu\f v - \mu \nabla \f v :\nabla \tu + (\f v \otimes \f v ) : \nabla \tu - \eta ( \nabla \tu)_{\sym}:\Se \;\ddd x + \langle \f f , \tu \rangle\bigg) \dt 
+ \left ( \f v _0 , \tu(0)\right ) \leq0 \,.
\end{multline*}
In the limit $\alpha\ra \infty$ the two terms with the prefactor $\frac{1}{\alpha}$ disappear. Thus,  appealing to Lemma~\ref{lem:invar} we deduce
\begin{align*}
\int_0^T \left(\int_\Omega \t \tu\f v - \mu \nabla \f v :\nabla \tu + (\f v \otimes \f v ) : \nabla \tu - \eta ( \nabla \tu)_{\sym}:\Se\;\ddd x  + \langle \f f , \tu \rangle\right)\dt + \left ( \f v _0 , \tu(0)\right ) \leq0 \,,
\end{align*}
where we used the fact that $\tu(\cdot,T)=0$.
Since the left-hand side of the last inequality is a linear functional of $\tu$ with the latter being allowed to vary in the linear space $\CRcisigma\np{\OmTzero}$, we infer the asserted equation~\eqref{eq:weaksol.v}.
\end{proof}

Another quite natural property is that
energy-variational solutions are monotonic in the type in the following manner.

\begin{proposition}[Monotonicity]\label{prop:type.monotonous}
If $(\vvel,\Stens)$ is an energy-variational solution of type $\mathcal K$, 
and if $\mathcal K(\tvvel,\tStens)\leq\mathcal L(\tvvel,\tStens)$ 
for all $(\tvvel,\tStens)\in D_{\mathcal L}\subset D_{\mathcal K}$ and a.a.~$t\in(0,T)$,
then $(\vvel,\Stens)$ is an energy-variational solution of type $\mathcal L$.
\end{proposition}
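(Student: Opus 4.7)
The plan hinges on the observation that the regularity weight $\mathcal{K}$ enters $\mathcal{W}^{(\mathcal{K})}_0$ only through the summand $\mathcal{K}(\tvvel,\tStens)\,\mathcal{R}(\vvel,\Stens|\tvvel,\tStens)$, and this contribution precisely compensates the derivative of the exponential prefactor $e^{\int_s^t \mathcal{K}\ddd\tau}$ in \eqref{relen.}. As a consequence, the content of the $\mathcal{K}$-type inequality can be recast in a form that is essentially independent of $\mathcal{K}$, and one can then re-install any other admissible weight by a reparametrisation of the test function used in Lemma~\ref{lem:invar}.

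First I fix an arbitrary test pair $(\tvvel,\tStens)\in D_\mathcal{L}\cap\Ybzero$ and note that by the assumption $D_\mathcal{L}\subset D_\mathcal{K}$ this pair is also admissible for the $\mathcal{K}$-inequality satisfied by $(\vvel,\Stens)$. Abbreviating, for any non-negative functional $\mathcal{H}$,
\[
g^{(\mathcal{H})}(t):=\mathcal{W}^{(\mathcal{H})}_0(\vvel,\Stens|\tvvel,\tStens)(t)+\mathcal{P}(\Stens(t))-\mathcal{P}(\tStens(t))+\left\langle \mathcal{A}_0(\tvvel,\tStens),\begin{pmatrix}\vvel-\tvvel\\ \Stens-\tStens\end{pmatrix}\right\rangle(t),
\]
the definition \eqref{W.0} directly gives the crucial pointwise identity
\[
g^{(\mathcal{L})}-g^{(\mathcal{K})}=\bigl(\mathcal{L}(\tvvel,\tStens)-\mathcal{K}(\tvvel,\tStens)\bigr)\,\mathcal{R}(\vvel,\Stens|\tvvel,\tStens).
\]
Dividing \eqref{relen.} (written for type $\mathcal{K}$) by $e^{\int_0^t\mathcal{K}\ddd\sigma}$ and invoking Lemma~\ref{lem:invar}, the $\mathcal{K}$-inequality is equivalent to the statement that
\[
-\int_0^T\phi'(\tau)\,\mathcal{R}(\tau)\,e^{-\int_0^\tau\mathcal{K}\ddd\sigma}\,\ddd\tau+\int_0^T\phi(\tau)\,e^{-\int_0^\tau\mathcal{K}\ddd\sigma}\,g^{(\mathcal{K})}(\tau)\,\ddd\tau\leq\mathcal{R}(0)
\]
for every $\phi\in\tilde{W}((0,T))$.

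Given any $\psi\in\tilde{W}((0,T))$, the key substitution is $\phi(\tau):=\psi(\tau)\,e^{-\int_0^\tau(\mathcal{L}-\mathcal{K})\ddd\sigma}$; since $\mathcal{L}\geq\mathcal{K}\geq 0$ this $\phi$ lies in $\tilde{W}((0,T))$ (non-negative, $\phi(0)=1$, $\phi(T)=0$, and $\phi'\leq 0$ a.e.\ because $\psi'\leq 0$ and the auxiliary factor is non-increasing). Using the elementary identities $\phi\,e^{-\int\mathcal{K}}=\psi\,e^{-\int\mathcal{L}}$ and $\phi'\,e^{-\int\mathcal{K}}=(\psi'-(\mathcal{L}-\mathcal{K})\psi)\,e^{-\int\mathcal{L}}$ together with the algebraic identity for $g^{(\mathcal{L})}-g^{(\mathcal{K})}$ recalled above, the $\mathcal{K}$-inequality above transforms precisely into
\[
-\int_0^T\psi'(\tau)\,\mathcal{R}(\tau)\,e^{-\int_0^\tau\mathcal{L}\ddd\sigma}\,\ddd\tau+\int_0^T\psi(\tau)\,e^{-\int_0^\tau\mathcal{L}\ddd\sigma}\,g^{(\mathcal{L})}(\tau)\,\ddd\tau\leq\mathcal{R}(0),
\]
valid for every $\psi\in\tilde{W}((0,T))$. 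A second application of Lemma~\ref{lem:invar}, now in the opposite direction, followed by multiplication with $e^{\int_0^t\mathcal{L}\ddd\sigma}$, yields exactly the $\mathcal{L}$-type relative energy inequality \eqref{relen.} for the test pair $(\tvvel,\tStens)$. Since $(\tvvel,\tStens)\in D_\mathcal{L}\cap\Ybzero$ was arbitrary, this proves that $(\vvel,\Stens)$ is an energy-variational solution of type~$\mathcal{L}$.

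There is no genuinely difficult step; the only technical point that needs checking is the admissibility of the reparametrised $\phi$ in $\tilde{W}((0,T))$, which follows immediately from $\mathcal{L}\geq\mathcal{K}$. Conceptually, the argument makes precise the intuition that the Gronwall exponential in \eqref{relen.} is a purely technical device whose role is only to keep the integrand integrable and to enlarge the set of admissible test functions; the physical content of the inequality is insensitive to the particular choice of $\mathcal{K}$.
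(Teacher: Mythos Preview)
Your proof is correct and follows essentially the same approach as the paper's: pass to the weak form via Lemma~\ref{lem:invar}, substitute $\phi(s)=\psi(s)\,e^{-\int_0^s(\mathcal{L}-\mathcal{K})\,\ddd\tau}$ (admissible because $\mathcal{L}\ge\mathcal{K}$), and apply Lemma~\ref{lem:invar} again. You have simply spelled out the computations that the paper leaves implicit in a single sentence.
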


\begin{proof}
This can readily be seen by expressing \eqref{relen.} in a weak form with Lemma~\ref{lem:invar},
using the test function 
$\phi(s)=\varphi(s) \e^{-\int_0^s\np{\mathcal{L}(\tvvel,\tStens)-\mathcal{K}(\tvvel,\tStens)}\dtau}$
for $\varphi\in\tilde{W}((0,T))$,
and using Lemma \ref{lem:invar} again 
to return to a pointwise version of \eqref{relen.} with $\mathcal K$ replaced with $\mathcal L$.
\end{proof}

	\begin{remark}[Convex solution set]\label{rem:convex}
	By Proposition \ref{prop:type.monotonous},
	the energy-variational solution established in Theorem \ref{thm:envar} is 
	also an energy-variational solution of type $\mathcal{K}_s$ given by 
	\begin{align}
	\mathcal{K}_s(\vv, \tS) \coloneqq C \left ( \| \vv \| _{L^r(\Omega)}^s +\| \tS \|_{L^\infty(\Omega)}^2 + \| \nabla \tS \|_{L^3(\Omega)}^2 \right )\,\label{K2}
	\end{align}
	for $s\in(2,\infty)$ and $r\in(3,\infty)$ such that $2/s+3/r=1$.
	Similarly to~\eqref{hoelderest}, we observe that 
	$\mathcal{W}_0 (\f v , \Se|\vv,\tS) \geq 0$ 
	for all $(\f v, \Se)\in \mathfrak{X}_0 $ and all $(\vv,\tS) \in \mathfrak{Z}_0\cap D_{\mathcal K_s}$
	if $C>0$ is sufficiently large. 
	Since $\mathcal{W}_0^{(\mathcal{K}_s)} $ is quadratic in $(\f v,\Se)$ and non-negative, it is convex in $(\f v, \Se)$. 
	This implies that the solution set of energy-variational solutions of type $\mathcal{K}_s$ is convex. 
	Additionally, the solution set is weakly-$\ast$ closed (\textit{cf.}~Remark \ref{rem:weaklystarclosed})
	and bounded and therewith compact in the weak-$\ast$ topology of~$\mathfrak{X}_0$. 
	This convexity and compactness properties may be used to select an energy-variational solution with 
	maximal dissipation~\cite{envar}. 
	Such a selected maximally dissipative solution can not only be argued to be physically more reasonable since they minimize the energy along all energy-variational solutions, but they also turn out to be analytically favorable since the solution concept is well posed~(see~\cite{maxdiss,envar}). 
	\end{remark}

\subsection{Energy-variational solutions versus strong solutions}
In this subsection, we prove two results 
attempting to further justify the concept of energy-variational solutions.
 First, in Proposition~\ref{prop:weakstrongenvar} we show that any hypothetical strong solution is unique in the class of energy-variational solutions.
Second, in Proposition~\ref{prop:addreg} we prove that any energy-variational solution enjoying some additional regularity is a (unique) strong solution. 
Observe that both results only hold for strong solutions that belong to $D_{\mathcal K}$.
In this sense, the regularity weight $\mathcal K$ also determines the 
minimal regularity of a strong solution for comparison with an energy-variational solution.

\begin{definition}[Strong solution]\label{def:strong.0}
	We call $(\vvel,\Stens)$ 
	a strong solution of problem \eqref{eqvis.0} with initial data $(\vvel_0,\Se_0)$ 
	if $\np{\vvel,\Stens}\in\Ybzero$, $\np{\vvel(0),\Stens(0)}=\np{\vvel_0,\Stens_0}$ and 
	\begin{itemize}
		\item 
		equation~\eqref{eqvis1.0} is satisfied in the weak sense, 
		\textit{i.e.}, equation \eqref{eq:weaksol.v}
		 holds true
		for all 
		$\Phi\in\CRcisigma\np{\OmTzero}$.
		\item 
		inclusion~\eqref{eqvis2.0} is satisfied pointwise a.e.~in time, 
		\textit{i.e.}, 
		it holds
		\begin{equation}\label{eq:strongvar.0}
		-\bp{\t \Se + (\f v \cdot \nabla) \Se 
		+ \left ( \Se (\nabla \f v)_{\skw} -(\nabla \f v )_{\skw} \Se  \right ) 
		- \eta (\nabla \f v)_{\sym}}(t)
		\in\partial\potential(\Stens(t))\quad
\end{equation}
	for a.a.~$t\in (0,T)$.
	\end{itemize}
\end{definition}

We show next that 
if there exists an energy-variational solution of type $\mathcal K$ 
and a strong solution in the class $D_{\mathcal K}$,
both emanating from the same initial data,
then these solutions coincide.
This statement and its proof are parallel to the one given in Theorem \ref{thm:weakstrong} for $\gamma>0$.
In particular, we also derive a corresponding weak-strong stability estimate.

\begin{proposition}[Weak-strong uniqueness]\label{prop:weakstrongenvar}
Assume that $\mathcal K$ is a regularity weight
such that $\mathcal{W}_0=\mathcal{W}_0^{(\mathcal K)}$ defined in~\eqref{W.0} is non-negative,
\textit{i.e.}, $\mathcal{W}_0( \f v , \Se|\vv,\tS) \geq 0 $ for all $( \f v , \Se) \in \mathfrak{X}_0$ and $(\vv,\tS)\in \mathfrak{Z}_0$. 
Let $(\f v , \Se)\in \mathfrak{X}_0$ be an energy-variational solution of type $\mathcal{K}$
with initial data $(\f v_0 , \Se_0)$,
and let $( \vv , \tS) \in \mathfrak{Z}_0 \cap D_{\mathcal K}$
be a strong solution in the sense of Def.~\ref{def:strong.0}
with initial data $( \vv_0 , \tS_0)$.
Then the inequality
\begin{equation}\label{est:stabilityenvar}
	\mathcal{R} (\f v(t) , \Se(t)| \vv(t),\tS(t))
	\leq \mathcal{R} (\f v_0 , \Se_0| \vv_0,\tS_0)  \e^{\int_0^t \mathcal{K}(\vv,\tS) \de s } \,
\end{equation}
holds.
Especially, if the initial conditions coincide, \textit{i.e.,} $ (\f v_0 , \Se_0 ) = ( \vv_0, \tS_0)$, then every energy-variational solution of type $\mathcal{K}$ coincides with the (hypothetical) strong solution~$(\vv,\tS)$. 
\end{proposition}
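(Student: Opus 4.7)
The plan is to insert the strong solution $(\vv,\tS)$ itself as the test function in the relative energy inequality~\eqref{relen.} satisfied by the energy-variational solution $(\f v,\Se)$. Since by hypothesis $(\vv,\tS)\in \mathfrak{Z}_0\cap D_{\mathcal K}$ and $(\vv(0),\tS(0))=(\vv_0,\tS_0)$ by definition of a strong solution, this choice is admissible and yields
\[
\mathcal{R}(\f v(t),\Se(t)|\vv(t),\tS(t)) + \int_0^t \mathcal{I}(s)\,e^{\int_s^t\mathcal{K}(\vv,\tS)\de\tau}\de s \;\le\; \mathcal{R}(\f v_0,\Se_0|\vv_0,\tS_0)\,e^{\int_0^t\mathcal{K}(\vv,\tS)\de s},
\]
where $\mathcal{I} := \mathcal{W}_0(\f v,\Se|\vv,\tS) + \mathcal{P}(\Se)-\mathcal{P}(\tS) + \langle\mathcal{A}_0(\vv,\tS),(\f v-\vv,\Se-\tS)^\transpose\rangle$. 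The bound~\eqref{est:stabilityenvar} follows as soon as $\mathcal{I}(s)\geq 0$ for a.e.\ $s\in(0,t)$, and this reduction is the heart of the argument.

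For the Navier--Stokes component of $\mathcal{A}_0$, I plan to exploit that, by definition, a strong solution $\vv$ satisfies the weak formulation~\eqref{eq:weaksol.v} of the momentum equation and enjoys Serrin-type regularity $\vv\in\YT^s$ for some $s>2$. A standard density argument, entirely analogous to the one used inside the proof of Theorem~\ref{thm:weakstrong}, then promotes \eqref{eq:weaksol.v} to the pointwise-in-time identity $\langle\mathcal{A}^{(1)}(\vv(s),\tS(s)),\psi\rangle=0$ for every $\psi\in H^1_{0,\sigma}(\Omega)$ and a.e.\ $s$. Choosing $\psi=\f v(s)-\vv(s)$, which is admissible since $\f v\in L^2_{\mathrm{loc}}([0,T);H^1_{0,\sigma}(\Omega))$, kills the $\mathcal{A}^{(1)}$ contribution to $\mathcal{I}$. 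For the tensorial component, the strong-solution inclusion~\eqref{eq:strongvar.0} precisely states that
\[
\mathbb{G}(s):= -\bp{\t\tS + (\vv\cdot\nabla)\tS + \tS(\nabla\vv)_{\skw} - (\nabla\vv)_{\skw}\tS - \eta(\nabla\vv)_{\sym}}(s)\in \partial\mathcal{P}(\tS(s))
\]
inside $\LRdev{2}(\Omega)$ for a.e.\ $s$. Applying the very definition of the convex subdifferential on $\LRdev{2}(\Omega)$ to the element $\Se(s)\in\LRdev{2}(\Omega)$ yields $\mathcal{P}(\Se(s))-\mathcal{P}(\tS(s))\geq \int_\Omega \mathbb{G}(s):(\Se(s)-\tS(s))\de x$; since the extra regularity $\t\tS\in\LRdev{2}(\Omega)$ built into~\eqref{eq:strongvar.0} lets the duality pairing in $\mathcal{A}_0^{(2)}$ be rewritten as an $L^2$ integral, the right-hand side equals $-\langle\mathcal{A}_0^{(2)}(\vv,\tS),\Se-\tS\rangle$, and we obtain $\mathcal{P}(\Se)-\mathcal{P}(\tS)+\langle\mathcal{A}_0^{(2)}(\vv,\tS),\Se-\tS\rangle\geq 0$ a.e.

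Together with the standing hypothesis $\mathcal{W}_0\geq 0$ on $\mathfrak{X}_0\times\mathfrak{Z}_0$, these two bounds give $\mathcal{I}\geq 0$ a.e., which implies~\eqref{est:stabilityenvar}. If additionally $(\f v_0,\Se_0)=(\vv_0,\tS_0)$, then $\mathcal{R}(\f v(t),\Se(t)|\vv(t),\tS(t))\equiv 0$ forces $\f v\equiv\vv$ and $\Se\equiv\tS$. The only genuinely technical point I anticipate is the density step needed to test the weak Navier--Stokes equation for $\vv$ against $\f v-\vv$; this is routine under the Serrin condition satisfied by strong solutions and has essentially already been dispatched in the proof of Theorem~\ref{thm:weakstrong} for the case $\gamma>0$, so no new idea is required here.
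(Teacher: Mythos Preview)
Your proposal is correct and follows essentially the same approach as the paper: insert the strong solution $(\vv,\tS)$ as a test function in~\eqref{relen.}, use $\mathcal{W}_0\ge 0$ together with the two relations $\langle\mathcal{A}^{(1)}(\vv,\tS),\f v-\vv\rangle=0$ and $\mathcal{P}(\Se)-\mathcal{P}(\tS)+\langle\mathcal{A}_0^{(2)}(\vv,\tS),\Se-\tS\rangle\ge 0$ (the latter from the subdifferential inclusion~\eqref{eq:strongvar.0}) to drop the integral term, and conclude~\eqref{est:stabilityenvar}. The paper's proof is more terse, simply citing the analogous step in Theorem~\ref{thm:weakstrong}, whereas you spell out the density/regularity considerations behind the $\mathcal{A}^{(1)}$ identity---note, incidentally, that since $(\vv,\tS)\in\Ybzero$ already entails $\t\vv\in (\HSRNsigma{1}(\Omega))^*$, this step is even more immediate here than in the $\gamma>0$ case.
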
	

	\begin{proof}
	From the assumption that $\mathcal{W}_0$ is non-negative, the inequality~\eqref{relen.} also holds without this term. Similarly to the proof of Theorem~\ref{thm:weakstrong}, we observe that 
	\begin{equation*}
		\begin{aligned}
		\big\langle \mathcal{A}^{(1)}(\vv,\tS),	\f v - \vv \big\rangle &= 0,
		\\\mathcal{P}(\Stens)-\mathcal{P}(\tS) +\big\langle \mathcal{A}^{(2)}_0(\vv,\tS),	\Se-\tS \big\rangle &\ge 0
		\end{aligned}
	\end{equation*}
	for a.e.~$t\in(0,T)$. Hence, all terms in the second line of~\eqref{relen.} may be estimated from below by zero, which implies~\eqref{est:stabilityenvar}. 
	\end{proof}
	
\begin{corollary}
The energy-variational solution from Theorem~\ref{thm:envar} 
also fulfills the weak-strong uniqueneness property,
that is,
if there exists a strong solution in the sense of Def.~\ref{def:strong.0}
it coincides with the energy-variational solution from Theorem~\ref{thm:envar}.
\end{corollary}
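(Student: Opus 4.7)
The plan is to reduce the corollary to \PropEnIneq{}~\ref{prop:weakstrongenvar} by upgrading the type of the energy-variational solution from Theorem~\ref{thm:envar} to a regularity weight that is compatible with the strong solution and under which the relative-dissipation functional $\mathcal{W}_0$ is non-negative.

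More precisely, I would proceed as follows. The energy-variational solution $(\f v,\Se)$ constructed in Theorem~\ref{thm:envar} is of type $\mathcal{K}$ given by~\eqref{KStilde}, which depends only on~$\tS$. Let $(\vv,\tS)\in\Ybzero$ be a strong solution in the sense of Def.~\ref{def:strong.0}, emanating from the same initial datum $(\f v_0,\Se_0)$. By the definition of $\Ybzero$, there exists $s\in(2,\infty)$ with conjugate Serrin exponent $r$ satisfying $2/s+3/r=1$ such that $\vv\in L^s_{\mathrm{loc}}([0,T);L^r(\Omega)^3)$, and moreover $\tS\in L^2_{\mathrm{loc}}([0,T);W^{1,3}(\Omega)\cap L^\infty(\Omega))$ by the definition of $Z_{T,0}$. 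Hence the weight $\mathcal{K}_s$ from~\eqref{K2} satisfies $\mathcal{K}_s(\vv,\tS)\in L^1_{\mathrm{loc}}([0,T))$, so that $(\vv,\tS)\in D_{\mathcal{K}_s}\cap\Ybzero$.

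Next, observe that $\mathcal{K}(\tS)\le \mathcal{K}_s(\vv,\tS)$ for all admissible $(\vv,\tS)$ whenever the constant in~\eqref{K2} is chosen no smaller than the one in~\eqref{KStilde}; in particular $D_{\mathcal{K}_s}\subset D_{\mathcal{K}}$. Thus by the monotonicity property (Proposition~\ref{prop:type.monotonous}), the energy-variational solution $(\f v,\Se)$ is also of type $\mathcal{K}_s$. By Remark~\ref{rem:convex}, for a sufficiently large constant $C$ in~\eqref{K2} the relative-dissipation functional $\mathcal{W}_0^{(\mathcal{K}_s)}$ is non-negative on $\Xbzero\times(\Ybzero\cap D_{\mathcal{K}_s})$, so in particular $\mathcal{W}_0^{(\mathcal{K}_s)}(\f v,\Se|\vv,\tS)\ge 0$ along the specific strong solution.

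Finally, I would apply Proposition~\ref{prop:weakstrongenvar} with regularity weight $\mathcal{K}_s$, which yields
\[
\mathcal{R}(\f v(t),\Se(t)\,|\,\vv(t),\tS(t))\le \mathcal{R}(\f v_0,\Se_0\,|\,\vv_0,\tS_0)\,\e^{\int_0^t\mathcal{K}_s(\vv,\tS)\de s}=0,
\]
since the initial data coincide. Hence $(\f v,\Se)\equiv(\vv,\tS)$. The main (very mild) obstacle is purely bookkeeping: verifying that the constant in~\eqref{K2} may be enlarged so that simultaneously $\mathcal{K}\le \mathcal{K}_s$ and $\mathcal{W}_0^{(\mathcal{K}_s)}\ge 0$; both requirements are satisfied by choosing $C$ large enough, and neither depends on the particular solutions involved.
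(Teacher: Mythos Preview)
Your argument is correct and follows essentially the same route as the paper: upgrade the type of the energy-variational solution from $\mathcal{K}$ in~\eqref{KStilde} to $\mathcal{K}_s$ in~\eqref{K2} via Proposition~\ref{prop:type.monotonous}, verify that any strong solution in the sense of Def.~\ref{def:strong.0} lies in $D_{\mathcal{K}_s}$, and then invoke Proposition~\ref{prop:weakstrongenvar} using the non-negativity of $\mathcal{W}_0^{(\mathcal{K}_s)}$ noted in Remark~\ref{rem:convex}. Your write-up is in fact more explicit than the paper's about why the strong solution belongs to $D_{\mathcal{K}_s}$ and about the compatibility of the constants, but the logical skeleton is identical.
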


\begin{proof}
According to Proposition~\ref{prop:type.monotonous}, an energy-variational solution of type $\mathcal{K}$ given in~\eqref{KStilde} is also an energy-variational solution solution of type $\mathcal{K}_s$ given in~\eqref{K2},
which fulfills the assumptions of Proposition~\ref{prop:weakstrongenvar}.
This ensures that the weak-strong uniqueness also holds for the smaller set of energy-variational solutions of Theorem~\ref{thm:envar},
as long as the strong solution belongs to $D_{\mathcal K_s}$ for some $s\in(2,\infty)$.
Clearly, this is the case for all strong solutions in the sense of Def.~\ref{def:strong.0}.
\end{proof}
 
To infer that sufficiently regular energy-variational solutions
are already strong solutions, 
we assume that $\potential$ is given in integral form
$\potential(\Se)=\int_\Om\mathfrak{P}(\Se)\,\ddd x$.
This leads to approximation properties 
that are sufficient to 
show that energy-variational solutions
with more regularity, that is, belonging to $D_{\mathcal K}\cap\Ybzero$,
are already strong solutions.
For simplicity, we confine ourselves to energy-variational solutions of type $\mathcal K_s$
given in \eqref{K2}. Note that, by the monotonicity property in Proposition~\ref{prop:type.monotonous}, this includes the (smaller) set of solutions of type $\mathcal{K}(\tS)=C\bp{\|\tS\|_{L^\infty(\Omega)}^2+\|\nabla \tS\|_{L^3(\Omega)}^2}\le \mathcal K_s(\tvvel,\tS)$, whose existence follows from Theorem~\ref{thm:envar}.

\begin{proposition}[Regular energy-variational solutions]\label{prop:addreg}
	Let $(\f v, \Se)\in \Xbzero$ be an energy-vari\-a\-tion\-al solution of type $\mathcal{K}$ according to Definition~\ref{def:envar} with $\mathcal K= \mathcal{K}_s$ given in~\eqref{K2},
	which satisfies $(\f v,\Se)\in D_\mathcal{K}\cap\Ybzero$. 
	Further suppose that the associated initial data $(\vvel_0,\Se_0)$ satisfy $\potential(\Se_0)<\infty$,
	and that $\potential$ is in integral form, that is,
	 $\potential(\Se)=\int_\Om\mathfrak{P}(\Se)\,\ddd x$
	for a lower semicontinuous, convex function $\mathfrak{P}:\R^{3\times 3}_{\delta}\to[0,\infty]$ with $\mathfrak{P}(\zerotens)=0$.
	Then $(\f v ,\Se)$ is a strong solution in the sense of Definition~\ref{def:strong.0} taking the same initial data $(\vvel_0,\Se_0)$.
\end{proposition}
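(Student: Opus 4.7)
The plan is to verify the two defining conditions of Def.~\ref{def:strong.0} by inserting small perturbations of $(\vvel,\Stens)$ itself as competitors $(\vv,\tS)$ in the relative energy inequality~\eqref{relen.}, exploiting the hypothesis $(\vvel,\Stens)\in D_{\mathcal K}\cap\Ybzero$ with $\mathcal K=\mathcal K_s$ to ensure admissibility.

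To obtain the weak Navier--Stokes identity \eqref{eq:weaksol.v}, I would insert $(\vv,\tS)=(\vvel+\varepsilon\tu,\Stens)$ with $\tu\in\CRcisigma(\OmTzero)$ and $\varepsilon\in\R$ of either sign. Since the tensor component is unchanged, $\mathcal P(\Stens)-\mathcal P(\tS)=0$ and every term in $\mathcal W_0$ involving $\Stens-\tS$ vanishes. A direct Taylor expansion shows that $\mathcal R$ and the remaining nonlinear contributions to $\mathcal W_0$ are $O(\varepsilon^2)$, while $\langle\mathcal A_0^{(1)}(\vvel+\varepsilon\tu,\Stens),-\varepsilon\tu\rangle$ produces exactly $-\varepsilon$ times the Navier--Stokes duality pairing with $\tu$. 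Dividing by $\varepsilon$ and letting $\varepsilon\to 0^+$ and $\varepsilon\to 0^-$ in turn yields two opposite inequalities whose conjunction is precisely \eqref{eq:weaksol.v}. Admissibility $(\vvel\pm\varepsilon\tu,\Stens)\in\Ybzero\cap D_{\mathcal K_s}$ is immediate from the assumed regularity of $(\vvel,\Stens)$ and smoothness of $\tu$.

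To derive the differential inclusion \eqref{eq:strongvar.0}, I would insert the convex combination $(\vv,\tS)=(\vvel,(1-\delta)\Stens+\delta\hat{\mathbb T})$ with $\delta\in(0,1]$ and $\hat{\mathbb T}\in Z_{T,0}$ chosen so that $\mathcal P(\hat{\mathbb T})\in L^1_\loc([0,T))$. Since $\vvel-\vv\equiv 0$, all convective and Zaremba--Jaumann contributions to $\mathcal W_0$ vanish; convexity of $\mathcal P$ gives $\mathcal P(\Stens)-\mathcal P((1-\delta)\Stens+\delta\hat{\mathbb T})\geq -\delta(\mathcal P(\hat{\mathbb T})-\mathcal P(\Stens))$, and the $\mathcal A_0^{(2)}$-contribution is linear in $\delta$ up to $O(\delta^2)$. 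Dividing by $\delta$ and letting $\delta\to 0^+$, using continuity of $\mathcal A_0^{(2)}(\vvel,\cdot)$ in its second argument, yields
\[
\int_0^t\Bp{\mathcal P(\hat{\mathbb T})-\mathcal P(\Stens)+\langle\mathcal A_0^{(2)}(\vvel,\Stens),\hat{\mathbb T}-\Stens\rangle}\,\e^{\int_s^t\mathcal K\,\de\tau}\,\ds\geq 0.
\]
Substituting $\hat{\mathbb T}(s,x)=\Stens(s,x)+\psi(s)(\mathbb T_0(x)-\Stens(s,x))$ with $\psi\in\CRci((0,t))$ non-negative and $\mathbb T_0\in L^\infty(\Omega)\cap W^{1,3}(\Omega)$ deviatoric-symmetric and satisfying $\mathcal P(\mathbb T_0)<\infty$, and invoking the convexity bound $\mathcal P(\hat{\mathbb T})-\mathcal P(\Stens)\leq\psi(\mathcal P(\mathbb T_0)-\mathcal P(\Stens))$ to enlarge the integrand, one obtains $\int_0^t\psi[\mathcal P(\mathbb T_0)-\mathcal P(\Stens)+\int_\Omega G:(\mathbb T_0-\Stens)\dx]\,\e^{\int_s^t\mathcal K\,\de\tau}\,\ds\geq 0$, where $G:=\partial_t\Stens+(\vvel\cdot\nabla)\Stens+\Stens(\nabla\vvel)_{\skw}-(\nabla\vvel)_{\skw}\Stens-\eta(\nabla\vvel)_{\sym}$. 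Arbitrariness of $\psi\geq 0$ extracts the subgradient inequality pointwise a.e.~in time.

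The main obstacle is to upgrade this inequality---obtained a priori only for $\mathbb T_0\in L^\infty\cap W^{1,3}$---to all competitors $\mathbb T\in\LRdev{2}(\Omega)$, as required by the $L^2$-subdifferential definition. The assumed integral form $\mathcal P(\Stens)=\int_\Omega\mathfrak P(\Stens)\dx$ is essential here: for any $\mathbb T\in\LRdev{2}(\Omega)$ with $\mathcal P(\mathbb T)<\infty$, mollification combined with Jensen's inequality produces a recovery sequence $\mathbb T_n\in L^\infty\cap W^{1,3}$ with $\mathbb T_n\to\mathbb T$ in $L^2$ and $\mathcal P(\mathbb T_n)\to\mathcal P(\mathbb T)$, which suffices to pass the subgradient inequality to the limit. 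Finally, the attainment of the initial data $\vvel(0)=\vvel_0$ and $\Stens(0)=\Stens_0$ is obtained by inserting an admissible test pair $(\vv,\tS)$ with $\vv(0)=\vvel_0$ and $\tS(0)=\Stens_0$ into~\eqref{relen.} and sending $t\to 0^+$, exploiting the continuities $\vvel\in C([0,T);\Ha)$ and $\Stens\in C([0,T);\LRdev{2}(\Omega))$ inherited from $(\vvel,\Stens)\in\Ybzero$.
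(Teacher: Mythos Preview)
Your overall strategy matches the paper's: perturb $(\vvel,\Stens)$ by a small parameter in the admissible test-function slot of~\eqref{relen.}, divide by that parameter, and take the limit. The paper perturbs both components simultaneously via $(\vv,\tS)=(\vvel+\alpha\f u,\Stens+\alpha\mathbb T)$, while you decouple the two steps; this is a harmless stylistic difference. The density argument via mollification and Jensen's inequality for the integral functional $\potential$ is also the paper's mechanism for upgrading the subdifferential inequality to all of $\LRdev{2}(\Omega)$.

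There is, however, a genuine ordering gap. Your Navier--Stokes and subdifferential derivations tacitly require $(\vvel(0),\Stens(0))=(\vvel_0,\Stens_0)$: with $(\vv,\tS)=(\vvel+\varepsilon\tu,\Stens)$ the right-hand side of~\eqref{relen.} is
\[
\mathcal R(\vvel_0,\Stens_0\mid\vvel(0)+\varepsilon\tu(0),\Stens(0))\,\e^{\int_0^t\mathcal K_s(\vvel+\varepsilon\tu,\Stens)\de s},
\]
which is \emph{not} $O(\varepsilon^2)$ unless the initial data are already attained; even the $\varepsilon$-dependence of the exponential weight produces an $O(\varepsilon)$ contribution proportional to $\mathcal R(\vvel_0,\Stens_0\mid\vvel(0),\Stens(0))$. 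Hence the initial-data step must come first.

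Your sketch of that step---``insert an admissible test pair $(\vv,\tS)$ with $\vv(0)=\vvel_0$, $\tS(0)=\Stens_0$''---does not work as stated: for generic data $(\vvel_0,\Stens_0)\in\LRsigma{2}(\Omega)\times\LRdev{2}(\Omega)$ no such pair lies in $\Ybzero$ (constant-in-time extensions fail the $\HSRNsigma{1}$ resp.\ $W^{1,3}\cap L^\infty$ requirement), and the choice $(\vv,\tS)=(\vvel,\Stens)$ only returns the trivial inequality $0\le\mathcal R(\vvel_0,\Stens_0\mid\vvel(0),\Stens(0))\,\e^{\int}$. The paper fixes this by approximating $(\vvel_0,\Stens_0)$ with smooth $(\vvel_{0,\varepsilon},\Stens_{0,\varepsilon})$ satisfying $\potential(\Stens_{0,\varepsilon})\to\potential(\Stens_0)$---precisely the mollification-plus-Jensen argument you already invoke for the subdifferential density step---uses the constant-in-time pair $(\vvel_{0,\varepsilon},\Stens_{0,\varepsilon})$ as test function, sends $t\to0$, and then $\varepsilon\to0$. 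Once you transplant that argument to the initial-data step and move it to the beginning, your proof is complete.
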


\begin{proof}
First, we show that the initial data are attained.
To this end, we choose a sequence of regular functions $\{(\vvel_{0,\ve},\Se_{0,\ve})\}\subset \CRcisigma\np{\Omega}\times C^\infty(\overline\Om;\R^{3\times 3}_{\delta})$ converging to  
$(\vvel_{0},\Se_{0})$ in $\LRsigma{2}(\Om)\times\LRdev{2}(\Om)$ 
in such a way that $\potential(\Se_{0,\ve})\to\potential(\Se_0)$. A sequence $\{\Se_{0,\ve}\}$ with these properties can be obtained by mollification of the function $\Se_0$ (extended by zero on $\mathbb{R}^3\setminus\Om$). 
Indeed, note that for a standard mollifying kernel $(\rho_\varepsilon)_{\ve\in(0,1)}\subset C^\infty_0(\mathbb{R}^3)$ with $\rho_\varepsilon\ge0$ and $\int_{\mathbb{R}^3}\rho_\varepsilon=1$,  Jensen's inequality ensures that the function $\Stens_{0,\varepsilon}$ defined by $\Stens_{0,\varepsilon}(x)\coloneqq\rho_\varepsilon\ast\Se_0(x):=\int_{\mathbb{R}^3}\rho_\varepsilon(y)\Se_0(x-y)\,\ddd y$ satisfies
the pointwise inequality
\[
	\mathfrak{P}(\rho_\varepsilon\ast\Se_0)\le \rho_\varepsilon\ast\mathfrak{P}(\Se_0)\qquad\text{in }\Om,
\]
and hence
\[
\limsup_{\varepsilon\to0}\potential(\Stens_{0,\varepsilon})
\le \limsup_{\varepsilon\to0}
\int_\Om\rho_\varepsilon\ast\mathfrak{P}(\Se_0)\,\ddd x
=\int_\Om\mathfrak{P}(\Se_0)\,\ddd x
=\potential(\Stens_0).
\]
Since $\Stens_{0,\varepsilon}\to\Stens_0$ in $\LRdev{2}(\Omega)$
and $\potential$ is lower semicontinuous,
this even implies that $\potential(\Stens_{0,\varepsilon})\to\potential(\Stens_0)$ as $\varepsilon\to0$.
\EEE
Choosing $(\tvvel,\tS)$ constant in time and equal to $(\vvel_{0,\ve},\Se_{0,\ve})$ yields test functions admissible in the relative energy inequality \eqref{relen.}. It now suffices to pass to the limit $t\to0$ in this inequality along an admissible sequence of times, which gives (thanks to $\Ybzero\subset C([0,T);\LRsigma{2}(\Om)\times\LRdev{2}(\Om))$) the estimate
\begin{equation}
	\mathcal R(\vvel(0),\Stens(0)|\vvel_{0,\ve},\Se_{0,\ve})
	\leq \mathcal R(\vvel_0,\Stens_0|\vvel_{0,\ve},\Se_{0,\ve}).
\end{equation}
Taking the limit $\ve\to0$, we infer that $\mathcal R(\vvel(0),\Stens(0)|\vvel_{0},\Se_{0})=0$,
whence $\vvel(0)=\vvel_0$ and $\Stens(0)=\Stens_0$.

In the next step we show the weak formulation \eqref{eq:weaksol.v}.
Due to $\vvel(0)=\vvel_0$, this is equivalent to showing the identity
\begin{equation}\label{strongnavier}
\int_0^T\left [ \langle \t \f v, \Phi \rangle  + \int_\Omega ( \f v \cdot \nabla) \f v  \cdot \Phi +( \eta \Se + \mu \nabla \f v ) : \nabla \Phi \dx\right ] \dt = \int_0^T \langle \f f , \Phi \rangle \dt 
\end{equation}
for all $\Phi\in\CRcisigma\np{\OmTzero}$.
Since $(\f v,\Se)\in D_\mathcal{K}\cap\Ybzero$, we can use 
$(\vv,\tS) = (\f v + \alpha \f u, \Se + \alpha \mathbb{T})$ with $\alpha \in (0,1)$ and $(\f u , \mathbb{T}) \in D_\mathcal{K}\cap\Ybzero$
as a test function in \eqref{relen.}. 
Due to the convexity of $\mathcal{P}$, for $\alpha \in (0,1)$ we may estimate 
 \begin{align}\label{convin}
 \begin{split}
 \mathcal{P}(\Se) -  \mathcal{P}( \Se + \alpha \mathbb{T} ) ={}& \mathcal{P}(\Se) - \mathcal{P}\left ((1-\alpha) \Se  + \alpha ( \Se + \mathbb{T})\right ) 
\\
\geq {}&  \mathcal{P}(\Se) - (1-\alpha) \mathcal{P} (\Se ) - \alpha \mathcal{P} ( \Se + \mathbb{T}) \\
={}& \alpha \left (  \mathcal{P} ( \Se )  -\mathcal{P} (\Se +  \mathbb{T})\right ) \,.
 \end{split}
 \end{align}
Inserting this into~\eqref{relen.}, multiplying the resulting relation by $1/\alpha $, and sorting the different terms according to the appearing exponent of $\alpha$, we end up  with
\begin{align}
\alpha R( \f v , \Se , \f u , \mathbb{T}, \alpha ) + \int_0^t\left [ \mathcal{P} ( \Se )  -\mathcal{P} (\Se +  \mathbb{T}) 
- \left \langle \mathcal{A}_0(\f v ,\Se) , \begin{pmatrix}
\f u \\ \mathbb{T}
\end{pmatrix}\right \rangle\right ] \e^{\int_s^t \mathcal{K}(\f v +\alpha \f u , \Se +\alpha \mathbb{T})\de \tau} \de s \leq 0 \,.\label{alphaineq}
\end{align}
The remainder term $R$ contains all terms with prefactor $\alpha$ and is given by 
\begin{align*}
 R( \f v , \Se , \f u , \mathbb{T}, \alpha ):={}&  \frac{1}{2}\left ( \| \f u(t) \| _{L^2(\Omega)}^2 + \| \mathbb{T}(t) \|_{L^2(\Omega)}^2 \right ) + \int_0^t R_2( \f v , \Se , \f u , \mathbb{T}, \alpha ) \e^{\int_s^t\mathcal{K}(\f v + \alpha \f u , \Se+\alpha \mathbb{T}) \de \tau } \de s 
\\& - \frac{1}{2} \left ( \| \f u(0) \|_{L^2(\Omega)}^2 + \| \mathbb{T}(0)\|_{L^2(\Omega)}^2 \right ) \e^{\int_0^t \mathcal{K}(\f v + \alpha \f u , \Se+\alpha \mathbb{T})  \de s }\,,
\end{align*}
where $R_2 $ is given by
\begin{align*}
 R_2( \f v , \Se , \f u , \mathbb{T}, \alpha ):={}&  \mu \| \nabla \f u \|_{L^2(\Omega)}^2 - \int_\Omega (\f u \cdot \nabla)\f u \cdot (\f v + \alpha \f u)-\mathbb{T}\otimes \f u \dreidots \nabla ( \Se + \alpha \mathbb{T})  \dx 
 \\&-\int_\Omega \left ( \mathbb{T}(\nabla \f u )_{\skw} - (\nabla \f u) _{\skw} \mathbb{T}\right ) : ( \Se + \alpha \mathbb{T}) \dx 
 \\
& -
\int_\Omega  \left (\t \f u  + (\f u \cdot \nabla ) (\f v + \alpha \f u ) + (\f v \cdot \nabla) \f u 
\right )\cdot \f u   + \left ( \eta \mathbb{T} + 2 \mu (\nabla \f u)_{\sym} \right ) : \nabla \f u 
 \dx
 \\&-
 \int_\Omega\left ( \t \mathbb{T}+  (\f u \cdot \nabla ) (\Se + \alpha \mathbb{T} ) + (\f v \cdot \nabla) \mathbb{T} +  \mathbb{T}  (\nabla \f v )_{\skw} -  (\nabla \f v )_{\skw} \mathbb{T}  \right ) : \mathbb{T}\dx
\\&-
\int_\Omega\left ( (\Se + \alpha \mathbb{T} ) (\nabla \f u )_{\skw} -  (\nabla \f u )_{\skw} (\Se + \alpha \mathbb{T} )  - \eta (\nabla \f u )_{\sym} \right ) : \mathbb{T}\dx
\\ &+ \frac{1}{2}\mathcal{K} (\f v +\alpha \f u , \Se +\alpha \mathbb{T}) \left ( \| \f u \|_{L^2(\Omega)}^2 + \| \mathbb{T}\|_{L^2(\Omega)}^2 \right ) 
\,.
\end{align*}
Passing to the limit $\alpha \ra 0$ in~\eqref{alphaineq}, we infer from the boundedness of all terms in $R$ and the continuity property~$\lim _{\alpha \ra 0} \mathcal{K}(\f v + \alpha \f u , \Se + \alpha \mathbb{T}) = \mathcal{K}(\f v , \Se)$, which holds for $\mathcal{K}=\mathcal{K}_s$ given in~\eqref{K2},  that 
\begin{align}
\int_0^t\left [ \mathcal{P} ( \Se )  -\mathcal{P} (\Se +  \mathbb{T}) 
- \left \langle \mathcal{A}_0(\f v ,\Se) , \begin{pmatrix}
\f u \\ \mathbb{T}
\end{pmatrix}\right \rangle\right ] \e^{\int_s^t\mathcal{K}(\f v , \Se)\de \tau} \de s \leq 0 \,.\label{ineqreg}
\end{align}
Choosing  $\mathbb{T}=\zerotens$ and $ \f u =  \Phi \e^{-\int_\cdot^t \mathcal{K}(\f v , \Se)\de \tau}$ for $\Phi\in\CRcisigma\np{\OmTzero}$,
 we first infer the weak formulation~\eqref{strongnavier}  with an inequality.  Choosing $ \f u = - \Phi \e^{-\int_\cdot ^t \mathcal{K}(\f v , \Se)\de \tau}$ implies the converse inequality.
In summary, equation~\eqref{strongnavier} is fulfilled for all $\Phi\in\CRcisigma\np{\OmTzero}$. 

Reinserting this information into~\eqref{ineqreg}, we find 
\begin{equation}\label{insub}
\int_0^t \left [\mathcal{P}(\Se ) - \mathcal{P}(\Se + \mathbb{T}) 
+ \int_\Omega \mathbb{G}  : \mathbb{T}\dx
 \right ] 
\e^{\int_s^t\mathcal{K}(\f v  , \Se)\de \tau}  \de s \leq 0 \,
\end{equation}
with 
\[
 \mathbb{G} \coloneqq- \left (  \t \Se + ( \f v \cdot \nabla) \Se + \left ( \Se (\nabla \f v )_{\skw} - ( \nabla \f v )_{\skw} \Se \right ) - \eta (\nabla \f v )_{\sym} \right )\,,
\]
where the regularity of $(\f v , \Se)\in D_{\mathcal{K}}\cap  \Ybzero  $ guarantees that $\mathbb{G} \in L^1(0,t;\LRdev{2}(\Omega))$. 
By choosing $ \mathbb{T}= \tilde {\mathbb{T} }\e^{-\int_\cdot^t \mathcal{K}(\f v , \Se)\de \tau} $ 
for $(\zerovec,\tilde{\mathbb{T}})\in D_{\mathcal{K}}\cap  \Ybzero $
and employing inequality~\eqref{convin}, we infer from~\eqref{insub} that
\begin{equation*}
\int_0^t \bb{\mathcal{P}(\Se +  \tilde {\mathbb{T} }) -\mathcal{P}(\Se )}\,\ds 
\geq \int_0^t\int_\Omega \mathbb{G}  :  \tilde {\mathbb{T} }\dx
\de s  \,.
\end{equation*}
We now choose $\tTtens(x,t)=\varphi(t)(\hat\Ttens(x) - \Se(x,t))$ with 
$\varphi\in\CRci((0,T))$, $0\leq\varphi\leq 1$, and 
$\hat\Ttens\in \CRi(\overline\Om;\R^{3\times 3}_{\delta})$.
Then $(\zerovec,\tilde{\mathbb{T}})\in D_{\mathcal{K}}\cap  \Ybzero $,
and 
similarly to \eqref{convin}, we conclude
\[
\int_0^t \varphi\bb{\potential(\hat\Ttens)-\potential(\Stens)}\,\ds
\geq \int_0^t\int_\Omega\varphi \mathbb{G}:(\hat\Ttens- \Se )\,\dx\ds.
\]
By linearity, this inequality holds for all $\varphi\in\CRci((0,T))$ with $\varphi\geq 0$,
which implies that
\begin{equation}
\potential(\hat\Ttens)-\potential(\Stens(t))
\geq \int_\Omega\mathbb{G}:(\hat\Ttens - \Stens(t)) \,\dx
\label{pointwiseT}
\end{equation}
for a.a.~$t\in(0,T)$.
It is not difficult to see  that the null set where \eqref{pointwiseT} may not be valid can be chosen independently of 
$\hat\Ttens\in\CRi(\overline\Om;\R^{3\times 3}_{\delta})$.
Now fix $t\in(0,T)$ such that \eqref{pointwiseT} holds.
If $\hat\Ttens\in\LRdev{2}(\Omega)$ with $\potential(\hat\Ttens)=\infty$,
then \eqref{pointwiseT} is trivially satisfied. 
If $\hat\Ttens\in\LRdev{2}(\Omega)$ with $\potential(\hat\Ttens)<\infty$,
then we can use the same mollifier argument as above
to obtain a sequence $(\hat\Ttens_j)\subset\CRi(\overline\Om;\R^{3\times 3}_{\delta})$
such that $\hat\Ttens_j\to\hat\Ttens$ in $\LRdev{2}(\Omega)$
and $\potential(\hat\Ttens_j)\to\potential(\hat\Ttens)$ as $j\to\infty$.
For this sequence, \eqref{pointwiseT} is satisfied,
and a passage to the limit $j\to\infty$ shows that $\hat\Ttens$ also satisfies \eqref{pointwiseT}.
In conclusion, \eqref{pointwiseT} holds for all $\hat\Ttens\in\LRdev{2}(\Omega)$,
and identity \eqref{eq:strongvar.0} follows for a.a.~$t\in(0,T)$
by definition of the subdifferential $\partial\potential$.
\end{proof}

 \appendix
 
 \section{Appendix: Existence of generalized solutions}
 \label{sec:appendix}

The purpose of this section is to show Theorem~\ref{thm:gensol}, \textit{i.e.}, the existence of generalized solutions in the sense of Definition~\ref{def:gensol}.

\begin{proof}[Proof of Theorem~\ref{thm:gensol}] 
In \cite{EiterHopfMielke2021} existence of a solution to~\eqref{eqvis}
was established using a notion of generalized solution similar to Definition~\ref{def:gensol}, but with the space of test functions $\bigcup_{2<q<\infty}\ZT^q$ replaced with the smaller space 
\begin{align}
	\HSR{1}(0,t;\LR{2}_\delta(\Omega))
\cap \LR{2}(0,t;\HSR{1}(\Omega)^{3\times3})\cap 
\LR{5}(0,t;\LR{5}(\Om)^{3\times3})
\end{align}
for any $t\in(0,T)$, and inequality~\eqref{est:varin0T} replaced with its weaker form
\begin{equation}\label{est:varin0}
		\begin{aligned}
			\int_0^{t}\!\langle\pt\tStens,\Stens{-}\tStens\rangle
			+\potential(\Stens)-\potential(\tStens)\ds
			+\int_0^{t}\!\!\!\int_\Omega\gamma\grad\Stens:\grad(\Stens{-}\tStens)&\dx\ds 
			\\
			-\int_0^{t}\!\int_\Omega \bp{\vvel\cdot\grad\Stens
				+\Stens\skewgrad\vvel-\skewgrad\vvel\Stens}:\tStens 
			&-\eta\symmgrad\vvel:(\Stens-\tStens)\dx\ds\\
			&\qquad\quad\leq\tfrac{1}{2}\|\Stens_0-\tStens(0)\|_{L^2(\Omega)}^2.
		\end{aligned}
	\end{equation}
	However, going through the proof of \cite[\ThmMain]{EiterHopfMielke2021},
	one easily verifies that the constructed solution satisfies \eqref{est:varin0}
	even for all $\tStens$ in the larger class $\ZT^q$, $q\in(2,\infty)$. Regarding inequality~\eqref{est:varin0T}, we will show in Lemma~\ref{l:0=>0T} below that it can directly be derived from~\eqref{est:varin0}.

	It is necessary to mention that inequality~\eqref{est:enineq.v} for $\np{\vvel,\Stens}$ has not been stated explicitly  
	in \cite{EiterHopfMielke2021},
	but only for an approximating family $(\vvel_\varepsilon,\Stens_\varepsilon)$ 
	that satisfies
	\[
	\begin{aligned}
		\vvel_\varepsilon&\rightharpoonup\vvel && \tin \LR{2}(0,t;\HSR{1}(\Omega)^3),
		\\
		\vvel_\varepsilon&\overset{\ast}{\rightharpoonup}\vvel && \tin \LR{\infty}(0,t;\LRsigma{2}(\Omega)),
		\\
		\vvel_\varepsilon&\to\vvel && \tin \LR{2}(0,t;\LR{2}(\Omega)^3),
		\\
		\Stens_\varepsilon&\rightharpoonup\Stens && \tin \LR{2}(0,t;\HSR{1}(\Omega)^{3\times3})
	\end{aligned}
	\]
	as $\varepsilon\to0$.
	In particular, 
	the strong convergence statement implies $\norm{\vvel_\varepsilon(t)}_{L^2(\Omega)}\to\norm{\vvel(t)}_{L^2(\Omega)}$
	for almost every $t\in(0,T)$, at least for a suitable subsequence.
	This allows to pass to the limit $\varepsilon\to0$ in \eqref{est:enineq.v}
	(with $\np{\vvel,\Stens}$ replaced with $\np{\vvel_\varepsilon,\Stens_\varepsilon}$),
	where for the last term the identity
	\[
	\int_\Omega\eta\Stens:\grad\vvel \dx
	=-\int_\Omega\eta(\Div\Stens)\cdot\vvel \dx
	\]
	can be used. 
	In the end, $\np{\vvel,\Stens}$ also satisfies \eqref{est:enineq.v}.
	
	Finally, observe that in \cite{EiterHopfMielke2021}
	the boundary $\partial\Omega$ was assumed to be of class $\CR{1,1}$,
	which allowed for the construction of a suitable extension 
	of the boundary conditions.
	Since we consider homogeneous boundary conditions \eqref{eq:bdrycond} here,
	we can use the trivial extension, whence Lipschitz boundary is sufficient.
\end{proof}

The following lemma shows that inequality~\eqref{est:varin0}
implies the stronger inequality~\eqref{est:varin0T},
which also takes into account the values of $\Stens$ and $\tStens$ at time $t$.

\begin{lemma}\label{l:0=>0T}
	Let $q\in(2,\infty)$.
	If $(\vvel,\Stens)\in\Xb$ satisfies~\eqref{est:varin0} for all $t\in(0,T)$ and all $\tStens \in \ZT^q$, 
	then~\eqref{est:varin0T} holds true
	for almost all $t\in(0,T)$ and all $\tStens \in \ZT^q$.
\end{lemma}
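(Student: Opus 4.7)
The plan is to deduce \eqref{est:varin0T} from \eqref{est:varin0} by applying the latter with a modified test function obtained from $\tStens$ through linear interpolation towards a regularization $\Stens^\sigma(t)$ of $\Stens(t)$ on a shrinking interval $[t-h,t]$, and then passing to the limit $h\to0$ followed by $\sigma\to0$. Concretely, I would fix a common Lebesgue point $t\in(0,T)$ of $s\mapsto\Stens(s)\in\LRdev{2}(\Omega)$ and of each scalar $L^1$-integrand occurring in \eqref{est:varin0}, noting that $\potential(\Stens)\in\LRloc{1}([0,T))$ follows from choosing $\tStens\equiv\zerotens$ in \eqref{est:varin0}. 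For $\sigma>0$, select $\Stens^\sigma(t)\in\HSRdev{1}(\Omega)$ with $\Stens^\sigma(t)\to\Stens(t)$ in $\LRdev{2}(\Omega)$ and $\limsup_{\sigma\to0}\potential(\Stens^\sigma(t))<\infty$. For $h\in(0,t)$, define
\[
\tStens_{h,\sigma}(s)\coloneqq\begin{cases}\tStens(s), & s\in[0,t-h],\\ \tStens(s)+\theta_h(s)\bigl(\Stens^\sigma(t)-\tStens(s)\bigr), & s\in[t-h,t],\end{cases}\qquad\theta_h(s)\coloneqq\frac{s-(t-h)}{h},
\]
and extend by $\Stens^\sigma(t)$ for $s>t$; this lies in $\ZT^q$ and satisfies $\tStens_{h,\sigma}(0)=\tStens(0)$, so the right-hand side of \eqref{est:varin0} is unchanged.

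Splitting each integral in \eqref{est:varin0} at $s=t-h$, the contributions over $[0,t-h]$ converge as $h\to0$ to the corresponding integrals with $\tStens$ over $[0,t]$ by absolute continuity of the Lebesgue integral, reproducing all terms on the left-hand side of \eqref{est:varin0T} apart from the missing $\tfrac{1}{2}\|\Stens(t)-\tStens(t)\|_{L^2(\Omega)}^2$. Over $[t-h,t]$ the diffusive and transport integrands are integrals of fixed $L^1$-functions over a vanishing set (the $\Stens^\sigma(t)$-dependent parts being uniformly controlled in $h$ by $\|\Stens^\sigma(t)\|_{\HSR{1}(\Omega)}$ for fixed $\sigma$) and hence vanish. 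For the potential contribution, convexity of $\potential$ gives
\[
\int_{t-h}^t\!\potential(\tStens_{h,\sigma})\ds\leq\int_{t-h}^t\!\potential(\tStens)\ds+h\,\potential(\Stens^\sigma(t))\xrightarrow{h\to0}0,
\]
which combined with $\int_{t-h}^t\potential(\Stens)\ds\to 0$ also makes this contribution vanish. The decisive piece is $\int_{t-h}^t\langle\pt\tStens_{h,\sigma},\Stens-\tStens_{h,\sigma}\rangle\ds$: the derivative $\pt\tStens_{h,\sigma}=(1-\theta_h)\pt\tStens+h^{-1}\bigl(\Stens^\sigma(t)-\tStens(\cdot)\bigr)$ has a regular first part whose integrand lies in $L^1$ on a shrinking interval, while for the singular part, continuity of $\tStens\in\ZT^q\hookrightarrow C([0,T);\LRdev{2}(\Omega))$ together with the Lebesgue-point property of $\Stens(\cdot)$ yields $\frac{1}{h}\int_{t-h}^t\Stens(s)\ds\to\Stens(t)$ and $\frac{1}{h}\int_{t-h}^t\tStens_{h,\sigma}(s)\ds\to\tfrac{1}{2}\tStens(t)+\tfrac{1}{2}\Stens^\sigma(t)$, so that the limit of the integral equals
\[
\bigl(\Stens^\sigma(t)-\tStens(t),\;\Stens(t)-\tfrac{1}{2}(\tStens(t)+\Stens^\sigma(t))\bigr)_{L^2(\Omega)}\xrightarrow{\sigma\to0}\tfrac{1}{2}\|\Stens(t)-\tStens(t)\|_{L^2(\Omega)}^2,
\]
which is exactly the missing term.

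The main obstacle will be the construction of $\Stens^\sigma(t)\in\HSRdev{1}(\Omega)$ combining $H^1$-regularity with a $\sigma$-uniform upper bound on $\potential(\Stens^\sigma(t))$. For $\potential$ in integral form (cf.\ \eqref{Pexample}), spatial mollification together with Jensen's inequality works directly; for general convex lower semicontinuous $\potential$, one has to couple a Moreau--Yosida resolvent step (which lowers $\potential$ but only produces an $L^2$-element) with a spatial mollification (which provides $H^1$-regularity but may destroy the $\potential$-bound), coordinating the two parameters diagonally so that both the $L^2$-convergence and the $\potential$-bound survive in the limit.
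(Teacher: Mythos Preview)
Your overall strategy is the same as the paper's: you modify the test function $\tStens$ near time $t$ by interpolating towards a regularization of $\Stens(t)$, so that the time-derivative term produces the missing $\tfrac{1}{2}\|\Stens(t)-\tStens(t)\|_{L^2(\Omega)}^2$. The computation of this limit is correct. The paper, however, replaces your spatial regularization $\Stens^\sigma(t)$ by the temporal Steklov average
\[
\Stens_\kappa(s)=\kappa^{-1}\int_{s-\kappa}^s\Stens(\tau)\,\ddd\tau,
\]
combined with a smooth cutoff $\zeta_\delta$ in place of your linear interpolant $\theta_h$.

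The decisive advantage of time-averaging is that it dissolves precisely the obstacle you flag at the end. Since $\Stens\in L^2_{\mathrm{loc}}([0,T);H^1_\delta(\Omega))$, the average $\Stens_\kappa(s)$ automatically lies in $H^1_\delta(\Omega)$; and Jensen's inequality in time for the convex functional $\potential$, together with $\potential(\Stens)\in L^1_{\mathrm{loc}}([0,T))$, yields
\[
\potential(\Stens_\kappa(s))\le \kappa^{-1}\int_{s-\kappa}^s\potential(\Stens(\tau))\,\ddd\tau<\infty
\]
for \emph{any} convex lower semicontinuous $\potential$. Thus the $H^1$-regularity and the $\potential$-bound are obtained simultaneously, with no tension between them. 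By contrast, your proposed diagonal argument combining the Moreau--Yosida resolvent with spatial mollification is not substantiated: the resolvent lowers $\potential$ but stays in $L^2_\delta(\Omega)$, while mollification provides $H^1$-regularity but, for a general $\potential$ not in integral form, may send $\potential$ to $+\infty$; there is no evident mechanism forcing the two regularizations to cooperate. Your argument is therefore complete for integral potentials $\potential(\Stens)=\int_\Omega\mathfrak{P}(\Stens)\,\ddd x$, but for general $\potential$ the gap is real, and the paper's time-averaging trick is the cleaner way to close it.
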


\begin{proof}
	The assertion is obtained similarly as in the proof of~\cite[\PropEnIneq]{EiterHopfMielke2021}.
	We therefore only sketch the argument. 
	Extending $\Stens$ by zero for $t<0$, we define for $\kappa>0$
	\[
	\Stens_\kappa(s)=\kappa^{-1}\int_{s-\kappa}^s\Stens(\tau)\dtau.
	\]
	Fix $t\in(0,T)$ and let $\tStens_{\kappa,\delta}:=\np{1-\zeta_\delta}\tStens+\zeta_\delta \Stens_\kappa$, where $\zeta_\delta(s):=\zeta((s-t)/\delta)$
	for some non-decreasing function $\zeta\in \CRi(\mathbb{R};[0,1])$ satisfying
	$\zeta(s)=0$ for $s\le-1$ and $\zeta(s)=1$ for $s\ge0$,
	and $\tStens\in\ZT^q$ is arbitrary. 
	We observe that $\tStens_{\kappa,\delta}\in\ZT^q$ satisfies
	$\tStens_{\kappa,\delta}(0)=\tStens(0)$, $\tStens_{\kappa,\delta}(t)=\Stens_\kappa(t)$
	and, as $\kappa,\delta\downarrow0$, the sequence $\{\tStens_{\kappa,\delta}\}$ approximates $\tStens$ (in a suitable sense).
	To infer ineq.~\eqref{est:varin0T}, we insert $\tStens_{\kappa,\delta}$ as a test function in ineq.~\eqref{est:varin0}, take the limit $\delta\downarrow0$ and then send $\kappa\downarrow0$.
	Let us only point out how to pass to these limits in the terms involving a time derivative, since the remaining integrals can be handled as in~\cite[\PropEnIneq]{EiterHopfMielke2021}.
	We compute
\begin{multline*}		\int_0^{t}\!\langle\pt\tStens_{\kappa,\delta},\tStens_{\kappa,\delta}\rangle\ds
		=  \frac{1}{2}\norm{\Stens_\kappa(t)}_{L^2(\Omega)}^2 - \frac{1}{2}\norm{\tStens(0)}_{L^2(\Omega)}^2
		\\
		= \frac{1}{2}\norm{\Stens_\kappa(t)}_{L^2(\Omega)}^2 - \frac{1}{2}\norm{\tStens(t)}_{L^2(\Omega)}^2
		+\int_0^{t}\!\langle \pt\tStens,\tStens\rangle\ds
\end{multline*}	and
	\[
	\begin{aligned}
		\int_0^{t}\!\langle \pt\tStens_{\kappa,\delta},\Stens\rangle\ds
		= 
		&\int_0^{t}\!-\zeta_\delta'(s)\int_\Omega \tStens:\Stens\dx\ds 
		+\int_0^{t}\!\zeta_\delta'(s)\int_\Omega \Stens_\kappa:\Stens\dx\ds 
		\\
		&\quad+\int_{0}^{t} \!\np{1-\zeta_\delta(s)} \!
		\langle\pt \tStens,\Stens\rangle\ds
		+\int_{t-\delta}^{t} \!\zeta_\delta(s)\!\langle\pt \Stens_\kappa,\Stens\rangle\ds,
	\end{aligned}
	\]
	where the last term in the second equation vanishes as $\delta\to0$.
	Thus, combining these two identities
	and treating the remaining terms as in the proof of 
	\cite[\PropEnIneq]{EiterHopfMielke2021}, 
	we deduce for a.a.~$t\in(0,T)$ and all $\tStens\in\ZT^q$ that
	\[
	\begin{aligned}
		\lim_{\kappa\to0}&\;\lim_{\delta\to0}\;
		\int_0^{t}\!\langle
		\pt\tStens_{\kappa,\delta},\tStens_{\kappa,\delta}-\Stens\rangle\ds
		\\
		&= 
		\frac{1}{2} \norm{\Stens(t)}_{L^2(\Omega)}^2 
		- \frac{1}{2}\norm{\tStens(t)}_{L^2(\Omega)}^2
		+\int_0^{t}\!\langle\pt\tStens,\tStens\rangle\ds
		-\int_\Omega \!\bp{\Stens(t)-\tStens(t)}:\Stens(t)\dx
		-\int_0^{t}\!\langle\pt\tStens,\Stens\rangle\ds
		\\
		&=\int_0^{t}\!\langle\pt\tStens,\tStens-\Stens\rangle\ds
		-\frac{1}{2}\norm{\tStens(t)-\Stens(t)}_{L^2(\Omega)}^2.
	\end{aligned}
	\]
	Proceeding with the remaining terms as in 
	\cite[\PropEnIneq]{EiterHopfMielke2021}, we arrive at
	\eqref{est:varin0T}.
\end{proof}

\small
\bibliographystyle{abbrv}

\end{document}